\newtheorem{thm}{Theorem}[section]
\newtheorem{cor}[thm]{Corollary}
\newtheorem{lem}[thm]{Lemma}
\newtheorem{prop}[thm]{Proposition}
\theoremstyle{definition}	
\newtheorem{rem}[thm]{Remark}
\newtheorem{proof-of-csc}[thm]{Proof}
\def\goth{\mathfrak}
\def\a{\alpha}
\def\b{\beta}
\def\G{\Gamma}
\def\Oa{\mathcal{O}}
\def\FF{\mathbb{F}}
\def\GG{\mathbb{G}}
\def\QQ{\mathbb{Q}}
\def\W{\mathbb{W}}
\def\Z{\mathbb{Z}}
\def\Ext{\mathrm{Ext}}
\def\Gal{\mathrm{Gal}}
\def\Hom{\mathrm{Hom}}
\def\holim{\mathrm{holim}}
\def\lim{\mathrm{lim}}
\def\defeq{\overset{\mathrm{def}}=}
\def\ZZ{{{\Z}}}
\def\longr{{\longrightarrow}\ }
\def\Et{{{(E_2)}}}
\newcommand{\F}{\mathbb{F}}
\newcommand{\s}{\mathbb{S}}
\date{\today} 
\begin{document}

\title{The rational homotopy of the $K(2)$-local sphere 
and the chromatic splitting conjecture for the prime $3$ and 
level $2$} 

\begin{abstract}
We calculate the rational homotopy of the $K(2)$-local sphere 
$L_{K(2)}S^0$ at the prime $3$ and confirm Hopkins' chromatic splitting 
conjecture for $p=3$ and $n=2$.     
\end{abstract}
 
\author{Paul G. Goerss,
Hans-Werner Henn, and Mark Mahowald
}
\thanks{The first author was partially supported by the National Science 
Foundation (USA). The second author was partially supported by ANR ``HGRT''}

\address{Department of Mathematics, 
Northwestern University, Evanston, IL 60208, U.S.A.}  
\address{Institut de Recherche Math\'ematique Avanc\'ee,
C.N.R.S. - Universit\'e de Strasbourg, F-67084 Strasbourg,
France}
\address{Department of Mathematics, 
Northwestern University, Evanston, IL 60208, U.S.A.}  
\maketitle 

\section{Introduction}

Let $K(n)$ be the $n$-th Morava $K$-theory at a fixed prime $p$. 
The Adams-Novikov Spectral Sequence  for computing the
homotopy groups of the $K(n)$-local sphere $L_{K(n)}S^0$
can be identified by \cite{DH} with a descent spectral sequence  
\begin{equation}\label{descentss}
E_2^{s,t}\cong H^s(\GG_n,(E_n)_t)\Longrightarrow \pi_{t-s}(L_{K(n)}S^0) \ . 
\end{equation}
Here $\GG_n$ denotes the automorphism group of the pair
$(\FF_{p^n},\Gamma_n)$, where $\Gamma_n$ is the Honda formal 
group law;  the group $\GG_n$ is a profinite group and cohomology
is continuous cohomology. The spectrum $E_n$ is the  
$2$-periodic Landweber exact  ring spectrum so that the 
complete local ring $(E_n)_0$ classifies deformations
of $\Gamma_n$. 

In this paper we focus on the case $p=3$ and $n=2$. 
In \cite{GHMR}, we constructed  a resolution of the $K(2)$-local sphere
at the prime $3$ using homotopy fixed point spectra of the
form $E_2^{hF}$ where $F \subseteq \GG_2$ is a finite subgroup.
These fixed point spectra are well-understood. In particular, their
homotopy groups have all been calculated (see \cite{GHMR}) and they
are closely related to the Hopkins-Miller spectrum
of topological modular forms. 
The resolution was used in \cite{HKM} to redo and 
refine the earlier calculation of the homotopy of the 
$K(2)$-localization of the mod-$3$ Moore spectrum 
\cite{shi1}. In this paper we show how the results of \cite{HKM} imply 
the calculation of the rational homotopy of the $K(2)$-local sphere. 
Let $\QQ_p$ be the field of fractions of the
$p$-adic integers and $\Lambda$ the exterior algebra functor.
 
\begin{thm}\label{maintheorem} 
There are elements $\zeta\in \pi_{-1}(L_{K(2)}S^0)$ and 
$e\in \pi_{-3}(L_{K(2)}S^0)$ that induce an isomorphism 
of algebras 
$$
\Lambda_{\QQ_3}(\zeta,e) \cong \pi_*(L_{K(2)}S^0)\otimes\QQ\ .
$$
\end{thm}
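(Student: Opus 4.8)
The strategy is to deduce the theorem from the calculation of $\pi_*L_{K(2)}(S^0/3)$ in \cite{HKM}, where $S^0/3$ is the mod-$3$ Moore spectrum. I would begin with two reductions. First, rationally the descent spectral sequence \eqref{descentss} collapses onto the line $t=0$: the group $\GG_2$ contains a central pro-cyclic subgroup $\ZZ_3^\times$ which acts trivially on $(E_2)_0$, hence on $(E_2)_0\otimes\QQ$, but acts on $(E_2)_{2m}$ for $m\ne 0$ through a character whose value on a topological generator differs from $1$; since conjugation by a central element induces the identity on group cohomology, this forces $H^s_c(\GG_2,(E_2)_{2m}\otimes\QQ)=0$ for all $m\ne 0$. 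Second, each $\pi_nL_{K(2)}S^0$ is a finitely generated $\ZZ_3$-module: $\pi_nL_{K(2)}(S^0/3)$ is finite for every $n$ by \cite{HKM}, so $(\pi_nL_{K(2)}S^0)/3$ is finite by the long exact sequence of $S^0\xrightarrow{3}S^0\to S^0/3$, and since $\pi_*L_{K(2)}S^0$ is $L$-complete (Hovey--Strickland) it has no nonzero $\QQ_3$-vector space summand, so the $L$-complete form of Nakayama's lemma gives finite generation. Hence $\pi_nL_{K(2)}S^0\otimes\QQ\cong\QQ_3^{r_n}$ for integers $r_n\ge 0$, and the theorem reduces to showing $r_n=1$ for $n\in\{0,-1,-3,-4\}$, $r_n=0$ otherwise, together with the algebra identification.

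The heart of the argument is the determination of the $r_n$. The cofiber sequence above produces short exact sequences
$$
0\longra (\pi_nL_{K(2)}S^0)/3\longra\pi_nL_{K(2)}(S^0/3)\longra (\pi_{n-1}L_{K(2)}S^0)[3]\longra 0,
$$
which, together with the Bockstein self-map $S^0/3\to\Sigma S^0/3$, assemble into the mod-$3$ Bockstein spectral sequence with $E_1$-term $\pi_*L_{K(2)}(S^0/3)$, $r$-th differential the $r$-th Bockstein $\beta_r$, and $E_\infty$-term $\bigoplus_n\FF_3^{r_n}$. Since \cite{HKM} provides both $\pi_*L_{K(2)}(S^0/3)$ and the Bockstein operations on it, one runs this spectral sequence and checks that every class cancels against a Bockstein except for a one-dimensional contribution in each of the degrees $0,-1,-3,-4$; in particular the unbounded, $v_1$-periodic (``$K(1)$-local'') part of $\pi_*L_{K(2)}(S^0/3)$ must disappear entirely. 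This yields the asserted $r_n$, so $\pi_*L_{K(2)}S^0\otimes\QQ$ is four-dimensional and concentrated in degrees $0,-1,-3,-4$.

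It remains to identify the generators. Let $\zeta\in\pi_{-1}L_{K(2)}S^0$ be the class detected in filtration $1$ of \eqref{descentss} by the homomorphism $\GG_2\to\ZZ_3^\times\to\ZZ_3$ obtained from the reduced norm; it is a permanent cycle and is nonzero after rationalization by the previous step. Let $e\in\pi_{-3}L_{K(2)}S^0$ be any generator of the rank-one group $\pi_{-3}$. Because $\pi_*L_{K(2)}S^0\otimes\QQ$ vanishes in degrees $-2$ and $-6$, the relations $\zeta^2=0$ and $e^2=0$ hold for free, so the only remaining point is that $\zeta e$ generates the one-dimensional group $\pi_{-4}L_{K(2)}S^0\otimes\QQ$. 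This follows from multiplicativity of \eqref{descentss}: the class detecting $e$ lies in filtration $\ge 1$, and its product with the reduced-norm class detecting $\zeta$ is nonzero in $H^*_c(\GG_2,(E_2)_0\otimes\QQ)$ — equivalently, it can be read off from the fibre sequence $L_{K(2)}S^0\to E_2^{h\GG_2^1}\xrightarrow{\psi-1}E_2^{h\GG_2^1}$ exhibiting $L_{K(2)}S^0$ as the $h\ZZ_3$-fixed points attached to the reduced norm, whose boundary map is multiplication by $\zeta$ and is rationally injective on $\pi_{-3}$. Hence $\Lambda_{\QQ_3}(\zeta,e)\to\pi_*L_{K(2)}S^0\otimes\QQ$ is an isomorphism of algebras.

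I expect the main obstacle to be the rank computation: converting the intricate answer of \cite{HKM} for $\pi_*L_{K(2)}(S^0/3)$ into the $r_n$ requires complete control of all the higher Bockstein differentials and, above all, a verification that no free $\ZZ_3$-summand of $\pi_*L_{K(2)}S^0$ is hidden in an unexpected degree — equivalently, that the whole $v_1$-periodic part of $\pi_*L_{K(2)}(S^0/3)$ is $3$-torsion in the sphere, so that the rational homotopy stays finite-dimensional. The apparently more direct route of computing $H^*_c(\GG_2,(E_2)_0\otimes\QQ)$ by hand is deceptive, since that module is complete and its continuous cohomology is not the direct sum of the cohomologies of its associated graded quotients; passing through \cite{HKM} is precisely what circumvents having to control this completion.
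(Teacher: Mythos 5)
Your outer framework (rational collapse onto internal degree $t=0$ via the center, degreewise finite generation, then a rank count plus the fiber sequence $L_{K(2)}S^0 \to E_2^{h\GG_2^1}\xrightarrow{\psi-1}E_2^{h\GG_2^1}$ to identify the products) is sound, and the first and last steps are close to what the paper does (Proposition \ref{rational} and the role of $\zeta$ in Theorem \ref{L1}). But the central step --- determining the ranks $r_n$ by running the full mod-$3$ Bockstein spectral sequence on $\pi_*L_{K(2)}(S/3)$ --- is a genuine gap, for two reasons. First, the input you cite does not exist: \cite{HKM} computes $\pi_*L_{K(2)}(S/3)$ but not the higher Bockstein differentials on it; determining those is essentially equivalent to the full integral computation of $\pi_*L_{K(2)}S^0$, i.e.\ to the very Shimomura--Wang calculation that this paper is correcting. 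You flag this yourself as ``the main obstacle,'' but it is not an obstacle to be smoothed over later --- it is the entire content of the theorem, and your proposal gives no mechanism for controlling the unbounded $v_1$-periodic families through infinitely many Bockstein pages. Second, your closing remark dismisses as ``deceptive'' exactly the route the paper takes: the paper \emph{does} compute $H^\ast(\GG_2,(E_2)_0)$ integrally, and the completion issue you worry about is handled by the finite length $\GG_2^1$-resolution $C_\bullet$ of Theorem \ref{alg-res}, not by passing through the homotopy of the Moore spectrum.

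Concretely, the paper's trick is to run the ``Bockstein'' one level earlier, on the $q=0$ line of the algebraic resolution spectral sequence rather than on homotopy groups. The short exact sequence $0\to (E_2)_0\xrightarrow{\times 3}(E_2)_0\to (E_2)_0/3\to 0$ induces (by Lemma \ref{when-exact}, whose hypothesis $H^1(G_{24},(E_2)_0)=0$ is checked from Remark \ref{integral-calc}) a long exact sequence of the functors $E_2^{p,0}(-)$, and the only input needed from \cite{HKM} is Corollary \ref{whats-needed}: $E_2^{p,0}((E_2)_0/3)$ is $\FF_3$ for $p=0,3$ and $0$ for $p=1,2$. A Nakayama argument for profinite $\ZZ_3$-modules (Remark \ref{profinite}) then forces $E_2^{p,0}((E_2)_0)\cong \ZZ_3,0,0,\ZZ_3$ with no higher Bockstein analysis at all, because two of the four groups already vanish mod $3$ and the other two are cyclic mod $3$. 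This gives Theorem \ref{rational2} integrally, and the theorem follows from Proposition \ref{rational}, the K\"unneth splitting off of $\Lambda(\zeta)$ for the central $\ZZ_3$, and the convergence and collapse of the rationalized descent spectral sequence (Lemma \ref{yet-to-be-pre}). If you want to salvage your approach, you should replace your step two by this argument; as written, your proof reduces the theorem to a computation that is strictly harder than the theorem itself.
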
 

Our result is  in agreement with the result predicted by 
Hopkins' chromatic splitting conjecture \cite{hov}, and in fact, 
we will establish this splitting conjecture for $n=2$ and $p=3$. 

We will prove a  more general result which will be useful
for calculations with the Picard group of Hopkins \cite{Picard}.  
Before stating that, let us give some notation.

If $X$ is a spectrum, then we define
$$
(E_n)_\ast X \defeq \pi_\ast L_{K(n)}(E_n \wedge X).
$$ 
Despite the notation, $(E_n)_\ast(-)$ is not quite a homology theory,
because it doesn't take wedges to sums; however, it is a sensitive
and tested algebraic invariant for the $K(n)$-local category.
The ${E_n}_*$-module $(E_n)_*X$ is equipped with the
${\goth m}$-adic topology 
where ${\goth m}$ is the maximal ideal in $(E_n)_0$; this topology 
is always topologically complete but need not be separated. 
With respect to this topology, the group $\GG_n$ acts through
continuous maps and the action is twisted because it
is compatible with the action of $\GG_n$ on the coefficient ring
$(E_n)_\ast$. See \cite{GHMR} \S 2 for
some precise assumptions which guarantee that $(E_n)_*X$ 
is complete and separated. All modules which will be used in this paper 
will in fact satisfy these assumptions. Let $E(n)$ denote the
$n$th Johnson-Wilson spectrum and $L_n$ localization  with respect
to $E(n)$. Note that $E(0)_\ast $ is rational homology and $E(1)$ is
the Adams summand of $p$-local complex $K$-theory.
Let $S_p^n$ denote the $p$-adic completion of the sphere.

\begin{thm}\label{maintheorem2} Let $p=3$ and let $X$ be any $K(2)$-local
spectrum so that $(E_2)_\ast X \cong (E_2)_\ast \cong (E_2)_\ast S^0$
as a twisted $\GG_2$-module. Then there is a weak equivalence
of $E(1)$-local spectra
$$
L_1X \cong L_1(S_3^0 \vee S_3^{-1}) \vee L_0(S_3^{-3}\vee S_3^{-4}).
$$
\end{thm}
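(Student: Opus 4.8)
The plan is to study $L_1X$ through the chromatic fracture square
$$L_1X\ \simeq\ L_{K(1)}X\ \times_{L_0L_{K(1)}X}\ L_0X\ ,$$
whose three corners and gluing map we will show depend only on the twisted $\GG_2$-module $(E_2)_\ast X$. Since by hypothesis that module is the one belonging to $S^0$, the claim reduces to computing $L_{K(1)}X$ and $L_0X$ and matching the gluing map $g\colon L_0X\to L_0L_{K(1)}X$ with the corresponding data for the wedge $S_3^0\vee S_3^{-1}\vee S_3^{-3}\vee S_3^{-4}$; here one uses that $L_1(S_3^0\vee S_3^{-1})$ is the fracture pullback of $L_{K(1)}(S_3^0\vee S_3^{-1})\to L_0L_{K(1)}(S_3^0\vee S_3^{-1})\leftarrow L_0(S_3^0\vee S_3^{-1})$ and that $L_0(S_3^{-3}\vee S_3^{-4})$ lives entirely in the rational corner, in degrees $-3$ and $-4$ where $L_0L_{K(1)}(S_3^0\vee S_3^{-1})$ vanishes.

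\emph{The rational corner.} The homotopy of $L_0X$ is $\pi_\ast X\otimes\QQ$, the abutment of \eqref{descentss} for $X$ after $\otimes\QQ$, with $E_2$-term $H^\ast_c(\GG_2,(E_2)_\ast X)\otimes\QQ\cong H^\ast_c(\GG_2,(E_2)_\ast)\otimes\QQ$. The twisted coefficients $(E_2)_t$ with $t\neq 0$ contribute nothing, since the central subgroup $\Z_3^\times\subset\GG_2$ already acts on them through a nontrivial character; and $H^\ast_c(\GG_2,(E_2)_0)\otimes\QQ$ is computed using $\GG_2=\mathbb{S}_2\rtimes\Gal(\mathbb{F}_9/\mathbb{F}_3)$ and the splitting of $\mathbb{S}_2$ as the product of its norm-one subgroup $\mathbb{S}_2^1$ by a central $\Z_3$: Poincaré duality for the $3$-adic analytic group $\mathbb{S}_2^1$ of dimension $3$ (which has finite abelianisation) gives $H^\ast_c(\mathbb{S}_2^1,\QQ_3)\cong\QQ_3$ in cohomological degrees $0$ and $3$, so that $H^\ast_c(\GG_2,(E_2)_0)\otimes\QQ\cong\QQ_3$ in degrees $0,1,3,4$. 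Everything lies in internal degree $0$, so there are no differentials, whence $\pi_\ast X\otimes\QQ$ is $\QQ_3$ in degrees $0,-1,-3,-4$ and is free of rank one as a module over $\pi_\ast L_{K(2)}S^0\otimes\QQ=\Lambda_{\QQ_3}(\zeta,e)$. Fixing generators $\iota_X\in\pi_0X$, $\zeta_X=\zeta\cdot\iota_X\in\pi_{-1}X$, $e_X\in\pi_{-3}X$ of the free parts, we get $L_0X\simeq L_0(S_3^0\vee S_3^{-1}\vee S_3^{-3}\vee S_3^{-4})$. (Applying $L_0$ to the theorem already yields Theorem~\ref{maintheorem}; alternatively the same information is read off from the list of infinite homotopy groups of $L_{K(2)}S^0$ in \cite{HKM}.)

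\emph{The $K(1)$-local corner.} Write $V(0)=S^0/3$. Since $(E_2)_\ast X$ is $3$-torsion free, $(E_2)_\ast(X\wedge V(0))\cong(E_2)_\ast V(0)$ as twisted $\GG_2$-modules, so the $K(2)$-local descent spectral sequence for $X\wedge V(0)$ has the same $E_2$-term as the one for $L_{K(2)}V(0)$ computed in \cite{HKM}. As $\pi_\ast(X\wedge V(0))$ is $3$-torsion, $X\wedge V(0)$ is $E(0)$-acyclic and $L_{K(1)}(X\wedge V(0))$ is computed by inverting $v_1\in H^0$ in that spectral sequence, using the telescope theorem at height $1$ to identify $v_1^{-1}(X\wedge V(0))$ with $L_{K(1)}(X\wedge V(0))$. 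The crucial point, and the step I expect to be the main obstacle, is that this $v_1$-inverted spectral sequence — whose $E_2$-term is a free module of finite rank over $\mathbb{F}_3[v_1^{\pm1}]$, supported in bounded cohomological filtration — has the same differentials for $X\wedge V(0)$ as for $L_{K(2)}V(0)$. This is precisely the assertion that the ``exotic'' data by which $X$ may differ from $S^0$ in the $K(2)$-local category is invisible after smashing with $V(0)$ and inverting $v_1$; it should follow either from the equivalence $X\wedge V(0)\simeq L_{K(2)}V(0)$ (via the resolution of \cite{GHMR} and the analysis of the $K(2)$-local Picard group), or from a direct argument using the bounded filtration together with the $L_{K(1)}S^0$-module structure. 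Granting it, $\pi_\ast L_{K(1)}X$ — recovered from $\pi_\ast\bigl((L_{K(1)}X)/3\bigr)=\pi_\ast L_{K(1)}(X\wedge V(0))$ by the $3$-Bockstein, using that $\pi_\ast L_{K(1)}X$ is finitely generated over $\Z_3$ in each degree — is free of rank two over $\pi_\ast L_{K(1)}S^0$ on classes in degrees $0$ and $-1$, namely the images of $\iota_X$ and $\zeta_X$. Since $L_{K(1)}(S_3^0\vee S_3^{-1})$ is free of rank two over $L_{K(1)}S^0$ on such generators and $K(1)$-local spectra are detected by their homotopy, the $L_{K(1)}S^0$-module map determined by these classes is an equivalence $L_{K(1)}(S_3^0\vee S_3^{-1})\xrightarrow{\ \sim\ }L_{K(1)}X$.

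\emph{Assembly.} With $L_{K(1)}X$ and $L_0X$ identified, it remains to match the gluing map $g$, which is a map of rational spectra. In degrees $-3$ and $-4$ its target vanishes, so $g$ kills the summand $L_0(S_3^{-3}\vee S_3^{-4})$ of $L_0X$ spanned by $e_X$ and $\zeta_Xe_X$; hence the fracture pullback splits as $\bigl(L_{K(1)}X\times_{L_0L_{K(1)}X}L_0(S_3^0\vee S_3^{-1})\bigr)\vee L_0(S_3^{-3}\vee S_3^{-4})$. On the remaining summand $g$ sends $\iota_X$ to a generator of $\pi_0L_0L_{K(1)}X=\QQ_3$ (because $\iota_X$ generates $L_{K(1)}X$ over $L_{K(1)}S^0$, so $\pi_0L_{K(1)}X=\Z_3\,\iota_X$) and sends $\zeta_X$ to a generator of the $\zeta_X$-line in $\pi_{-1}L_0L_{K(1)}X=\QQ_3^2$; under the chosen identification $L_{K(1)}X\simeq L_{K(1)}(S_3^0\vee S_3^{-1})$ this is exactly the standard gluing map of $L_1(S_3^0\vee S_3^{-1})$, so that pullback is $L_1(S_3^0\vee S_3^{-1})$, and $L_1X\simeq L_1(S_3^0\vee S_3^{-1})\vee L_0(S_3^{-3}\vee S_3^{-4})$. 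The only genuinely substantial inputs are the calculation of $\pi_\ast L_{K(2)}V(0)$ from \cite{HKM} and the comparison of $v_1$-periodic differentials in the third step; everything else is formal work with the fracture square and bookkeeping with $\pi_\ast L_{K(1)}S^0$.
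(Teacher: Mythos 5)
Your overall architecture (chromatic fracture square, rational corner, $K(1)$-local corner via mod $3$ reduction and $v_1$-inversion, Bockstein, assembly) matches the paper's, but the step you yourself flag as ``the main obstacle'' is precisely the crux of the whole theorem, and neither of the routes you sketch for it is carried out or clearly viable. The paper never tries to compare $v_1$-inverted differentials in the full descent spectral sequence for $\GG_2$ on $(E_2)_*X/3$ against those for $(E_2)_*/3$. Instead it runs everything through the finite resolution of \cite{GHMR}: the tower (\ref{key-tower}) whose fibers are suspensions of $E_2^{hF}$ for $F=G_{24},SD_{16}$. For these finite $F$ the fixed point spectral sequence has a horizontal vanishing line, so an edge-homomorphism argument gives $L_{K(1)}L_{K(2)}(E_2^{hF}\wedge X)\simeq L_{K(1)}E_2^{hF}$ (Lemma \ref{kappa-and finite}); the $v_1$-inverted algebraic resolution spectral sequence collapses onto filtrations $0$ and $1$ by the explicit differentials of \cite{HKM} (Theorem \ref{E1diff}, Proposition \ref{local-e2s}); and the $d_1$ of the topological tower agrees with the algebraic $d_1$ by construction. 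This yields $L_{K(1)}(S/3)\simeq L_{K(1)}L_{K(2)}(E_2^{h\GG_2^1}\wedge X\wedge S/3)$ (Proposition \ref{L1v0}), then the integral statement by induction over $S/3^n$, and finally $L_{K(1)}X\simeq L_{K(1)}(S^0\vee S^{-1})$ via the fiber sequence $X\to L_{K(2)}(E_2^{h\GG_2^1}\wedge X)\xrightarrow{\psi-1}L_{K(2)}(E_2^{h\GG_2^1}\wedge X)$ (Theorem \ref{L1}). Your first suggested shortcut, that $X\wedge S/3\simeq L_{K(2)}(S/3)$ for every $X\in\kappa_2$, is a substantive claim about the exotic elements of $\kappa_2\cong(\ZZ/3)^2$ which requires proof and is not obviously easier than what it is meant to establish; your second (``a direct argument using the bounded filtration'') is not an argument. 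As written, the $K(1)$-local corner is a gap.

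Two smaller points. In the rational corner, Poincar\'e duality for the norm-one subgroup computes $H^*_c(\GG_2^1,\QQ_3)$ with \emph{trivial} coefficients, whereas what is needed is $H^*(\GG_2^1,(E_2)_0)\otimes\QQ$ for the large nontrivial module $(E_2)_0$; reducing the latter to the former is exactly the content of Theorem \ref{rational2}, which the paper again proves via the algebraic resolution together with the mod $3$ input of Corollary \ref{whats-needed} -- so even the rational corner is not independent of \cite{HKM}. And in the assembly you need the gluing map $L_0X\to L_0L_{K(1)}X$ to be \emph{determined} by its effect on $\pi_0$ and $\pi_{-1}$ and to match the standard one; the paper secures this using $\pi_1L_{K(1)}X=0$ and the identification of $\zeta$ as the boundary of the unit in the $\psi-1$ fiber sequence (Theorem \ref{L1}.b), which your write-up assumes rather than proves.
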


We will use Theorem \ref{maintheorem} to prove Theorem
\ref{maintheorem2},  but we note that Theorem \ref{maintheorem}
is subsumed into Theorem \ref{maintheorem2}. Indeed,
$\pi_\ast X\otimes \QQ \cong \pi_\ast L_1X\otimes \QQ$ and
$$
\pi_\ast L_1S_3^0 \otimes \QQ \cong \pi_\ast L_0S_3^0 \cong \QQ_3
$$
all concentrated in degree zero. The generality of the statement
of Theorem \ref{maintheorem2} is not vacuous; there are such
$X$ which are not weakly equivalent to $L_{K(2)}S^0$ -- ``exotic''
elements in the $K(2)$-local Picard group. See \cite{GHMRpic} and
\cite{shiPic}.

We remark that  Theorem \ref{maintheorem} disagrees with 
the calculation by Shimomura and Wang  in
\cite{sh-w}.  In particular, Shimomura and Wang find the
exterior algebra on $\zeta$ only.  

An interesting feature of our proof of Theorem \ref{maintheorem} 
is that it does not require a preliminary calculation of all of
$\pi_*(L_{K(2)}S^0)$.  In fact, we get away with much less, namely 
with only a (partial) understanding of the $E_2$-term of the 
Adams-Novikov Spectral Sequence converging to 
$\pi_*L_{K(2)}(S/3)$ where $S/3$ denotes the mod-$3$ Moore spectrum 
(see Corollary \ref{whats-needed}). Our method of proof 
can also be used to recover the rational homotopy of $L_{K(2)}S^0$ 
as well as the chromatic splitting conjecture 
at primes $p>3$ \cite{sh-y}; we only need to use the analogous corollary 
on the $E_2$-term of the Adams-Novikov Spectral Sequence for the 
$K(2)$-localization of the mod-$p$ Moore spectrum. 

In section 2 we give some general background on the automorphism
group $\GG_2$ and we review the main results of \cite{GHMR}. In section 3 
we recall those results of \cite{HKM} which are relevant for the 
purpose of this paper. Section 4 gives the calculation of the rational 
homotopy groups of $L_{K(2)}S^0$ and in the final section 5 we prove 
Theorem \ref{maintheorem2} and the chromatic splitting conjecture for $n=2$ 
and $p=3$. See Corollary \ref{chrom-split-2}.

\section{Background}

Let  $\G_2$ be the Honda formal group law of height $2$; this is the
unique $3$-typical formal group law over $\FF_9$ with $3$-series
$[3] (x) = x^9$. We begin with a short analysis of the Morava stabilizer
group $\GG_2$, the group of automorphisms of the pair $(\FF_9,\G_2)$.
Let  $\W = \W(\FF_9)$ denote the Witt vectors of $\FF_9$ and
$$
\Oa_2=\W\langle S \rangle /(S^2=3, wS=Sw^\sigma)\ . 
$$ 
Then $\Oa_2$ is isomorphic to the ring of endomorphisms of $\Gamma_2$
over
$\FF_9$; hence $\Oa_2^{\times}$ is isomorphic to the group $\s_2$ of
automorphisms  of $\G_2$ over $\F_9$. Since $\G_2$ is defined over $\F_3$,
there is a splitting 
$$
\GG_2\cong \s_2\rtimes \Gal(\FF_9/\FF_3)
$$ 
with Galois action given by $\phi(x+yS)=x^{\sigma}+y^{\sigma}S$. Here 
$x,y\in \W$ and $(-)^\sigma$ denotes the lift of Frobenius to the Witt vectors.

The $3$-adic analytic group $\s_2 \subseteq \GG_2$  contains elements
of order $3$; indeed, an explicit such element is given by 
$$
a=-\frac{1}{2}(1+\omega S)
$$ 
where $\omega$ is a fixed primitive $8$-th root of unity in 
$\W$. If $C_3$ is the cyclic group of order $3$, the map
$H^\ast(\s_2,\F_3) \to H^\ast(C_3,\F_3)$ defined by $a$ is surjective and,
hence,  $\s_2$ and $\GG_2$ cannot have  finite cohomological
dimension. As a consequence, the trivial module $\Z_3$ 
cannot admit a projective resolution of finite length.  
Nonetheless, $\GG_2$ has virtual finite cohomological dimension,
and admits a finite length resolution by permutation modules obtained from
finite subgroups.  Such a
resolution was constructed in \cite{GHMR} using the following two 
finite subgroups of $\GG_2$. The notation $\langle - \rangle$ indicates
the subgroup generated by the listed elements.
\begin{enumerate}

\item Let  $G_{24}=\langle a,\omega^2,\omega\phi\rangle 
\cong C_3 \rtimes  Q_8$. Here $Q_8$ is the quaternion group of
order $8$. Note $\omega^2$  acts non-trivially and $\omega\phi$
acts trivially on $C_3$. 

\item $SD_{16}=\langle \omega, \phi\rangle $. This subgroup is
isomorphic to the semidihedral group of
order $16$. 
\end{enumerate}

\begin{rem}\label{split-g2} The group $\GG_2$ splits
as a product $\GG_2\cong \GG_2^1\times \Z_3$.
To be specific, the center of $\GG_2$ is isomorphic to $\Z_3^\times$ and
there is an isomorphism from the additive group
$\ZZ_3$ onto the multiplicative subgroup
$1+ 3\Z_3 \subseteq \Z_3^\times$ sending $1$ to $4$. There is
also  a reduced determinant map $\GG_2 \to \Z_3$. (See
\cite{GHMR}.) The
composition $\Z_3 \to \GG_2 \to \Z_3$ is multiplication by
$2$, giving the splitting.  All finite subgroups of $\GG_2$
are automatically finite subgroups of $\GG_2^1$.
\end{rem}

Because
of this splitting, any resolution of the trivial $\GG_2^1$-module
$\Z_3$ can be promoted to a resolution of the trivial
$\GG_2$-module. See Remark \ref{full-grp} below. Thus we begin with $\GG_2^1$.

If $X=\lim X_\alpha $ is a profinite set, define $\Z_3[[X]] =
\lim\ \Z/3^n[X_\alpha]$. The following is the main algebraic
result of \cite{GHMR}.

\begin{thm}\label{alg-res} There is an exact complex of 
$\Z_3[[\GG_2^1]]$-modules 
of the form 
$$
0\to C_3\to C_2\to C_1\to C_0\to \Z_3 
$$ 
with
$$
C_0=C_3\cong \Z_3[[\GG_2^1/G_{24}]]
$$
and 
\begin{align*}
C_1=C_2\cong \Z_3[[\GG_2^1]]\otimes_{\Z_3[SD_{16}]}\Z_3(\chi)\\
\end{align*}
where  $\Z_3(\chi)$ is the $SD_{16}$ module which is free of
rank 1 over $\Z_3$ and with $\omega$ and $\phi$ both acting by 
multiplication by $-1$. 
\end{thm}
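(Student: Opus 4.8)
The plan is to construct the complex by splicing together standard pieces of equivariant homotopy-theoretic algebra, exactly as one does when building a resolution from a family of subgroups, and then to verify exactness one spot at a time. First I would set up the augmentation $C_0 = \Z_3[[\GG_2^1/G_{24}]] \to \Z_3$, which is just the map induced by collapsing the coset space to a point; its kernel $I$ is the augmentation-type ideal, and the whole problem reduces to resolving $I$ by the remaining terms. The key structural input is that $G_{24}$ and $SD_{16}$ together ``see'' enough of $\GG_2^1$: the subgroup $G_{24}$ contains the order-$3$ element $a$ (so it detects the part of the cohomology of $\GG_2^1$ that obstructs finite cohomological dimension), while $SD_{16}$ contains $\omega$ and $\phi$ and governs the $2$-primary and Galois-type behaviour. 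I would exploit Remark~\ref{split-g2}, which reduces everything to $\GG_2^1$, and the fact that $\GG_2^1$ is a compact $3$-adic analytic group of virtual cohomological dimension $3$; this is why the complex has length exactly three.

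Next I would construct the middle terms $C_1 = C_2 \cong \Z_3[[\GG_2^1]] \otimes_{\Z_3[SD_{16}]} \Z_3(\chi)$. The appearance of the sign representation $\Z_3(\chi)$, rather than the trivial module, is forced: the relevant classes in the cohomology of $SD_{16}$ with $\Z_3$-coefficients live in the $\chi$-isotypic part, and tensoring up the trivial module would give the wrong homology. So the maps $C_2 \to C_1$ and $C_1 \to C_0$ are built from explicit double-coset data, using the embeddings of the finite subgroups and keeping careful track of the twisting by $\chi$. I would write down these maps by choosing coset representatives and specifying the matrix of the differential on generators; the self-duality $C_1 = C_2$ reflects a Poincaré-duality phenomenon for $\GG_2^1$ and should be visible in the symmetry of the construction.

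Finally, exactness. I would check it by smashing the complex with $\FF_3$ and reducing to a computation in $\FF_3$-cohomology: a complex of permutation modules over a profinite group is exact iff it becomes exact after applying $\Hom_{\Z_3[[\GG_2^1]]}(-, \FF_3)$, i.e.\ iff the associated cochain complex computes $H^*(\GG_2^1, \FF_3)$ correctly in low degrees and the higher terms vanish. Since $H^*(F, \FF_3)$ is known for $F = G_{24}$ and $F = SD_{16}$ (these are the cohomologies of a quaternion-by-$C_3$ group and a semidihedral group, both classical), and since $H^*(\GG_2^1, \FF_3)$ was computed in \cite{GHMR}, this becomes a finite check: verify that the Euler characteristic and the ranks in each degree match, and that the differentials are injective/surjective where required. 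The main obstacle will be precisely this verification of exactness in the middle — ensuring the two maps $C_3 \to C_2 \to C_1$ compose to zero \emph{and} that there is no extra homology — because one must control the $3$-torsion coming from the order-$3$ element $a$ and confirm that it is exactly cancelled between $C_0$ and $C_3$; this is where the specific choice of generators and the twisting character $\chi$ do all the work, and where a naive dimension count is not by itself conclusive.
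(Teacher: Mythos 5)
The paper itself offers no proof of Theorem \ref{alg-res}: it is quoted verbatim from \cite{GHMR}, where its proof occupies most of a section and depends on a prior, lengthy computation of the mod-$3$ homology of $\GG_2^1$. Your outline does capture several correct structural points — starting from the augmentation $\Z_3[[\GG_2^1/G_{24}]]\to\Z_3$ and resolving the successive kernels; the length of the complex being the virtual cohomological dimension $3$; the symmetry $C_0\cong C_3$, $C_1\cong C_2$ reflecting Poincar\'e duality of an open torsion-free subgroup; the character $\chi$ being forced by the isotypic decomposition over $SD_{16}$. But the step on which everything rests, the verification of exactness, is based on a criterion that is false as stated, and the proposal omits the input that actually makes the argument work.

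Concretely: exactness of $C_\bullet\to\Z_3$ is not equivalent to the cochain complex $\Hom_{\Z_3[[\GG_2^1]]}(C_\bullet,\FF_3)$ ``computing $H^\ast(\GG_2^1,\FF_3)$ correctly.'' The modules $C_0$ and $C_3$ are not projective, since $3$ divides $|G_{24}|$; by Shapiro's lemma $\Ext^q_{\Z_3[[\GG_2^1]]}(C_p,\FF_3)\cong H^q(G_{24},\FF_3)\neq 0$ for all $q\geq 0$ when $p=0,3$, so the hypercohomology spectral sequence has contributions far off the $q=0$ line (this is exactly the point of Corollary \ref{SS} and Lemma \ref{when-exact}), and no conclusion about exactness of $C_\bullet$ can be drawn from the $q=0$ row alone. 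Moreover an Euler-characteristic or rank count is meaningless here: each $C_p$ is infinitely generated over $\Z_3$ and $H^\ast(\GG_2^1,\FF_3)$ is infinite-dimensional. The detection principle that does work, and that \cite{GHMR} uses, is topological Nakayama: a complex of profinite $\Z_3[[\GG_2^1]]$-modules is exact if and only if its reduction modulo $3$ is, and surjectivity onto each successive kernel is checked after applying the coinvariants functor $\FF_3\hat{\otimes}_{\Z_3[[\GG_2^1]]}(-)$; carrying this out requires knowing $H_\ast(\GG_2^1;\FF_3)$ in low degrees as a comodule over the homology of the finite subgroups, which is the genuinely hard computation and is absent from your sketch. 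Finally, the differentials cannot simply be ``written down from double-coset data'': even the existence of a suitable map $C_2\to C_1$ is part of what must be proved (it is produced by a projective-cover argument, $C_1$ and $C_2$ being projective because $|SD_{16}|$ is prime to $3$), and pinning that map down explicitly is a separate problem only taken up in later work such as \cite{HKM}.
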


We recall that a continuous $\ZZ_3[[\GG_2]]$-module $M$ is 
{\it profinite} if there is an isomorphism 
$M\cong\lim_{\a} M_\alpha$ 
where each $M_\alpha$ is a finite $\ZZ_3[[\GG_2]]$ module.
 
\begin{cor}\label{SS} Let $M$ be a profinite $\Z_3[[\GG_2^1]]$-module. 
Then there is a first quadrant cohomology spectral sequence 
$$
E_1^{p,q}(M)\cong \Ext_{\Z_3[[\GG_2^1]]}^q(C_p,M)\Longrightarrow 
H^{p+q}(\GG_2^1,M)
$$ 
with 
$$
E_1^{0,q}(M)=E_1^{3,q}(M)\cong H^q(G_{24},M)
$$ 
and 
$$
E_1^{1,q}(M)=E_1^{2,q}(M)\cong 
\begin{cases} 
\Hom_{\Z_3[SD_{16}]}(\Z_3(\chi),M) &  q=0  \\ 
0 & q>0 \ .  \\
\end{cases} 
$$ 
\end{cor}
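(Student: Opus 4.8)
The plan is to convert the length-three resolution of Theorem~\ref{alg-res} into a hyper-$\Ext$ spectral sequence for the functor $\Ext^\ast_{\Z_3[[\GG_2^1]]}(-,M)$, and then to evaluate the $\Ext$-groups of the individual modules $C_p$ by Shapiro's lemma. Throughout I would work inside the category of profinite $\Z_3[[\GG_2^1]]$-modules of \cite{GHMR}~\S 2, in which all the relevant $\Hom$'s and $\Ext$'s are continuous; recall that here $\Ext^q_{\Z_3[[\GG_2^1]]}(\Z_3,M)\cong H^q(\GG_2^1,M)$, continuous cohomology, precisely because $M$ is profinite, and that this category has enough injectives (for instance via Pontryagin duality with the category of discrete torsion modules). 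Fix a continuous injective resolution $M\to I^\bullet$.

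Next I would form the double cochain complex $D^{p,q}=\Hom_{\Z_3[[\GG_2^1]]}(C_p,I^q)$ with $0\le p\le 3$ and $q\ge 0$, horizontal differential induced by the maps $C_{p+1}\to C_p$ of Theorem~\ref{alg-res} and vertical differential induced by $I^q\to I^{q+1}$. Filtering by the injective degree $q$: since $\Hom_{\Z_3[[\GG_2^1]]}(-,I^q)$ is exact and the complex $0\to C_3\to C_2\to C_1\to C_0\to\Z_3\to 0$ is exact, each horizontal complex $\Hom_{\Z_3[[\GG_2^1]]}(C_\bullet,I^q)$ has cohomology $\Hom_{\Z_3[[\GG_2^1]]}(\Z_3,I^q)$ concentrated in filtration $0$; hence this spectral sequence degenerates and the cohomology of the total complex $\mathrm{Tot}(D^{\bullet,\bullet})$ is $H^\ast(\GG_2^1,M)$. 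Filtering instead by the resolution degree $p$ produces a spectral sequence with $E_1^{p,q}=H^q(\Hom_{\Z_3[[\GG_2^1]]}(C_p,I^\bullet))=\Ext^q_{\Z_3[[\GG_2^1]]}(C_p,M)$ and $d_1$ induced by $C_{p+1}\to C_p$, abutting to $H^{p+q}(\GG_2^1,M)$. As the $E_1$-page is supported in the strip $0\le p\le 3$, the spectral sequence is automatically convergent and no completeness issue for the filtration arises; this is the asserted spectral sequence.

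It then remains to identify the terms $\Ext^q_{\Z_3[[\GG_2^1]]}(C_p,M)$. For a finite subgroup $F\subseteq\GG_2^1$ and a $\Z_3[F]$-module $N$ there is a natural adjunction isomorphism $\Hom_{\Z_3[[\GG_2^1]]}(\Z_3[[\GG_2^1]]\otimes_{\Z_3[F]}N,M)\cong\Hom_{\Z_3[F]}(N,M)$, and because $\Z_3[[\GG_2^1]]$ is pro-free, hence flat, over the finite group ring $\Z_3[F]$, this derives to a Shapiro isomorphism $\Ext^q_{\Z_3[[\GG_2^1]]}(\Z_3[[\GG_2^1]]\otimes_{\Z_3[F]}N,M)\cong\Ext^q_{\Z_3[F]}(N,M)$. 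Taking $F=G_{24}$ and $N=\Z_3$ (using $C_0=C_3\cong\Z_3[[\GG_2^1/G_{24}]]\cong\Z_3[[\GG_2^1]]\otimes_{\Z_3[G_{24}]}\Z_3$) identifies $E_1^{0,q}(M)=E_1^{3,q}(M)$ with $\Ext^q_{\Z_3[G_{24}]}(\Z_3,M)=H^q(G_{24},M)$, and taking $F=SD_{16}$ and $N=\Z_3(\chi)$ identifies $E_1^{1,q}(M)=E_1^{2,q}(M)$ with $\Ext^q_{\Z_3[SD_{16}]}(\Z_3(\chi),M)$.

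Finally, since $|SD_{16}|=16$ is a unit in $\Z_3$, the higher cohomology of $SD_{16}$ with coefficients in any $\Z_3$-module vanishes, and as $\Z_3(\chi)$ is $\Z_3$-free of rank one this gives $\Ext^q_{\Z_3[SD_{16}]}(\Z_3(\chi),M)=0$ for $q>0$, while $\Ext^0_{\Z_3[SD_{16}]}(\Z_3(\chi),M)=\Hom_{\Z_3[SD_{16}]}(\Z_3(\chi),M)$, exactly as claimed. I do not expect a substantive obstacle here: the corollary is a formal consequence of Theorem~\ref{alg-res}. The only points needing care will be to stay consistently inside the category of profinite modules so that continuous $\Ext$, completed induction, the flatness of $\Z_3[[\GG_2^1]]$ over $\Z_3[F]$, and the resulting Shapiro lemma all apply — all supplied by the framework of \cite{GHMR}~\S 2 — together with the elementary input $(|SD_{16}|,3)=1$ that collapses the columns $p=1,2$ onto their $q=0$ rows.
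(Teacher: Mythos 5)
Your argument is correct and is exactly the standard derivation the paper relies on (the corollary is stated without proof as a formal consequence of Theorem~\ref{alg-res}): filter the double complex obtained by applying derived $\Hom_{\Z_3[[\GG_2^1]]}(-,M)$ to the finite exact complex $C_\bullet\to\Z_3$, identify the $E_1$-terms by Shapiro's lemma for the (completed) induced modules $C_p$, and use that $|SD_{16}|=16$ is invertible in $\Z_3$, so $\Z_3(\chi)$ is projective over $\Z_3[SD_{16}]$ and the columns $p=1,2$ are concentrated in $q=0$. No substantive difference from the intended proof.
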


\begin{rem}\label{full-grp}These ideas and techniques can easily be 
extended to the full group $\GG_2$ using the splitting 
$\GG_2 \cong \GG_2^1 \times \Z_3$. Let $\psi \in \ZZ_3$ be a
topological generator; then there is a resolution
$$
\xymatrix{
0 \rto &\ZZ_3[[\ZZ_3]] \rto^-{\psi-1} &\ZZ[[\ZZ_3]] \rto & \ZZ_3 \rto &0\ .
}
$$
Write $C_\bullet \to \ZZ_3$ for the resolution of Theorem \ref{alg-res}.
Then the total complex of the double complex 
$$
C_\bullet \hat{\otimes}
\{ \xymatrix{\ZZ_3[[\ZZ_3]] \rto^-{\psi-1} &\ZZ[[\ZZ_3]] } \}
$$
defines an exact complex $D_\bullet \to \ZZ_3$ of $\ZZ_3[[\GG_2]]$-modules. The symbol
$\hat{\otimes}$ indicates the completion of the tensor product. From
this we get a spectral sequence analogous to that of Corollary  \ref{SS}.
\end{rem}

\begin{rem}\label{profinite} In our arguments below, we will use
the functors on profinite $\Z_3[[\GG_2^1]]$-modules 
to profinite abelian groups given by 
$$
M\mapsto E_2^{p,0}(M) =H^p(\Hom_{\Z_3[[\GG_2^1]]}(C_{\bullet},M))\ . 
$$ 
Here $C_\bullet$ is the resolution of Theorem \ref{alg-res}; thus, we 
are using the $q=0$ line of the $E_2$-page of the spectral
sequence of Corollary \ref{SS}.
We would like some information on the exactness of these functors;
for this we need a hypothesis.

If $M$ is a profinite $\ZZ_3[[\GG_2]]$-module then 
$$
\Hom_{\Z_3[[\GG_2^1]]}(C_{\bullet},M) = 
\lim_{\a}\Hom_{\Z_3[[\GG_2^1]]}(C_{\bullet},M_\alpha)
$$
is also necessarily profinite as a $\ZZ_3$-module. Since profinite
$\ZZ_3$-modules are closed under kernels and cokernels,
the groups $E_2^{p,0}(M)$
are also profinite. We will use later that if $M$ is a profinite $\ZZ_3$-module
and $M/3M = 0$, then $M=0$.
\end{rem}

\begin{lem}\label{when-exact}  Suppose  $0\to M_1\to M_2\to M_3\to 0$
is an exact sequence of  profinite $\Z_3[[\GG_2^1]]$-modules such that
$H^1(G_{24},M_1)=0$.  Then there is a long exact sequence of
profinite $\ZZ_3$-modules
$$
0\to E_2^{0,0}(M_1)\to E_2^{0,0}(M_2)\to E_2^{0,0}(M_3)\to E_2^{1,0}(M_1)
\to \ldots \to E_2^{3,0}(M_2)\to E_2^{3,0}(M_3)\to 0 \ . 
$$ 
\end{lem}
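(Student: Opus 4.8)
The natural strategy is to apply the functor $\Hom_{\Z_3[[\GG_2^1]]}(C_\bullet, -)$ to the short exact sequence and then run a hyperhomology/long-exact-sequence argument. Since $C_\bullet$ is a bounded complex of projective (in fact permutation) $\Z_3[[\GG_2^1]]$-modules, applying $\Hom_{\Z_3[[\GG_2^1]]}(C_\bullet, -)$ to $0\to M_1\to M_2\to M_3\to 0$ yields a short exact sequence of cochain complexes
$$
0\to \Hom_{\Z_3[[\GG_2^1]]}(C_\bullet,M_1)\to \Hom_{\Z_3[[\GG_2^1]]}(C_\bullet,M_2)\to \Hom_{\Z_3[[\GG_2^1]]}(C_\bullet,M_3)\to 0,
$$
because each $C_p$ is projective, so $\Hom(C_p,-)$ is exact. (One should check that exactness is preserved after passing to the inverse limit over the finite quotients $M_{i,\alpha}$; this is where profiniteness of the $M_i$ enters, since an inverse system of short exact sequences of finite modules stays exact in the limit, the Mittag-Leffler condition being automatic for finite groups.) Taking cohomology of this short exact sequence of complexes produces the long exact sequence
$$
\cdots\to E_2^{p,0}(M_1)\to E_2^{p,0}(M_2)\to E_2^{p,0}(M_3)\to E_2^{p+1,0}(M_1)\to\cdots,
$$
and since $C_\bullet$ is concentrated in degrees $0,1,2,3$, this sequence starts at $p=0$ and ends with $E_2^{3,0}(M_3)\to 0$. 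So far this uses nothing about $G_{24}$.

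The role of the hypothesis $H^1(G_{24},M_1)=0$ is to pin down the left-hand end of the sequence, i.e.\ to show the map $E_2^{0,0}(M_1)\to E_2^{0,0}(M_2)$ is injective, equivalently that the connecting-style term arising from $H^1$ vanishes. The point is that $E_2^{0,0}(M)=\ker\big(H^0(G_{24},M)\to \Hom_{\Z_3[SD_{16}]}(\Z_3(\chi),M)\big)$ via the identifications of Corollary~\ref{SS}, i.e.\ $E_2^{0,0}$ is a \emph{subfunctor} of $M\mapsto H^0(G_{24},M)=M^{G_{24}}$. Applying the snake lemma to the map of left-exact sequences built from $C_0\to C_1$ (equivalently, chasing the beginning of the hyperhomology spectral sequence of the complex $C_\bullet$ against $M_\bullet$), the obstruction to left-exactness of $E_2^{0,0}(-)$ on our short exact sequence is governed by $H^1(G_{24},M_1)$, which we have assumed is zero. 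Concretely: from $0\to M_1^{G_{24}}\to M_2^{G_{24}}\to M_3^{G_{24}}\to H^1(G_{24},M_1)=0$ we already get that $E_1^{0,\bullet}$-level takes our sequence to a short exact sequence in the relevant range, and the same for $E_1^{3,\bullet}$, while $E_1^{1,\bullet}=E_1^{2,\bullet}$ involves only the exact functor $\Hom_{\Z_3[SD_{16}]}(\Z_3(\chi),-)$, which is left exact and in fact the relevant $\mathrm{Ext}^{>0}$ vanish. Splicing the resulting four short exact sequences of $E_1$-columns and taking the cohomology of the (now short exact) sequence of total complexes gives the asserted long exact sequence, with the left end injective precisely because of the $G_{24}$-hypothesis.

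The main obstacle is the bookkeeping at the two ends of the complex and the interchange of cohomology with the inverse limit. For the inverse-limit issue, I would reduce to finite modules, where everything is exact, and then invoke that a countable (or cofiltered) inverse limit of short exact sequences of finite abelian groups is short exact, together with the observation from Remark~\ref{profinite} that the $E_2^{p,0}(M_i)$ are again profinite $\Z_3$-modules, so no $\lim^1$ pathology survives. For the end-of-complex bookkeeping, the cleanest route is to avoid the full spectral sequence of Corollary~\ref{SS} and work directly with the bounded projective complex $C_\bullet$: then $E_2^{p,0}(M)$ is literally $H^p$ of a four-term complex of profinite groups, the sequence of complexes $\Hom(C_\bullet,M_i)$ is short exact on the nose, and the only thing that needs the hypothesis is identifying $E_2^{0,0}$ as a subfunctor of $(-)^{G_{24}}$ and using $H^1(G_{24},M_1)=0$ to conclude injectivity of $E_2^{0,0}(M_1)\to E_2^{0,0}(M_2)$; the vanishing $E_1^{1,q}=E_1^{2,q}=0$ for $q>0$ from Corollary~\ref{SS} guarantees there are no higher contributions obstructing exactness in the middle.
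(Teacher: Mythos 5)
Your proposal contains the correct mechanism, but it is wrapped in two claims that are false and that contradict both each other and the correct part of your own argument. First, the opening assertion that ``each $C_p$ is projective, so $\Hom(C_p,-)$ is exact'' is wrong for $p=0,3$: there $C_p\cong\Z_3[[\GG_2^1/G_{24}]]$, and since $3$ divides $|G_{24}|$ this permutation module is not projective --- indeed $\Ext^q_{\Z_3[[\GG_2^1]]}(C_p,M)\cong H^q(G_{24},M)$ by Corollary \ref{SS}, which is nonzero in arbitrarily high degrees. This is precisely why the paper works with permutation resolutions rather than projective ones (Section 2 points out that $\Z_3$ admits no finite-length projective resolution over $\Z_3[[\GG_2^1]]$). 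If the $C_p$ were all projective, the lemma would hold with no hypothesis at all, which should have been a warning sign. Only $C_1=C_2$, induced from the $3'$-group $SD_{16}$, are projective. Second, your claim that the hypothesis $H^1(G_{24},M_1)=0$ is needed ``to show the map $E_2^{0,0}(M_1)\to E_2^{0,0}(M_2)$ is injective'' misplaces where the hypothesis enters: $E_2^{0,0}(-)$ is the kernel of a natural map of left-exact functors, so that injectivity is automatic. What actually fails without the hypothesis is surjectivity of $\Hom(C_p,M_2)\to\Hom(C_p,M_3)$ for $p=0,3$, i.e.\ of $M_2^{G_{24}}\to M_3^{G_{24}}$, whose cokernel injects into $H^1(G_{24},M_1)$. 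Without that surjectivity there is no short exact sequence of complexes, and the terms $E_2^{p,0}(M_3)$ in the asserted long exact sequence would be wrong.

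The correct argument --- which does appear, buried in the middle of your proposal --- is exactly the paper's: for $p=1,2$ the functor $\Hom(C_p,-)$ is exact because $\Z_3(\chi)$ is projective over $\Z_3[SD_{16}]$; for $p=0,3$ the sequence $0\to M_1^{G_{24}}\to M_2^{G_{24}}\to M_3^{G_{24}}\to 0$ is exact because the obstruction to surjectivity lies in $H^1(G_{24},M_1)=0$. Hence the sequence of Hom-complexes is short exact, and the long exact cohomology sequence, bounded because $C_\bullet$ has length $3$, is the one asserted. Your remarks on compatibility with inverse limits over finite quotients are fine and consistent with Remark \ref{profinite}. If you strip out the projectivity claim and relocate the use of the hypothesis from left-end injectivity to degreewise surjectivity, what remains is a correct proof along the paper's lines.
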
  

\begin{proof} In general the sequence of complexes 
$$
0\to \Hom_{\Z_3[[\GG_2^1]]}(C_{\bullet},M_1)
\to \Hom_{\Z_3[[\GG_2^1]]}(C_{\bullet},M_2)
\to \Hom_{\Z_3[[\GG_2^1]]}(C_{\bullet},M_3)\to 0 
$$ 
of profinite $\ZZ_3$-modules need not be exact; however, 
by Corollary \ref{SS}, the failure of exactness is exactly measured by
$H^1(G_{24},M_1)$. Therefore, if that group is zero,
then we do get an exact sequence of complexes, and the result
follows.
\end{proof}  

\begin{rem}\label{GHMRshort} 
By \cite{GHMR}, the resolution
$C_\bullet \to \ZZ_3$ of Theorem \ref{alg-res}  can be
promoted to a resolution 
of $\Et_\ast E_2^{h\GG_2^1}$ by twisted $\GG_2$-modules
\begin{equation}\label{e-res}
\begin{array}{ll}
\Et_\ast E_2^{h\GG_2^1} \to \Et_\ast E_2^{hG_{24}} \to&
\Et_\ast \Sigma^8E_2^{hSD_{16}}\\
\\
&\to \Et_\ast \Sigma^{8}E_2^{hSD_{16}} \to \Et_\ast
E_2^{hG_{24}} \to 0.
\end{array}
\end{equation}
We have $\Sigma^8E_2^{hSD_{16}}$ because $C_1$ is twisted
by a character. From  \S 5 of \cite{GHMR} we get the following
topological refinement: there is a sequence of maps between
spectra
\begin{align}\label{top-res}
E_2^{h\GG_2^1} \to E_2^{hG_{24}} \to
\Sigma^8E_2^{hSD_{16}}
& \to \Sigma^{40}E_2^{hSD_{16}} \to \Sigma^{48} E_2^{hG_{24}} 
\end{align}
realizing the resolution (\ref{e-res}) and
with the property that any two successive maps are null-homotopic
and all possible Toda brackets are zero modulo indeterminacy.
Note that there is an equivalence $\Sigma^8E_2^{hSD_{16}} \simeq 
\Sigma^{40}E_2^{hSD_{16}}$,
so that suspension is for symmetry only; however,
$$
\Sigma^{48} E_2^{hG_{24}} \not\simeq E_2^{hG_{24}}
$$
even though
$$
\Et_\ast \Sigma^{48} E_2^{hG_{24}} \cong \Et_\ast E_2^{hG_{24}}.
$$
This suspension is needed to make the Toda brackets vanish.
Because these Toda brackets vanish, the sequence of maps in the topological
complex (\ref{top-res}) further refines
to  a tower of fibrations
\begin{equation}\label{key-tower}
\xymatrix@R=15pt{
E_2^{h\GG_2^1} \rto & X_2\rto & X_1 \rto &E^{hG_{24}}\\
\Sigma^{45} E_2^{hG_{24}}\ar[u]&
\Sigma^{38}E_2^{hSD_{16}}\ar[u] & \Sigma^7E_2^{hSD_{16}}\ar[u]
}
\end{equation}
There is a similar tower for the sphere itself, using the resolution of
Remark \ref{full-grp}.
\end{rem}

\begin{rem}\label{alg-vs-top-ss} Let $\Sigma^{-p}F_p$ 
denote the successive
fibers in the tower (\ref{key-tower}); thus, for example,
$F_3 = \Sigma^{48}E_2^{hG_{24}}$. Then combining 
the descent spectral sequences for the groups $G_{24}$, 
$SD_{16}$ and $\GG_2^1$ with Corollary \ref{SS}
and the spectral sequence of the tower, we get a square of spectral
sequences 
\begin{equation}\label{diagram-of-ss}
\xymatrix{
E_1^{p,q}(\Et_tX)\ar@{=>}[d]_*+[o][F-]{1}  \ar@{=>}[r]^*+[o][F-]{2} &
H^{p+q}(\GG_2^1,\Et_tX) \ar@{=>}[d]^*+[o][F-]{3} \\
\pi_{t-q}L_{K(2)}(F_p\wedge X) \ar@{=>}[r]_*+[o][F-]{4}  &
\pi_{t-(p+q)} L_{K(2)}(E_2^{h\GG_2^1} \wedge X)\ .
}
\end{equation} 
We will use information about spectral sequences 
(1) and (2) to deduce information about spectral sequences (3) and (4). 
See Lemmas \ref{yet-to-be-pre} and \ref{yet-to-be-pre-k1}.

There is a similar square of spectral sequences where the lower right
corner becomes $\pi_\ast L_{K(2)}S^0$. This uses the resolution
of Remark \ref{full-grp} and the subsequent tower for the sphere.
\end{rem}

\section{The algebraic spectral sequences in the case of $(E_2)_*/(3)$}

Let $S/3$ denote the mod-$3$ Moore spectrum. 
Then, in the case of $(E_2)_*/(3)=(E_2)_\ast(S/3)$  
the spectral sequence  of Corollary \ref{SS} 
was completely worked out in \cite{HKM}.   We begin 
with some of the details.

First note that this is a spectral sequence  of modules over 
$H^*(\GG_2;(E_2)_*/(3))$. We will describe the
$E_1$-term as a module over the subalgebra
$$\FF_3[\b,v_1] \subseteq H^*(\GG_2;(E_2)_*/(3))
$$
where $\b\in H^2(\GG_2,(E_2)_{12}/(3))$ 
detects the image of the homotopy element 
$\b_1\in \pi_{10}S^0$ in $\pi_{10}(L_{K(2)}(S/3))$ and $v_1$
detects the image  of the homotopy element in $\pi_4(S/3)$
$$
\xymatrix{
S^4 \rto & \Sigma^4(S/3) \rto^-A &S/3
}
$$
of the inclusion of the bottom cell composed with the $v_1$-periodic map
constructed by Adams. 

In the next result, the element 
$\a$ of bidegree $(1,4)$ detects the image of the homotopy element 
$\a_1\in \pi_{3}S^0$  and the element 
$\widetilde{\a}$ of bidegree $(1,12)$ detects an element
in $\pi_{11}(L_{K(2)}(S/3))$ which maps to the image of $\beta_1$ in 
$\pi_{10}(L_{K(2)}S^0)$ under the 
pinch map $S/3 \to S^1$ to the top cell. For more 
details on these elements, as well as for the proof of the following theorem
we refer to \cite{HKM}.  We write 
$$
E_r^{p,q,t} = E_r^{p,q}((E_2)_t/3)
$$ 
for the $E_r$-term of the spectral sequence of Corollary \ref{SS}. For example, if
$p=0$ or $p=3$, then
$$
E_1^{p,\ast,t} = H^\ast(G_{24},(E_2)_t/3).
$$
By the calculations of  \cite{GHMR} \S 3, there is an invertible class $\Delta \in H^0(G_{24},
(E_2)_{24})$. We also write $\Delta$ for its image in $H^\ast(G_{24},(E_2)_t/3)$.
   
\begin{thm}\label{E1} There are isomorphisms of
$\FF_3[\beta,v_1]$-modules, with $\beta$ acting trivially on $E_1^{p,*,*}$ if $p=1,2$:
$$
E_1^{p,*,*}\cong 
\begin{cases}
\FF_3[[v_1^6\Delta^{-1}]][\Delta^{\pm 1},v_1,\beta,\alpha,\widetilde{\alpha}]
/(\alpha^2,\widetilde{\alpha}^2,v_1\alpha,v_1\widetilde{\alpha},
\alpha\widetilde{\alpha}+v_1\beta)e_p 
& p=0,3  \\
\\
\omega^2u^4\FF_3[[u_1^4]][u_1u^{-2},u^{\pm 8}]e_p  & p=1,2 \ .\\ 	
\end{cases}  
$$
\end{thm}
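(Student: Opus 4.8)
The plan is to split the argument according to the two shapes of the $E_1$-page recorded in Corollary \ref{SS} and to treat the columns $p\in\{1,2\}$ and $p\in\{0,3\}$ by completely different means.

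For $p=1,2$ the argument will be elementary linear algebra. By Corollary \ref{SS} the groups $E_1^{p,q,t}$ already vanish for $q>0$ --- this is part of that corollary, since $|SD_{16}|=16$ is prime to $3$ and $(E_2)_t/3$ is $3$-torsion, so the relevant $\Ext$ over $\Z_3[SD_{16}]$ sits in degree $0$ --- while $E_1^{p,0,t}=\Hom_{\Z_3[SD_{16}]}(\Z_3(\chi),(E_2)_t/3)$ is the submodule of $(E_2)_*/3\cong\FF_9[[u_1]][u^{\pm1}]$ on which both $\omega$ and $\phi$ act by $-1$. First I would record the $SD_{16}=\langle\omega,\phi\rangle$-action from \S 2: $\phi$ acts on $\FF_9$ by Frobenius and fixes $u$ and $u_1$, while $\omega\in\s_2$ acts $\FF_9$-linearly, scaling $u$ and $u_1$ by roots of unity. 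Then the equation $\omega m=-m$ selects the monomials $u_1^a u^b$ of one fixed weight modulo $8$; this forces the internal degree to be divisible by $4$ and, in each such degree, leaves a free rank-one module over $\FF_9[[u_1^4]]$. Imposing $\phi m=-m$ next cuts the $\FF_9$ of coefficients down to the $\FF_3$-line spanned by $\omega^2$, and re-indexing the surviving monomials yields precisely $\omega^2u^4\,\FF_3[[u_1^4]][u_1u^{-2},u^{\pm8}]e_p$; finally $\beta$, being of cohomological degree $2$, acts as $0$ on these columns simply because they are concentrated in $q=0$.

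For $p=0,3$ we must compute $H^*(G_{24},(E_2)_*/3)$. Since $C_3\le G_{24}$ is normal with quotient $Q_8$ of order prime to $3$, I would use the Lyndon--Hochschild--Serre spectral sequence: applied to the $3$-torsion module $(E_2)_*/3$ it collapses, so $H^n(G_{24},(E_2)_*/3)\cong H^n(C_3,(E_2)_*/3)^{Q_8}$. The cohomology of the cyclic group $C_3=\langle a\rangle$ is accessible directly --- $H^0$ is an invariant subring and $H^{>0}$ is $2$-periodic --- and one then forms the induced action of $Q_8$ and takes invariants. (Equivalently one can reduce the integral computation of $H^*(G_{24},(E_2)_*)$ from \S 3 of \cite{GHMR}, which provides the invertible class $\Delta\in H^0$ together with $\alpha$ and $\beta$, by the Bockstein long exact sequence of $0\to(E_2)_*\xrightarrow{3}(E_2)_*\to(E_2)_*/3\to0$.) The remaining task is to identify generators and relations: $\Delta^{\pm1}$, $v_1$, $\beta$ and $\alpha$ are visible directly, $\widetilde\alpha$ is the remaining generator of $H^1$ --- the ``exotic'' class produced by the Bockstein --- the relations $\alpha^2=\widetilde\alpha^2=v_1\alpha=v_1\widetilde\alpha=0$ and $\alpha\widetilde\alpha+v_1\beta=0$ are read off from the module structure (note they already force $v_1^2\beta=0$), and the completed coefficient ring $\FF_3[[v_1^6\Delta^{-1}]]$ reflects the $\goth m$-adic, equivalently $u_1$-adic, completeness of $(E_2)_*$, transported along the invariants.

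The hard part will be the $p=0,3$ column. Even granting the collapse of the LHS spectral sequence, pinning down the multiplicative structure --- in particular producing $\widetilde\alpha$, establishing $\alpha\widetilde\alpha=-v_1\beta$, and determining exactly which powers of $v_1$ survive and in what completed form --- is a delicate piece of bookkeeping, and one must also check that every identification is compatible with the overall $\FF_3[\beta,v_1]$-module structure. This is precisely the computation carried out in \cite{HKM} (building on \cite{GHMR}), which is why we invoke it here rather than reproduce it.
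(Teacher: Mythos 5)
Your proposal is correct and is essentially the paper's own treatment: the paper gives no proof of Theorem \ref{E1} but simply refers to \cite{HKM} (with the underlying $G_{24}$- and $SD_{16}$-computations coming from \S 3 of \cite{GHMR}), and your outline --- the $\chi$-isotypic eigenspace computation for $p=1,2$, and for $p=0,3$ the collapse of the Lyndon--Hochschild--Serre spectral sequence for $C_3\trianglelefteq G_{24}$ with $Q_8$-invariants together with the Bockstein comparison to the integral calculation of Remark \ref{integral-calc} --- accurately describes how that reference proceeds. Since you, like the paper, ultimately defer the delicate identification of $\widetilde\alpha$ and the relations to \cite{HKM}, there is nothing further to reconcile.
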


\begin{rem}\label{explain-notation} The module generators
$e_p$ are of tridegree $(p,0,0)$.  If $p=0$ or $p=3$, then $E_1^{p,0,*}$
is isomorphic to a completion of the ring
of mod-$3$  modular forms for smooth elliptic curves. 
Indeed, by Deligne's calculations \cite{Deligne} \S 6, the
ring of modular forms is $\FF_3[b_2,\Delta^{\pm 1}]$ where 
$b_2$ is the Hasse invariant and $\Delta$ is the discriminant. The Hasse
invariant of an elliptic curve can be computed as $v_1$ of the
associated formal group, so we can write $b_2=v_1$. 

If $p=1$ or $p=2$, we have written $E_1^{p,0,*}$ as 
a submodule of $(E_2)_\ast/(3) = \FF_9[[u_1]][u^{\pm 1}]$. 
Recall that there is a $3$-typical choice for the universal deformation
of the Honda formal group $\Gamma_2$ with
$v_1 = u_1u^{\pm 2}$ and $v_2 = u^{-8}$. 
\end{rem}

All differentials in the spectral sequence of Corollary \ref{SS}
with $M = (E_2)_\ast/(3)$ are $v_1$-linear. In particular, $d_1$ is 
determined by continuity and the following formulae 
established in \cite{HKM}. 

\begin{thm}\label{E1diff} 
There are elements 
$$
\begin{array}{llll}
\Delta_k\in E_1^{0,0,24k},&  b_{2k+1}\in E_1^{1,0,16k+8}, 
& \overline{b}_{2k+1}\in E_1^{2,0,16k+8}, 
& \overline{\Delta}_k\in E_1^{3,0,24k} 
\end{array}
$$ 
for each $k\in \Z$ satisfying   
$$
\begin{array}{llll} 
\Delta_k\equiv\Delta^ke_0, & b_{2k+1}\equiv \omega^2u^{-4(2k+1)}e_1, 
& \overline{b}_{2k+1}\equiv \omega^2u^{-4(2k+1)}e_2, 
& \overline{\Delta}_k\equiv\Delta^ke_3
\end{array}
$$  
(where the congruences are modulo the ideal 
$(v_1^6\Delta^{-1})$ resp. $(v_1^4u^8)$ and in the case of $\Delta_0$ we 
even 
have equality $\Delta_0=\Delta^0e_0=e_0$)
such that  
$$
\begin{array}{rcl}
d_1(\Delta_k)&=&  
\begin{cases}
(-1)^{m+1}b_{2.(3m+1)+1}                  &k=2m+1, m \in \ZZ \\
(-1)^{m+1}mv_1^{4.3^n-2}b_{2.3^n(3m-1)+1} &k=2m.3^n, m \in \ZZ, 
m\not\equiv 0\bmod(3), n\geq 0\\ 
0                                         &k=0  \\
\end{cases}   \\
&&\\
d_1(b_{2k+1})&=&  
\begin{cases}
(-1)^nv_1^{6.3^{n}+2}\overline{b}_{3^{n+1}(6m+1)}
&\ \ \ \ \ \ \ k=3^{n+1}(3m+1), m \in \ZZ,n\geq 0 \\
(-1)^nv_1^{10.3^n+2}\overline{b}_{3^n(18m+11)}   
&\ \ \ \ \ \ \ k=3^{n}(9m+8), m \in \ZZ,n\geq 0  \\ 
0                                                
&\ \ \ \ \ \ \ \textrm{else}\\
\end{cases}  \\
&&\\  
d_1(\overline{b}_{2k+1})&=&
\begin{cases}
(-1)^{m+1}v_1^{2}\overline{\Delta}_{2m}             & 2k+1=6m+1, m \in \ZZ \\
(-1)^{m+n}v_1^{4.3^n}\overline{\Delta}_{3^{n}(6m+5)}& 2k+1=3^{n}(18m+17),
 m \in \ZZ,n\geq 0 \\
(-1)^{m+n+1}v_1^{4.3^n}\overline{\Delta}_{3^{n}(6m+1)}&
2k+1=3^{n}(18m+5), m \in \ZZ,n\geq 0\\
0                                         & \textrm{else}  \ .
\end{cases} 
\end{array} 
$$ 
\end{thm}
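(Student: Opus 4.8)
The plan is to extract all three families of $d_1$-differentials from the linearity and continuity properties of the spectral sequence, to identify $d_1$ explicitly through the algebraic and topological realizations of the resolution of Theorem~\ref{alg-res}, and to handle the intricate $v_1$-power bookkeeping by bootstrapping off the much simpler $v_1$-inverted computation. First I would reduce the problem to a finite amount of data. As noted above, the spectral sequence of Corollary~\ref{SS} with $M=(E_2)_\ast/3$ is a spectral sequence of modules over $H^\ast(\GG_2;(E_2)_\ast/3)$, so $d_1$ is linear over this ring; in particular it is $v_1$-linear, $\beta$-linear, and commutes with multiplication by $\alpha$ and $\widetilde\alpha$. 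Combined with the $\FF_3[\beta,v_1]$-module description of the $E_1$-term in Theorem~\ref{E1} and with continuity (passing to the finite quotients of the profinite structure of Remark~\ref{profinite}), this reduces the computation to determining $d_1$ on the module generators $\Delta_k$, $b_{2k+1}$ and $\overline b_{2k+1}$, modulo the completion ideals $(v_1^6\Delta^{-1})$ resp. $(v_1^4u^8)$; the classes $\overline\Delta_k$ support no $d_1$ since $E_1^{4,\ast,\ast}=0$. The relations $v_1\alpha=v_1\widetilde\alpha=\alpha^2=\widetilde\alpha^2=0$, $\alpha\widetilde\alpha+v_1\beta=0$ on the $p=0,3$ lines and the vanishing of $\beta$ on the $p=1,2$ lines then further constrain the possible shape of the answers.

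Next I would identify $d_1$ through the resolution. Algebraically, $d_1\colon E_1^{p,q}\to E_1^{p+1,q}$ is the map on $\Ext^q_{\ZZ_3[[\GG_2^1]]}(-,(E_2)_\ast/3)$ induced by the boundary maps $C_{p+1}\to C_p$ of Theorem~\ref{alg-res}; equivalently, by the square of spectral sequences of Remark~\ref{alg-vs-top-ss} together with the topological refinement of Remark~\ref{GHMRshort}, it is the map induced on the $E_2$-pages of the three (mod $3$) descent spectral sequences by the maps of spectra $E_2^{hG_{24}}\to\Sigma^8E_2^{hSD_{16}}$, $\Sigma^8E_2^{hSD_{16}}\to\Sigma^{40}E_2^{hSD_{16}}$ and $\Sigma^{40}E_2^{hSD_{16}}\to\Sigma^{48}E_2^{hG_{24}}$, whose effect on homotopy groups (hence on these $E_2$-pages) was worked out in \cite{GHMR}. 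Under the identifications $E_1^{p,q,t}\cong H^q(G_{24},(E_2)_t/3)$ for $p\in\{0,3\}$ and $E_1^{p,q,t}\cong\Hom_{\ZZ_3[SD_{16}]}(\ZZ_3(\chi),(E_2)_t/3)$ for $p\in\{1,2\}$, one then reads off $d_1$ on $\Delta_k$, $b_{2k+1}$, $\overline b_{2k+1}$ by transporting the integral classes $\Delta^k\in H^0(G_{24},(E_2)_{24k})$ and the corresponding generators of the $SD_{16}$-lines through these maps and reducing mod $3$.

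The delicate point is the $v_1$-exponent and $3$-adic bookkeeping that produces the parameter $n$ and the powers $v_1^{4\cdot3^n-2}$, $v_1^{6\cdot3^n+2}$, $v_1^{4\cdot3^n}$ in the statement. I would first pin down the \emph{coarse} pattern --- which $\Delta_k$ has nonzero differential and which $b$ it hits, and similarly along the other two lines --- by inverting $v_1$: there $(E_2)_\ast/3[v_1^{-1}]$ and its cohomology degenerate to the elementary $v_1$-periodic ($K(1)$-local) picture, the localized differential $\pm v_1^Nb$ becomes $\pm b$ up to a unit, and no information about this pattern is lost because $\Delta_k$, $b_{2k+1}$, $\overline b_{2k+1}$ are $v_1$-torsion-free. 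The precise $v_1$-power is then obtained by expanding the chosen representative $\Delta_k\equiv\Delta^ke_0$ modulo $(v_1^6\Delta^{-1})$ in the completed ring of Theorem~\ref{E1} and locating the $v_1$-adic depth at which its image under the relevant boundary map first becomes nonzero; this forces an induction on the $3$-adic valuation of the index. The signs and the residual ambiguities are fixed by $d_1^2=0$, which links the three families, by a few low-degree checks, and by requiring consistency of the complex $(E_1^{\bullet,0,\ast},d_1)$ with the total cohomology $H^\ast(\GG_2^1,(E_2)_\ast/3)$, and (after the promotion of Remark~\ref{full-grp}) with the homotopy of $L_{K(2)}(S/3)$.

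I expect this last step to be the main obstacle: because the resolution of Theorem~\ref{alg-res} is not given by explicit matrices, the boundary maps cannot simply be ``computed'', so the $v_1$-exponents must be teased out indirectly by an induction on the $3$-adic valuation that carefully synchronizes the $v_1$-inverted input with the completed module structure of the $E_1$-term. A secondary subtlety, easy to underestimate, is the choice of the generators $\Delta_k$, $b_{2k+1}$, $\overline b_{2k+1}$ themselves: being well-defined only modulo the completion ideals, only a coherent normalization produces the clean closed forms stated --- a naive choice yields correct but essentially unusable formulae.
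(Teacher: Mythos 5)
The first thing to say is that the paper contains no proof of this theorem to compare yours against: it is imported from \cite{HKM}, and the surrounding text says so explicitly (``for the proof of the following theorem we refer to \cite{HKM}''; the only in-paper commentary is the remark that all differentials are $v_1$-linear and that $d_1$ is determined by continuity and these formulae). So the honest comparison is with the strategy of \cite{HKM}, and against that your proposal is a reasonable sketch of the general shape --- $v_1$-linearity and continuity reduce everything to the generators $\Delta_k$, $b_{2k+1}$, $\overline b_{2k+1}$; $d_1$ is induced by the boundary maps of the resolution of Theorem \ref{alg-res}; the $v_1$-inverted picture of Proposition \ref{local-e2s} serves as a constraint --- but it is a strategy outline rather than a proof, and the gap sits exactly where you yourself locate it. The boundary maps $C_{p+1}\to C_p$ cannot be ``read off'' from \cite{GHMR}: that paper constructs them abstractly, and the entire technical content of the theorem is the explicit determination of these maps (as elements of the completed group ring, to a sufficiently deep level of approximation) together with the computation of their action on $(E_2)_*/3$. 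Your outline attributes this input to \cite{GHMR}, where it does not exist, and otherwise defers it to an unspecified ``induction on the $3$-adic valuation.''

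Two specific steps are also too optimistic. First, inverting $v_1$ does not recover the ``coarse pattern'' of which generator hits which: since $v_1^{-1}H^*(\GG_2^1,(E_2)_*/3)\cong \FF_3[v_1^{\pm1}]\otimes\Lambda(v_1^{-1}b_1)$ is tiny, localization only tells you that almost every class must support or receive a nonzero differential. In a fixed internal degree the possible targets of $d_1(\Delta_k)$ are the classes $v_1^N b_{2j+1}$ along a one-parameter family, so ``which $b$ it hits'' and ``what is the leading $v_1$-exponent'' are the same question --- precisely the question that inverting $v_1$ destroys, and precisely what the $E_2$-term (e.g.\ Corollary \ref{whats-needed}) depends on. Second, fixing the exponents and signs by ``consistency with the homotopy of $L_{K(2)}(S/3)$'' is circular: that homotopy is computed in \cite{HKM} \emph{from} these formulae. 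The constraints $d_1^2=0$ and a few low-degree checks are genuinely useful but nowhere near sufficient to pin down the three-way case division by $3$-adic valuation and the exponents $4\cdot3^n-2$, $6\cdot3^n+2$, $10\cdot 3^n+2$, $4\cdot3^n$.
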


We will actually only need the following consequence of these results,
which follows after a little bookkeeping.

\begin{cor}\label{whats-needed} There is an isomorphism
\begin{equation*}
E_2^{p,0}((E_2)_0/(3))\cong 
\begin{cases} 
\F_3 & p=0,3 \\ 
0    & p=1,2 \ . \\
\end{cases} 
\end{equation*}
\end{cor}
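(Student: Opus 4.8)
The plan is to compute the cohomology of the four-term complex $E_1^{\bullet,0,0}$ extracted from the $E_1$-page of Corollary~\ref{SS} by restricting to cohomological degree $q=0$ and internal degree $t=0$ (recall $E_r^{p,q,t}$ denotes $E_r^{p,q}((E_2)_t/3)$, so $E_2^{p,0}((E_2)_0/(3))=E_2^{p,0,0}$); I would write that complex down explicitly from Theorems~\ref{E1} and~\ref{E1diff} and read off its cohomology. First I would identify the four terms. Restricting Theorem~\ref{E1} to $q=0$ means, for $p=0,3$, discarding $\a$, $\widetilde{\a}$ and $\b$ (for $p=1,2$ the $E_1$-term already lives in $q=0$); then imposing $t=0$ and using that $v_1^6\Delta^{-1}$ and $u_1^4$ are exactly the internal-degree-zero completion variables while $v_1$, $\Delta$ and $u$ all have nonzero internal degree, one finds that $E_1^{0,0,0}$ and $E_1^{3,0,0}$ are each a copy of $\F_3[[v_1^6\Delta^{-1}]]$ with topological $\F_3$-bases $\{v_1^{6n}\Delta_{-n}\}_{n\ge 0}$ and $\{v_1^{6n}\overline{\Delta}_{-n}\}_{n\ge 0}$, and that $E_1^{1,0,0}$ and $E_1^{2,0,0}$ are each a copy of $\F_3[[u_1^4]]$ with topological $\F_3$-bases $\{v_1^{4l-2}b_{1-2l}\}_{l\ge 1}$ and $\{v_1^{4l-2}\overline{b}_{1-2l}\}_{l\ge 1}$ (the $v_1$-exponents are forced by $t=0$). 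All four terms are infinite-dimensional profinite $\F_3$-vector spaces; the bottom basis contains $e_0=\Delta_0$ and the top basis contains $e_3=\overline{\Delta}_0$, both $d_1$-cycles.

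Next I would compute $d_1$. Every differential is $v_1$-linear and, by Theorem~\ref{E1diff}, is a single monomial on each of $\Delta_k$, $b_{2k+1}$, $\overline{b}_{2k+1}$, so in $E_1^{\bullet,0,0}$ each basis vector maps to a unit multiple of a single basis vector of the next term or to zero, and the cohomology is governed entirely by which index sets arise as images. A short calculation with the $d_1(\Delta_k)$ formula shows that $d_1^0$ carries $v_1^{6n}\Delta_{-n}$ ($n\ge 1$) to $\pm v_1^{4l-2}b_{1-2l}$ with $l=(3n+1)/2$ for $n$ odd and $l=3^s(3|m|+1)$ when $n=2|m|3^s$, $|m|\not\equiv 0\bmod 3$; that $l$ depends injectively on $n$; and that its image index set is the set $L$ of $l\ge 1$ with either $l\equiv 2\bmod 3$ or $l/3^{v_3(l)}\equiv 4$ or $7\bmod 9$. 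The $d_1(b_{2k+1})$ formula with $k=-l$ then shows that $d_1^1$ kills $v_1^{4l-2}b_{1-2l}$ exactly when $l\in L$, and otherwise sends it injectively (up to sign) to a single basis vector of $E_1^{2,0,0}$. Hence $\ker d_1^0=\F_3\langle e_0\rangle$ and $\ker d_1^1=\mathrm{im}\,d_1^0$, so $E_2^{0,0,0}\cong\F_3$ and $E_2^{1,0,0}=0$. An entirely analogous analysis of $d_1^2$ via the $d_1(\overline{b}_{2k+1})$ formula gives $\ker d_1^2=\mathrm{im}\,d_1^1$ and $\mathrm{coker}\,d_1^2=\F_3\langle e_3\rangle$, i.e. $E_2^{2,0,0}=0$ and $E_2^{3,0,0}\cong\F_3$ — one could also hope to halve the work using the visible symmetry $C_p\cong C_{3-p}$ of the resolution of Theorem~\ref{alg-res}, but the second half is no harder than the first.

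The only genuine content — everything else being bookkeeping — is the matching statement: that the image index set $L$ of $d_1^0$ coincides exactly with the vanishing locus of $d_1^1$, and likewise for $d_1^1$ and $d_1^2$; this is what produces exactness in the middle. I expect this to be the main obstacle, since the formulas of Theorem~\ref{E1diff} split into several cases indexed by $3$-adic valuations and one must check that they mesh — the delicate points being the $k=0$ clause of $d_1(\Delta_k)$, the separation of residues $\equiv 1\bmod 3$ from $\equiv 1\bmod 9$, and the fact that the explicit powers $v_1^{4\cdot 3^n-2}$, $v_1^{6\cdot 3^n+2}$, $v_1^{10\cdot 3^n+2}$ appearing in the differentials are automatically exactly those needed to keep images in internal degree $0$ once one tracks internal degrees. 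No single step is hard, but this is where a slip would go unnoticed. Finally $(E_2)_0/(3)$ is killed by $3$, so each $E_2^{p,0}$ here is a profinite $\F_3$-vector space (Remark~\ref{profinite}), and the computation above identifies it with $\F_3$ for $p=0,3$ and with $0$ for $p=1,2$.
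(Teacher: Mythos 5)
Your proposal is correct and is exactly the paper's intended argument: the paper states that Corollary~\ref{whats-needed} ``follows after a little bookkeeping'' from Theorems~\ref{E1} and~\ref{E1diff}, and your computation of the degree-zero complex $E_1^{\bullet,0,0}$ and its monomial $d_1$ is precisely that bookkeeping, with the index-set matching (your set $L$) worked out correctly. You have also correctly identified where the only real content lies, namely that the image index set of each $d_1$ coincides with the vanishing locus of the next.
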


\begin{rem}\label{integral-calc}
We also record here the integral calculation $H^\ast(G_{24},(E_2)_\ast)$
from \cite{GHMR}; we will use this in Proposition \ref{k1-calcs}.
There are elements $c_4$, $c_6$ and $\Delta$ in
$H^0(G_{24},(E_2)_\ast)$ of internal degrees $8$, $12$ and $24$
respectively. The element $\Delta$ is invertible and there is a relation
$$
c_4^3 - c_6^2 = (12)^3\Delta\ .
$$
Define $j=c_4^3/\Delta$ and let $M_\ast$ be the graded ring
$$
M_\ast = \ZZ_3[[j]][c_4,c_6,\Delta^{\pm 1}]/(c_4^3 - c_6^2 = (12)^3\Delta,
\Delta j = c_4^3).
$$
There are also elements $\alpha \in H^1(G_{24},(E_2)_4)$ and
$\beta \in H^2(G_{24},(E_2)_{12})$ which reduce to the restriction 
(from $\GG_2^1$ to $G_{24}$) of the elements of the
same name in Theorem \ref{E1}. There are relations
\begin{eqnarray}\label{mod-rels}
3\alpha = 3\beta = \alpha^2 &=0\  \nonumber\\
c_4 \alpha = c_4\beta &=0\ \\
c_6 \alpha = c_6 \beta &=0. \nonumber
\end{eqnarray}
Finally
$$
H^\ast (G_{24},(E_2)_\ast ) = M_\ast[\alpha,\beta]/R
$$
where $R$ is the ideal of relations given by (\ref{mod-rels}). The element $\Delta$ has
already appeared in Theorem \ref{E1}. Modulo $3$, $c_4 \equiv v_1^2$ and $c_6 \equiv v_1^3$
up to a unit in $H^0(G_{24},(E_2)_0/3) = \FF_3[[j]]$. Compare \cite{GHMV1}, Proposition 7.
\end{rem}

\section{The rational calculation}

The purpose of this section is to give enough qualitative information
about the integral calculation of $H^\ast(\GG_2,(E_2)_\ast)$ 
in order to prove Theorem \ref{maintheorem}. Much of this is more
refined than we actually need, but of interest in its own 
right.

The following result implies that the rational homotopy will all arise
from $H^\ast(\GG_2,(E_2)_0)$.

\begin{prop}\label{rational}
a) Suppose $t=4.3^km$ with $m\not\equiv 0\ \text{mod}\ 
(3)$.   Then $3^{k+1}H^*(\GG_2,(E_2)_{t})=0$. 

b)
Suppose $t$ is not divisible by $4$. Then $H^\ast(\GG_2,(E_2)_t)=0$.
\end{prop}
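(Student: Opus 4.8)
The plan is to deduce this integral statement from the mod-$3$ information already assembled, using the exotic-free resolution and a Bockstein/continuity argument. First I would set up the two algebraic spectral sequences of Remark \ref{alg-vs-top-ss} in the form $E_1^{p,q}(\Et_t X)\Rightarrow H^{p+q}(\GG_2^1,\Et_t X)$, together with the spectral sequence of Corollary \ref{SS} for the full group $\GG_2$ obtained via Remark \ref{full-grp}; since $\GG_2\cong\GG_2^1\times\ZZ_3$ it suffices by a K\"unneth argument to control $H^*(\GG_2^1,(E_2)_t)$. The key input is Theorem \ref{E1} and Corollary \ref{whats-needed}, which handle the case $t\equiv 0$: there $E_2^{p,0}((E_2)_0/3)$ is $\FF_3$ for $p=0,3$ and $0$ for $p=1,2$, and the higher-$q$ contributions are killed (this is exactly the "little bookkeeping" alluded to). I would leverage the fact, recorded in Remark \ref{profinite}, that a profinite $\ZZ_3$-module $M$ with $M/3M=0$ vanishes; combined with the long exact sequence in Lemma \ref{when-exact} applied to $0\to (E_2)_t\xrightarrow{3}(E_2)_t\to (E_2)_t/3\to 0$, this converts mod-$3$ vanishing or finiteness into integral statements about $E_2^{p,0}((E_2)_t)$.

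For part (b), when $4\nmid t$ the point is that $(E_2)_t/3$ has no invariants under the relevant finite subgroups: by Remark \ref{explain-notation} the ring $E_1^{0,0,*}=E_1^{3,0,*}$ is concentrated in degrees divisible by $8$ (powers of $\Delta$ times powers of $v_1=u_1u^{-2}$, $c_4\equiv v_1^2$, etc., all in degrees $\equiv 0\bmod 8$), while $E_1^{1,0,*}=E_1^{2,0,*}$ lives in degrees $\equiv 8\bmod 16$, hence again $\equiv 0\bmod 8$; in any case everything is concentrated in degrees divisible by $4$. So $\Et_t/3$ has trivial cohomology for the building-block subgroups in those degrees, forcing $H^*(\GG_2^1,\Et_t/3)=0$ and then, by the Bockstein long exact sequence and the profinite vanishing criterion, $H^*(\GG_2^1,(E_2)_t)=0$, whence $H^*(\GG_2,(E_2)_t)=0$. (One must check the $\GG_2$ versus $\GG_2^1$ passage doesn't reintroduce bad degrees; the extra $\ZZ_3$-factor only shifts cohomological degree, not internal degree, so this is clean.)

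For part (a), with $t=4\cdot 3^k m$, $m\not\equiv 0\bmod 3$, I would run the same machine but now track the exponent of $3$. The claim is that $(E_2)_t$ has cohomology annihilated by $3^{k+1}$. I expect this to follow by induction on $k$: the case $k=0$ asserts $3$-torsion, which should come from the fact that in degree $t=4m$ with $m$ prime to $3$ the invariants $H^0$ and $H^3$ are $v_1$-divisible classes whose integral lifts (the modular forms $c_4^a c_6^b\Delta^c$) exist only with the stated divisibility — more precisely, the reduction map $H^*(\GG_2^1,(E_2)_t)\to H^*(\GG_2^1,(E_2)_t/3)$ and the differentials of Theorem \ref{E1diff}, which carry explicit powers $v_1^{4\cdot 3^n-2}$, etc., control exactly how deep into the $v_1$-Bockstein tower a class survives. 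The inductive step would use the $v_1$-linearity of all differentials (stated just before Theorem \ref{E1diff}) together with the $3$-adic valuation bookkeeping in the $d_1$-formulas: a class $\Delta_k$ with $k=2m\cdot 3^n$ supports a differential scaled by $v_1^{4\cdot 3^n-2}$, and this power of $v_1$ is what measures the $3$-divisibility via the relation between $v_1$-towers and multiplication by $3$ in the integral cohomology.

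The main obstacle will be part (a): turning the mod-$3$ differential formulas of Theorem \ref{E1diff} into a sharp bound $3^{k+1}$ on the integral torsion requires carefully matching the $v_1$-adic filtration of the $E_1$-term with the $3$-adic filtration of the integral cohomology, i.e. understanding precisely how $v_1$-periodicity in $H^*(\GG_2^1,(E_2)_*/3)$ assembles into the $\ZZ_3$-module structure of $H^*(\GG_2^1,(E_2)_*)$. I would handle this via the standard algebraic $v_1$-Bockstein spectral sequence (or the exact couple associated to the tower $(E_2)_*\xrightarrow{3}(E_2)_*$), reading off that a $v_1^j$-divisible-but-not-$v_1^{j+1}$-divisible permanent cycle contributes a $\ZZ/3^{v(j)+1}$ where $v$ is the relevant $3$-adic valuation coming from the exponents $4\cdot 3^n-2$, $6\cdot 3^n+2$, $4\cdot 3^n$ appearing in Theorem \ref{E1diff}; one then checks these valuations are exactly $k+1$ in internal degree $4\cdot 3^k m$. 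Everything else — parts of the bookkeeping for (b), the K\"unneth passage to $\GG_2$, the profinite vanishing — is routine given the results already in the excerpt.
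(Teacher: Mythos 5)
Your part (b) is correct, though much heavier than necessary: you get $H^*(\GG_2^1,(E_2)_t/3)=0$ for $4\nmid t$ from the internal degrees in Theorem \ref{E1}, then pass to the integral statement by the Bockstein sequence and the fact that a profinite $\ZZ_3$-module $M$ with $M/3M=0$ vanishes, and finally to $\GG_2$ by the Lyndon--Hochschild--Serre spectral sequence. The paper does this in one line by restricting to the central subgroup $\{\pm 1\}\subset \ZZ_3^\times\subset\GG_2$, which acts on $u^{t/2}$ by $(-1)^{t/2}$, so that $H^q(\{\pm1\},(E_2)_t)=0$ unless $4\mid t$.

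Part (a) of your proposal has a genuine gap, and the way you have set it up it cannot be closed with the tools in this paper. First, the bound $3^{k+1}$ is a statement about $\GG_2$, not $\GG_2^1$, and its entire source is the \emph{nontrivial} action of the central $\ZZ_3\cong 1+3\ZZ_3$ on $(E_2)_t$ for $t\neq 0$: a topological generator $\psi\equiv 4 \bmod 9$ acts on $u^{t/2}$ by $\psi^{t/2}$, and for $t=4\cdot 3^k m$ with $3\nmid m$ one has $\psi^{t/2}-1 = 2\cdot 3^{k+1}m\cdot(\text{unit}) \bmod 3^{k+2}$, so $H^0(\ZZ_3,(E_2)_t)=0$ and $3^{k+1}H^1(\ZZ_3,(E_2)_t)=0$; feeding this into $H^p(\GG_2^1,H^q(\ZZ_3,(E_2)_t))\Rightarrow H^{p+q}(\GG_2,(E_2)_t)$ gives the claim immediately. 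Your parenthetical remark that the extra $\ZZ_3$-factor ``only shifts cohomological degree, not internal degree'' is exactly where this is lost: that is true only for $t=0$, and for $t\neq 0$ there is no K\"unneth reduction to $\GG_2^1$. (Indeed, $3^{k+1}H^*(\GG_2^1,(E_2)_t)=0$ is not something you should expect to hold.) Second, even taken on its own terms, your plan to extract the integral torsion exponent from the mod-$3$ differentials of Theorem \ref{E1diff} via a $v_1$-Bockstein heuristic (``a $v_1^j$-divisible permanent cycle contributes $\ZZ/3^{v(j)+1}$'') is not a theorem and is not justified by anything cited: the mod-$3$ algebraic spectral sequence determines $H^*(\GG_2^1,(E_2)_*/3)$, but recovering the $3$-adic filtration of the integral cohomology requires all higher Bockstein differentials, which are computed nowhere in this paper. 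You correctly identified this as the main obstacle; the resolution is not to do that computation but to abandon the resolution-based approach for part (a) entirely and use the character by which the center acts.
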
   

\begin{proof} Part (b) is the usual sparseness for the Adams-Novikov Spectral
Sequence. We can prove this here by considering the spectral sequence  
$$
H^p(\GG_2/\{\pm 1\},H^q(\{\pm 1\},(E_2)_t) \Longrightarrow
H^{p+q}(\GG_2,(E_2)_t)
$$
given by the inclusion of the central subgroup 
$\{\pm 1\}\subset\Z_3^{\times}\subset\GG_2$. 
The central subgroup $\Z_3^\times$  acts trivially
on $(E_2)_0$ and  by multiplication on
$u$;  that is, if $g\in \Z_3^{\times}$   then $g_*(u)=gu$. 
In particular  we find 
$$
H^q(\{\pm 1\},(E_2)_t)  = 0
$$
unless $t$ is a non-zero multiple of 4 and $q=0$. From this (b) follows.

For (a) we use the spectral sequence
$$
H^p(\GG_2^1,H^q(\ZZ_3,(E_2)_t)) \Longrightarrow
H^{p+q}(\GG_2,(E_2)_t)
$$
If $\psi \in \ZZ_3$ is a topological generator,
then $\psi\equiv 4$ modulo $9$. In particular, 
$$
\psi(u^{t/2}) = (1 + 2.3^{k+1}m)u^{t/2}\qquad \mathrm{mod}\ (3^{k+2})
$$
and we have that
$H^q(\ZZ_3,(E_2)_t) = 0$ unless $q=1$ and
$$
3^{k+1}H^1(\ZZ_3^\times,(E_2)_t)=0.
$$
Then (a) follows.
\end{proof}

It's not possible to be quite so precise in the case of $\GG_2^1$. However,
we do have the following result.

\begin{prop}\label{rational-bis} Suppose $s > 3$ or $t$ is not divisible by $4$.
Then
$$
H^s(\GG_2^1,E_t) \otimes \QQ = 0.
$$
\end{prop}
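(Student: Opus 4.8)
The plan is to treat the two cases of the hypothesis by two independent, essentially formal arguments: for $4\nmid t$ we use the central subgroup $\{\pm 1\}$, and for $s>3$ we use the length-$3$ resolution of Theorem~\ref{alg-res} in the form of the spectral sequence of Corollary~\ref{SS}.

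First suppose $4\nmid t$; since $(E_2)_t=0$ for $t$ odd we may assume $t\equiv 2\bmod 4$. The finite subgroup $\{\pm 1\}\subseteq \Z_3^\times=Z(\GG_2)$ is contained in $\GG_2^1$ by Remark~\ref{split-g2} and is central there. Now $-1$ fixes $(E_2)_0$ and sends $u\mapsto -u$, so it acts on $(E_2)_t=(E_2)_0\cdot u^{-t/2}$ by multiplication by $(-1)^{t/2}=-1$. Since $(E_2)_t$ is a torsion-free $\Z_3$-module on which $-1$ acts by $-1$, we get $H^0(\{\pm 1\},(E_2)_t)=0$, and as $2$ is a unit in $\Z_3$ we also get $H^q(\{\pm 1\},(E_2)_t)=0$ for $q>0$. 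The Lyndon--Hochschild--Serre spectral sequence for the extension $1\to\{\pm 1\}\to\GG_2^1\to\GG_2^1/\{\pm 1\}\to 1$ then shows $H^s(\GG_2^1,(E_2)_t)=0$ for all $s$, which is more than enough.

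Now suppose $s>3$. We apply the exact functor $-\otimes\QQ$ to the spectral sequence of Corollary~\ref{SS} with $M=(E_2)_t$, obtaining
$$
E_1^{p,q}\otimes\QQ\Longrightarrow H^{p+q}(\GG_2^1,(E_2)_t)\otimes\QQ\ .
$$
This is harmless because $E_1^{p,q}=0$ unless $0\le p\le 3$, so the abutment filtration is finite in each total degree. By Corollary~\ref{SS}, the terms $E_1^{1,q}$ and $E_1^{2,q}$ vanish for $q>0$, while for $q>0$ the term $E_1^{0,q}=E_1^{3,q}=H^q(G_{24},(E_2)_t)$ is annihilated by $|G_{24}|=24$ and is therefore a torsion group; hence $E_1^{p,q}\otimes\QQ=0$ whenever $q>0$. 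For $s>3$, every potentially nonzero $E_\infty^{p,s-p}$ has $0\le p\le 3$ and so $q=s-p\ge 1$; thus $E_\infty^{p,s-p}\otimes\QQ=0$ and $H^s(\GG_2^1,(E_2)_t)\otimes\QQ=0$.

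I do not expect any real obstacle here: granting the finite resolution $C_\bullet$, the content is simply that positive-degree cohomology of the finite subgroups is torsion, so in cohomological degree $s>3$ the group $H^s(\GG_2^1,(E_2)_t)$ is a bounded extension of torsion groups. The one point that wants care is the compatibility of $-\otimes\QQ$ with the spectral sequence, and this is unproblematic exactly because $C_\bullet$ has length $3$. Note that, in contrast with Proposition~\ref{rational}, the argument yields only rational vanishing and no integral torsion bound, since the exponents of the groups $E_1^{p,q}$ are not controlled.
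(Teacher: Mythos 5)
Your proof is correct. For the case $s>3$ it is essentially the paper's own argument: tensor the spectral sequence of Corollary~\ref{SS} with $\QQ$ and observe that all contributions in total degree $>3$ come from $E_1^{p,q}$ with $q>0$, which are torsion (positive-degree cohomology of the finite groups $G_{24}$ and $SD_{16}$). For the case $4\nmid t$, however, you take a different route: the paper stays inside the same spectral sequence and notes that the $q=0$ lines also vanish rationally, because $H^0(G_{24},E_t)\otimes\QQ=0$ and $\Hom_{\Z_3[SD_{16}]}(\Z_3(\chi),E_t)\otimes\QQ=0$ when $t$ is not divisible by $4$ (the central $-1$ lies in both finite subgroups and acts by $(-1)^{t/2}$), whereas you invoke the central $\{\pm 1\}\subseteq\GG_2^1$ directly and run the Lyndon--Hochschild--Serre spectral sequence, exactly as the paper does for the full group $\GG_2$ in the proof of Proposition~\ref{rational}(b). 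Your variant is slightly stronger where it applies, since it gives integral vanishing $H^s(\GG_2^1,E_t)=0$ for $t\equiv 2\bmod 4$ rather than only rational vanishing; the paper's version is more uniform, disposing of both hypotheses with a single appeal to the finite-subgroup computations. Your attention to the legitimacy of tensoring the spectral sequence with $\QQ$ (finite filtration because $C_\bullet$ has length $3$) is well placed and implicit in the paper.
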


\begin{proof} This follows from tensoring the spectral sequence of 
Corollary \ref{SS} with $\QQ$ and noting that
$$
H^s(G_{24},E_t) \otimes \QQ = H^s(SD_{16},E_t) \otimes \QQ = 0
$$
if $s > 0$ or $t$ is not divisible by $4$.
\end{proof}

To isolate the torsion-free part of the cohomology of either $\GG_2$
or $\GG_2^1$ we use the
spectral sequences of Corollary \ref{SS}. 
From Remark \ref{integral-calc} we have an inclusion which is an isomorphism
in positive degrees
$$
\Z_3[\b^2\Delta^{-1}]/(3\b^2\Delta^{-1}) \subseteq H^\ast (G_{24},(E_2)_0).
$$
In the notation of the spectral sequences of Corollary \ref{SS} and 
Remark \ref{full-grp} we then have inclusions
$$
\Z_3[\b^2\Delta^{-1}]/(3\b^2\Delta^{-1})e_p \subseteq
E_1^{p,\ast}(\GG^1_2,(E_2)_0),\quad p=0,3. 
$$ 

Here is the main algebraic result. 

\begin{thm}\label{rational2} a) There is an element
$e\in H^3(\GG_2^1,(E_2)_0)$ of infinite order so that  
$$ 
H^\ast (\GG_2^1,(E_2)_0)\cong 
\Lambda(e)\otimes\Z_3[\b^2\Delta^{-1}]/(3\b^2\Delta^{-1})\ .
$$   

b) There is an element $\zeta\in H^1(\GG_2,(E_2)_0)$  of infinite order
so that  
$$ 
H^\ast (\GG_2,(E_2)_0)\cong 
\Lambda(\zeta)\otimes H^\ast (\GG_2^1,(E_2)_0) \ .
$$
\end{thm}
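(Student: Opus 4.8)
The plan is to read off both parts from spectral sequences: for~a) I would run the spectral sequence of Corollary~\ref{SS} with $M=(E_2)_0$, and for~b) I would deduce everything formally from a) using the splitting $\GG_2\cong\GG_2^1\times\Z_3$ of Remark~\ref{split-g2}. The essential input for a) is the $q=0$ line of the spectral sequence for $(E_2)_0$, and I expect the real work to lie there: it has to be obtained by a Bockstein-type argument from the already-known mod~$3$ calculation together with some profinite-module bookkeeping, after which the collapse of the spectral sequence and the determination of the ring structure are comparatively routine.

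To compute the $q=0$ line $E_2^{p,0}((E_2)_0)$, note first that $H^1(G_{24},(E_2)_0)=0$: by the inclusion recalled just before the theorem, $H^{>0}(G_{24},(E_2)_0)$ is isomorphic to $\Z_3[\beta^2\Delta^{-1}]/(3\beta^2\Delta^{-1})$ in positive degrees, which is zero in degree~$1$. Hence Lemma~\ref{when-exact} applies to the short exact sequence $0\to(E_2)_0\overset{3}{\longrightarrow}(E_2)_0\to(E_2)_0/(3)\to0$ and yields a long exact sequence relating the groups $E_2^{p,0}((E_2)_0)$ to the groups $E_2^{p,0}((E_2)_0/(3))$, which by Corollary~\ref{whats-needed} are $\FF_3$ for $p=0,3$ and $0$ for $p=1,2$. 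Since all of these are profinite $\Z_3$-modules (Remark~\ref{profinite}), the vanishing for $p=1,2$ forces $E_2^{1,0}((E_2)_0)=E_2^{2,0}((E_2)_0)=0$; and then for $p=0,3$ the long exact sequence exhibits $E_2^{p,0}((E_2)_0)$ as a profinite $\Z_3$-module on which multiplication by~$3$ is injective with cokernel $\FF_3$, so $E_2^{p,0}((E_2)_0)\cong\Z_3$.

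Next I would feed this into the full spectral sequence of Corollary~\ref{SS} for $M=(E_2)_0$. In columns $p=0,3$ the $E_1$-term is $H^q(G_{24},(E_2)_0)$, which is $\Z_3[[j]]$ for $q=0$ by Remark~\ref{integral-calc}, a copy of $\FF_3$ generated by $(\beta^2\Delta^{-1})^k$ for $q=4k$ with $k\geq1$, and $0$ for all other $q$; in columns $p=1,2$ it is concentrated in $q=0$. Taking $d_1$-cohomology, the row $q=0$ becomes what was computed above and the rows $q=4k$ survive unchanged, so the $E_2$-page is $\Z_3$ at $(0,0)$ and $(3,0)$, a copy of $\FF_3$ at $(0,4k)$ and $(3,4k)$ for $k\geq1$, and $0$ elsewhere. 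For degree reasons the spectral sequence collapses here with no additive extension problems --- everything lives in columns $p=0,3$, and the only candidate higher differential, a~$d_3$, would land in $H^\ast(G_{24},(E_2)_0)$ in a degree where that group vanishes --- so $H^n(\GG_2^1,(E_2)_0)$ equals $\Z_3$ for $n=0,3$, $\FF_3$ for $n\equiv0,3\pmod{4}$ with $n\geq4$, and $0$ otherwise. For the algebra structure, the classes $(\beta^2\Delta^{-1})^k\in E_1^{0,4k}$ are permanent cycles and the edge homomorphism onto column $p=0$ is restriction to $G_{24}$; hence $\beta^2\Delta^{-1}$ lifts to a class $b\in H^4(\GG_2^1,(E_2)_0)$ with $3b=0$ whose powers $b^k$ generate $H^{4k}(\GG_2^1,(E_2)_0)$ for $k\geq1$. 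Choosing a generator $e$ of $H^3(\GG_2^1,(E_2)_0)\cong\Z_3$, one has $e^2\in H^6=0$, while by multiplicativity of the spectral sequence $e\cdot b^k$ is detected by a nonzero permanent cycle in $E_1^{3,4k}=H^{4k}(G_{24},(E_2)_0)$ and hence generates $H^{4k+3}(\GG_2^1,(E_2)_0)$. This gives the isomorphism of algebras in a).

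For~b), Remark~\ref{split-g2} gives $\GG_2\cong\GG_2^1\times\Z_3$ with the $\Z_3$-factor lying inside the center $\Z_3^\times$, which acts on $(E_2)_\ast$ by scaling $u$ and trivially on $(E_2)_0$. In the Lyndon--Hochschild--Serre spectral sequence for $1\to\Z_3\to\GG_2\to\GG_2^1\to1$ the coefficient groups are $H^q(\Z_3,(E_2)_0)=(E_2)_0$ for $q=0,1$ (with the original $\GG_2^1$-action) and $0$ for $q\geq2$, so the $E_2$-page consists of two rows, each equal to $H^\ast(\GG_2^1,(E_2)_0)$. By a), this group vanishes in degrees $\equiv1,2\pmod{4}$, so every differential $d_2\colon E_2^{p,1}\to E_2^{p+2,0}$ has zero target and the spectral sequence collapses at $E_2$. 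Inflation along $\GG_2\to\GG_2^1$ splits off the $q=0$ row, and the pullback $\zeta\in H^1(\GG_2,(E_2)_0)$ of a generator of $H^1(\Z_3,\Z_3)$ along the retraction $\GG_2\to\Z_3$ is of infinite order, is detected on the $q=1$ row, and satisfies $\zeta^2=0$ since it is pulled back from $H^2(\Z_3,\Z_3)=0$; multiplication by~$\zeta$ then identifies the $q=1$ row with the $q=0$ row, giving $H^\ast(\GG_2,(E_2)_0)\cong\Lambda(\zeta)\otimes H^\ast(\GG_2^1,(E_2)_0)$.
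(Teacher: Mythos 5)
Your argument is correct and follows essentially the same route as the paper: part a) is obtained by applying Lemma \ref{when-exact} to the multiplication-by-$3$ sequence (using $H^1(G_{24},(E_2)_0)=0$ from Remark \ref{integral-calc}), feeding in Corollary \ref{whats-needed} and the profinite-module facts of Remark \ref{profinite} to get $E_2^{p,0}((E_2)_0)\cong\Z_3$ for $p=0,3$ and $0$ for $p=1,2$, and then observing the spectral sequence of Corollary \ref{SS} collapses; part b) is the K\"unneth splitting for $\GG_2\cong\GG_2^1\times\Z_3$, which you merely repackage as a collapsing Lyndon--Hochschild--Serre spectral sequence. Your write-up supplies somewhat more detail than the paper on the higher rows of the $E_1$-page and on the ring structure, but the ideas are identical.
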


\begin{proof}  
For the proof of part (a) we consider the functors 
from the category of profinite $\Z_3[[\GG_2^1]]$-modules 
to $3$-profinite abelian groups introduced in Remark 
\ref{profinite} and given by 
$$
M\mapsto E_2^{p,0}(M) =H^p(\Hom_{\Z_3[[\GG_2^1]]}(C_{\bullet},M))\ . 
$$ 
Here $C_\bullet$ is the resolution of Theorem \ref{alg-res}.

From Remark \ref{integral-calc} we know 
that the hypothesis of Lemma \ref{when-exact}
is satisfied for the  short exact sequence 
$$
0\to (E_2)_0\buildrel{\times 3}\over\longrightarrow (E_2)_0
\to (E_2)_0/(3)\to 0. \  
$$ 
Then Corollary \ref{whats-needed}, 
the long exact sequence of Lemma \ref{when-exact}, and the fact 
that the groups $E_2^{p,0}(\GG_2^1,(E_2)_0)$ are profinite $\ZZ_3$-
modules give  
$$
E_2^{p,0}(\GG_2^1,(E_2)_0)\cong 
\begin{cases} 
\Z_3, & p=0,3; \\ 
0,    & p=1,2 \ . \\
\end{cases} 
$$ 
See Remark \ref{profinite}.
This implies that the $E_2$-term of the spectral sequence of Corollary 
\ref{SS}  is isomorphic to
$$
\Lambda(e_3) \otimes \ \Z_3[\b^2\Delta^{-1}]/(3\b^2\Delta^{-1}).
$$
Since there can be no further differentials, part (a) follows. 

Since the central $\ZZ_3$ acts trivially on $(E_2)_0$, we have a 
K\"unneth isomorphism
$$
H^\ast(\ZZ_3,\ZZ_3)  \otimes H^\ast (\GG_2^1,(E_2)_0) \cong
H^\ast(\GG_2,(E_2)_0).
$$
Part (b) follows.  
\end{proof} 

We are now ready to state and prove the main result on rational homotopy.
Note that Theorem \ref{maintheorem} 
of the introduction is an immediate consequence of
Proposition \ref{rational}, of Theorem \ref{rational2}, 
of the spectral sequence
$$
H^s(\GG_2,(E_2)_t) \otimes \QQ \Longrightarrow
\pi_{t-s}L_{K(2)}S^0 \otimes \QQ.
$$ 
and part (b) of the following Lemma.  

Let $\kappa_2$ be the set of isomorphism
classes of $K(2)$-local spectra $X$ so that $(E_2)_\ast X \cong (E_2)_\ast
= (E_2)_\ast S^0$ as twisted $\GG_2$-modules. This is 
a  subgroup of the $K(2)$-local Picard group; the group operation
is given by smash product. In \cite{GHMRpic} we show that
$\kappa_2 \cong (\ZZ/3)^2$. 

For the next result, the spectra $F_p$ were defined in Remark
\ref{alg-vs-top-ss}.
 
\begin{lem}\label{yet-to-be-pre} (a)  Let $X \in \kappa_2$. 
Then for $p=0,1,2,3$, the edge homomorphism of the localized 
descent spectral sequence 
$$
E_2^{p,q,t}=\Ext_{\ZZ_3[[\GG_2^1]]}^q(C_p,(E_tX))\otimes \QQ
\Longrightarrow \pi_{t-q}L_{K(2)}(F_p\wedge X)  \otimes \QQ
$$
induces an isomorphism
$$
\xymatrix{
\pi_\ast L_{K(2)}(F_p\wedge X)\otimes \QQ \cong 
\Hom_{\ZZ_3[[\GG_2^1]]}(C_p,(E_2)_\ast X) \otimes \QQ.
}
$$
(b) Let $F = \GG_2^1$ or $\GG_2$. Then the localized spectral
sequence
$$
H^s(F,(E_tX)) \otimes \QQ \Longrightarrow \pi_{t-s}L_{K(2)}(E_2^{hF} \wedge
X) \otimes \QQ
$$
converges and collapses.
\end{lem}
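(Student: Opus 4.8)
The plan is to reduce both parts to the cohomological computations already established (Theorem~\ref{rational2} and Propositions~\ref{rational} and \ref{rational-bis}) together with the elementary numerology of the differentials. The one structural remark I would make at the outset is that, because $X\in\kappa_2$, there is an isomorphism $(E_2)_\ast X\cong (E_2)_\ast$ of twisted $\GG_2$-modules; restricting along the inclusions $\GG_2^1,\ G_{24},\ SD_{16}\subseteq\GG_2$, and compatibly with the character $\chi$, the same holds over each of these subgroups. Consequently $H^\ast(G,(E_2)_\ast X)\cong H^\ast(G,(E_2)_\ast)$ for every closed subgroup $G$, and likewise $\Hom_{\ZZ_3[[\GG_2^1]]}(C_p,(E_2)_\ast X)\cong \Hom_{\ZZ_3[[\GG_2^1]]}(C_p,(E_2)_\ast)$; so in every cohomology group appearing below I may replace $X$ by $S^0$.

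For part (a), I would first recall from Remark~\ref{alg-vs-top-ss} that $F_p\wedge X$ is a suspension of $E_2^{hG_{24}}\wedge X$ for $p=0,3$ and of $E_2^{hSD_{16}}\wedge X$ for $p=1,2$, so that the spectral sequence in the statement is the rationalized descent spectral sequence for a finite group, with $E_2$-term $\Ext^q_{\ZZ_3[[\GG_2^1]]}(C_p,(E_2)_\ast X)\otimes\QQ$. By Corollary~\ref{SS} this term already vanishes for $q>0$ integrally when $p=1,2$, while for $p=0,3$ it is $H^q(G_{24},(E_2)_\ast X)\otimes\QQ$, which vanishes for $q>0$ since the cohomology of a finite group in positive degrees is annihilated by the group order. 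Hence the rationalized spectral sequence is concentrated on the line $q=0$; it therefore collapses, the abutment carries no nontrivial filtration, and the edge homomorphism $\pi_\ast L_{K(2)}(F_p\wedge X)\otimes\QQ\to \Hom_{\ZZ_3[[\GG_2^1]]}(C_p,(E_2)_\ast X)\otimes\QQ$ is an isomorphism.

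For part (b) I would treat convergence and collapse in turn. The descent spectral sequence for $E_2^{hF}\wedge X$ is conditionally convergent by \cite{DH} (this is also visible from the finite towers of Remarks~\ref{GHMRshort} and \ref{full-grp}); rationally it acquires a horizontal vanishing line --- at $s>4$ when $F=\GG_2$, by Theorem~\ref{rational2}(b) together with Proposition~\ref{rational}, and at $s>3$ when $F=\GG_2^1$, by Proposition~\ref{rational-bis} --- and so converges strongly. For the collapse, note that a differential $d_r$ with $r\geq 2$ of this spectral sequence maps $H^s(F,(E_2)_t)\otimes\QQ$ to $H^{s+r}(F,(E_2)_{t+r-1})\otimes\QQ$. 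When $F=\GG_2$, Proposition~\ref{rational} shows that both of these groups vanish unless their internal degree is $0$ --- part (b) of that proposition disposes of $4\nmid t$, part (a) of the nonzero multiples of $4$ --- so a nonzero $d_r$ would force $t=0$ and $t+r-1=0$, i.e.\ $r=1$, which is absurd. When $F=\GG_2^1$, Proposition~\ref{rational-bis} forces $s\leq 3$, $s+r\leq 3$, $4\mid t$ and $4\mid t+r-1$; the last two conditions give $4\mid r-1$, hence $r\geq 5$, contradicting $s+r\leq 3$. In either case no differential can be nonzero, so $E_2=E_\infty$.

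I expect the only delicate point to be the strong convergence in (b): for the homotopy fixed point spectral sequence of an infinite profinite group this is not formal, so I would take care to deduce it from the conditional convergence of \cite{DH} (or, for $\GG_2^1$, from the finite tower (\ref{key-tower})) upgraded by the rational vanishing lines above. Everything else --- Shapiro's lemma in (a), the arithmetic of the $d_r$ in (b) --- is routine, the real computational content having been absorbed into Theorem~\ref{rational2} and Propositions~\ref{rational} and \ref{rational-bis}.
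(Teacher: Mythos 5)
Your part (a) and your degree-counting argument for the collapse in part (b) are fine; the sparseness argument via Propositions \ref{rational} and \ref{rational-bis} is a legitimate (and slightly more explicit) variant of what the paper does. The genuine gap is exactly at the point you flag as delicate: the strong convergence of the \emph{rationalized} descent spectral sequence for $F=\GG_2^1$ or $\GG_2$. The implication ``conditionally convergent integrally, plus a horizontal vanishing line after $\otimes\,\QQ$, hence strongly convergent rationally'' is not valid. Boardman's criterion must be applied to the spectral sequence whose convergence you are asserting; the rationalized spectral sequence is not conditionally convergent to $\pi_\ast\otimes\QQ$ in any a priori sense, because $-\otimes\QQ$ commutes neither with the inverse limits defining the abutment filtration nor with the intersection $Z_\infty=\bigcap_r Z_r$ defining $E_\infty$. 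An integral vanishing line at a finite page would save you (finite filtration, then rationalize), but no such line exists at $E_2$ here: $H^s(\GG_2^1,(E_2)_\ast)$ contains $\beta$-power torsion in arbitrarily high cohomological degree. Concretely, nothing in your argument rules out an element of $\pi_n\otimes\QQ$ represented by integral classes of unbounded filtration (differing by multiples of powers of $3$), which would make the rationalized filtration non-Hausdorff and the element invisible in $E_\infty\otimes\QQ$.

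The missing idea is the comparison with the tower spectral sequence, which you mention only parenthetically. The paper localizes the whole square of spectral sequences (\ref{diagram-of-ss}): spectral sequence (4), arising from the four-stage tower (\ref{key-tower}), has only four columns and therefore converges strongly even after $\otimes\,\QQ$; its $E_1$-term is identified with the $q=0$ line of the algebraic spectral sequence (2) by your part (a), and --- this is the step you never carry out --- its $d_1$ agrees with the algebraic $d_1$ by the construction of the tower. Hence the $E_2$-term of (4) is $H^p(\GG_2^1,(E_2)_tX)\otimes\QQ$, the sparseness of Proposition \ref{rational-bis} collapses (4), and the abutment $\pi_\ast L_{K(2)}(E_2^{h\GG_2^1}\wedge X)\otimes\QQ$ is thereby computed independently. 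Only then does one conclude, a posteriori, that the descent spectral sequence (3) converges and collapses: its $E_2$-term already coincides with the known answer, so there is no room for differentials or for convergence failure. You should restructure part (b) along these lines rather than trying to upgrade conditional convergence directly.
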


\begin{proof} For (a), the spectral sequence
$$
H^s(F,(E_2)_tX) \Longrightarrow \pi_{t-s}L_{K(2)}(E_2^{hF}\wedge X)
$$
has a horizontal vanishing line at $E_\infty$ by the calculations
of \S 3 of \cite{GHMR}.
Thus the rationalized spectral sequence 
$$
H^s(F,(E_2)_tX)\otimes \QQ \Longrightarrow
\pi_{t-s}L_{K(2)}(E_2^{hF}\wedge X) \otimes \QQ
$$
converges. The result follows in this case.

For (b) we first do the case of $\GG_2^1$. We localize the square of spectral 
sequences of (\ref{diagram-of-ss}) to get a new square of 
spectral sequences
\begin{equation}\label{diagram-of-ss-rat}
\xymatrix{
E_1^{p,q}(\Et_tX)\otimes \QQ\ar@{=>}[d]_*+[o][F-]{1}
 \ar@{=>}[r]^*+[o][F-]{2} &
H^{p+q}(\GG_2^1,\Et_t X) \otimes \QQ \ar@{=>}[d]^*+[o][F-]{3} \\
\pi_{t-q}L_{K(2)}(F_p\wedge X) \otimes \QQ 
\ar@{=>}[r]_*+[o][F-]{4}  & \pi_{t-(p+q)} 
L_{K(2)}(E_2^{h\GG_2^1} \wedge X) \otimes \QQ.
}
\end{equation} 
We will show that spectral sequence (3) converges and the result will
follow.

First note that spectral sequences (2) and (4) are the localizations of
finite and convergent spectral sequences, so must converge. We have
noted in the proof of part (a) that the spectral sequences of (1) converge. 
Now we note that spectral sequence (2) with $q=0$ and 
the spectral sequence of (4) have the same $d_1$, by the construction
of the tower.

From this we conclude the $E_2$-term of the spectral sequence (4) is
$$
E_2^{p,t} \cong H^p(\GG_2^1,(E_2)_tX) \otimes \QQ.
$$
Proposition \ref{rational-bis} implies that the spectral sequence (4) collapses
and that, in fact, if 
$$
\pi_nL_{K(2)}(E_2^{h\GG_2^1} \wedge X) \otimes \QQ \ne 0
$$
there are unique integers $p$ and $t$ with $t-p=n$ and
$$
\pi_nL_{K(2)}(E_2^{h\GG_2^1} \wedge X) \otimes \QQ \cong
H^p(\GG_2^1,(E_2)_tX) \otimes \QQ.
$$
It follows immediately that spectral sequence (3) converges and collapses.

There is an analogous argument for $\GG_2$, using the expanded
square of spectral sequences for this group. See Remark
\ref{alg-vs-top-ss}. The needed properties of $H^p(\GG_2,(E_2)_tX) \otimes \QQ$
are obtained by combining Proposition \ref{rational} with Theorem \ref{rational2}.b.
\end{proof}

Theorem \ref{rational2} and Lemma \ref{yet-to-be-pre}
immediately imply the following results. Let $S^n_p$ denote
the $p$-complete sphere.

\begin{thm}\label{yet-to-be} Let $X \in \kappa_2$. Then the 
rational Hurewicz homomorphism
$$
\pi_0L_0X \longr \pi_0L_0L_{K(2)}(E_2 \wedge X)\cong (E_2)_0X \otimes \QQ
$$ 
is injective. Given a choice of isomorphism $f:(E_2)_\ast \to (E_2)_\ast X$
of twisted $\GG_2$-modules the image of the multiplicative unit $1$ under the isomorphism
$$
\QQ_3 \cong \QQ \otimes H^0(\GG_2,(E_2)_0) \cong \pi_0L_0X
$$
extends to a weak equivalence of $L_0L_{K(2)}S^0$-modules
$$
L_0L_{K(2)}S^0 \simeq L_0X.
$$
\end{thm}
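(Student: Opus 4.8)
The plan is to extract everything from Theorem \ref{rational2} and Lemma \ref{yet-to-be-pre}, since those already do the genuine work. First I would address the Hurewicz statement. The rational Hurewicz homomorphism in question is, by definition, the edge homomorphism of the descent spectral sequence
$$
H^s(\GG_2,(E_2)_tX)\otimes \QQ \Longrightarrow \pi_{t-s}L_{K(2)}(E_2^{h\GG_2}\wedge X)\otimes\QQ \cong \pi_{t-s}L_0X\otimes\QQ,
$$
so I would identify $\pi_0L_0X$ with the $s=t=0$ group on the $E_\infty$-page. By Lemma \ref{yet-to-be-pre}(b) with $F=\GG_2$, this spectral sequence converges and collapses, so there is no higher filtration and the edge map $\pi_0L_0X\otimes\QQ\to H^0(\GG_2,(E_2)_0X)\otimes\QQ$ is an isomorphism. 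But $H^0(\GG_2,(E_2)_0X)\otimes\QQ\subseteq (E_2)_0X\otimes\QQ$ is exactly the rational Hurewicz image, so injectivity is immediate. (Here I am using that $\QQ\otimes H^0(\GG_2,(E_2)_0)\cong \QQ_3$ from Theorem \ref{rational2}(b), together with a choice of $f$ to transport this to $(E_2)_0X$; the collapse from Lemma \ref{yet-to-be-pre}(b) shows $\pi_0L_0X\otimes\QQ$ is one-dimensional over $\QQ_3$, so no rationalization is lost and $\pi_0 L_0 X \cong \QQ_3$ on the nose once we observe $\pi_0 L_0 X$ is already a $\QQ_3$-module.)

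Next I would build the equivalence $L_0L_{K(2)}S^0\simeq L_0X$. Fix $f\colon (E_2)_\ast\xrightarrow{\ \cong\ }(E_2)_\ast X$ of twisted $\GG_2$-modules; this exists because $X\in\kappa_2$. The class $1\in H^0(\GG_2,(E_2)_0)$ maps under $f_\ast$ and the isomorphism above to a class in $\pi_0L_0X$, which I want to realize as a map of $L_0L_{K(2)}S^0$-modules. Since $L_0L_{K(2)}S^0\simeq L_0S^0_3$ is the rational sphere spectrum (localized appropriately), $\pi_0$ of the mapping spectrum of $L_0L_{K(2)}S^0$-modules from $L_0L_{K(2)}S^0$ to $L_0X$ is just $\pi_0L_0X$, so every element of $\pi_0L_0X$ is canonically such a module map. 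Take the one corresponding to the unit; call it $\eta\colon L_0L_{K(2)}S^0\to L_0X$.

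Finally I would check $\eta$ is an equivalence by showing it induces an isomorphism on $(E_2)_\ast(-)\otimes\QQ$, which detects equivalences of $E(1)$-local (indeed $E(0)$-local) spectra here because both sides are rational. On $E_2$-homology, $\eta$ induces a map $(E_2)_\ast\otimes\QQ\to (E_2)_\ast X\otimes\QQ$ of $(E_2)_\ast\otimes\QQ$-modules sending $1$ to $f_\ast(1)$, hence agreeing with $f\otimes\QQ$ up to the module structure; since $f$ is an isomorphism, so is $f\otimes\QQ$. Concretely, both $\pi_\ast L_0L_{K(2)}S^0$ and $\pi_\ast L_0X$ are concentrated in degree zero where they equal $\QQ_3$ (using Theorem \ref{rational2}(b) and Proposition \ref{rational} for the former, and Lemma \ref{yet-to-be-pre}(b) plus the collapse for the latter), and $\eta$ carries a generator to a generator by construction, so $\eta$ is a $\pi_\ast$-isomorphism between $E(0)$-local spectra, hence an equivalence. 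I expect the only mildly delicate point to be the bookkeeping identifying the edge homomorphism with the rational Hurewicz map and confirming the module structure on the mapping spectrum is as claimed; everything else is formal once Lemma \ref{yet-to-be-pre} is in hand.
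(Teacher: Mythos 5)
Your overall strategy --- deduce everything from Theorem \ref{rational2} and Lemma \ref{yet-to-be-pre}, produce the module map $\eta$ from the unit via the free-module adjunction, then check it is a $\pi_\ast$-isomorphism --- is exactly the route the paper intends (the paper offers nothing beyond the sentence that those two results ``immediately imply'' the theorem). The construction of $\eta$ itself is fine: for any ring spectrum $R$ and $R$-module $M$, $\pi_0$ of the $R$-module mapping spectrum from $R$ to $M$ is $\pi_0M$, so you neither need nor should assert that $L_0L_{K(2)}S^0\simeq L_0S_3^0$; that identification is false.

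The verification that $\eta$ is an equivalence, however, rests on a genuine error. You claim that $\pi_\ast L_0L_{K(2)}S^0$ and $\pi_\ast L_0X$ are concentrated in degree zero, where they equal $\QQ_3$. This contradicts Theorem \ref{maintheorem} and Theorem \ref{ratsphere}: $\pi_\ast L_0L_{K(2)}S^0\cong \Lambda_{\QQ_3}(\zeta,e)$ is nonzero in degrees $0,-1,-3,-4$. The rational $E_2$-term is concentrated in \emph{internal} degree $t=0$ by Proposition \ref{rational}, but it is spread over filtrations $s=0,1,3,4$ and hence over total degrees $t-s=0,-1,-3,-4$; you appear to have conflated the two gradings. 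Your alternative check via $(E_2)_\ast(-)\otimes\QQ$ also fails as stated: with the paper's definition $(E_2)_\ast Y=\pi_\ast L_{K(2)}(E_2\wedge Y)$, this functor vanishes on rational spectra such as $L_0X$, so it cannot detect the equivalence, and even with a naive reading you give no reason it detects equivalences here. The correct finish is the one Lemma \ref{yet-to-be-pre}(b) is designed for: $\eta$ induces a map of the convergent, collapsing rationalized descent spectral sequences, which on $E_2$-terms is the map of $H^\ast(\GG_2,(E_2)_\ast)\otimes\QQ$-modules determined by $1\mapsto f(1)$; since $f$ is an isomorphism of twisted $\GG_2$-modules, this is an isomorphism of $E_2$-terms (equivalently, $\pi_\ast L_0X$ is free of rank one over $\Lambda_{\QQ_3}(\zeta,e)$ on the class of $f(1)$), so $\eta$ is a $\pi_\ast$-isomorphism of rational spectra and hence an equivalence. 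Finally, on the Hurewicz statement: collapse alone does not give injectivity of the edge map; you also need Proposition \ref{rational} to rule out rational contributions to $\pi_0$ from positive filtration (only $s=t=4$ is a candidate, and it is torsion), a point you use only implicitly.
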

\bigskip

\begin{thm}\label{ratsphere} The localized spectral sequence of Lemma \ref{yet-to-be-pre}
$$
\QQ \otimes H^s(\GG_2,(E_2)_t) \Longrightarrow \pi_{s-t}L_0L_{K(2)}S^0
$$
determines an isomorphism
$$
\Lambda_{\QQ_3}(\zeta,e) \cong \pi_\ast L_0L_{K(2)}S^0.
$$
Furthermore, there is a weak equivalence
\begin{equation*}\label{L0}
L_0(S_3^0\vee S_3^{-1}\vee S_3^{-3}\vee
S_3^{-4})\simeq L_0L_{K(2)}S^0.
\end{equation*}
\end{thm}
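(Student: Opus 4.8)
The plan is to derive Theorem~\ref{ratsphere} directly from the rational information already assembled. First I would identify the $E_2$-term of the localized descent spectral sequence
$$
\QQ \otimes H^s(\GG_2,(E_2)_t) \Longrightarrow \pi_{t-s}L_0L_{K(2)}S^0
$$
using Theorem~\ref{rational2}.b together with Proposition~\ref{rational}: the latter forces $H^s(\GG_2,(E_2)_t)$ to be torsion when $t\neq 0$, so rationally only the column $t=0$ survives, and there Theorem~\ref{rational2}.b gives $\QQ\otimes H^\ast(\GG_2,(E_2)_0)\cong \Lambda_{\QQ_3}(\zeta,e)$ with $|\zeta|=(1,0)$ and $|e|=(3,0)$. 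Convergence and collapse of this spectral sequence is exactly Lemma~\ref{yet-to-be-pre}.b applied to $X=S^0\in\kappa_2$ (and $F=\GG_2$). Since the spectral sequence collapses and, by the concentration in the single internal degree $t=0$, there are no additive extension problems across different $(p,t)$ with fixed $t-p$, one reads off an isomorphism of graded $\QQ_3$-modules $\Lambda_{\QQ_3}(\zeta,e)\cong \pi_\ast L_0L_{K(2)}S^0$; the multiplicative structure is inherited from the ring structure on the $E_2$-term, giving the stated algebra isomorphism.

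For the splitting statement, the idea is to build the equivalence one wedge summand at a time, using that rationally the homotopy of $L_0L_{K(2)}S^0$ is a free $\QQ_3$-module on classes in degrees $0,-1,-3,-4$ (the degrees of $1,\zeta,e,\zeta e$). The unit $S^0\to L_{K(2)}S^0$ gives, after applying $L_0$, a map $S_3^0\to L_0L_{K(2)}S^0$ hitting the degree-$0$ generator (here I use that $\pi_0 L_0 S_3^0\cong\QQ_3$ and that $L_0 S_3^0\simeq H\QQ_3$, so a map out of it is determined by where it sends the fundamental class). The class $\zeta\in\pi_{-1}$ is represented by a map $S_3^{-1}\to L_0L_{K(2)}S^0$, and similarly $e$ and $\zeta e$ give maps from $S_3^{-3}$ and $S_3^{-4}$. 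Wedging these four maps together produces
$$
L_0(S_3^0\vee S_3^{-1}\vee S_3^{-3}\vee S_3^{-4})\longra L_0 L_{K(2)}S^0,
$$
and on rational homotopy this is visibly an isomorphism of $\QQ_3$-modules by the computation of the previous paragraph; since $L_0$ is rationalization, a map of $E(0)$-local spectra inducing an isomorphism on $\pi_\ast\otimes\QQ$ is a weak equivalence.

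The main obstacle I anticipate is purely bookkeeping rather than conceptual: one must be careful that the collapse of the localized spectral sequence really does identify $\pi_\ast L_0L_{K(2)}S^0$ with $\Lambda_{\QQ_3}(\zeta,e)$ as a \emph{ring}, not merely as a graded module, and that no hidden multiplicative extension can occur. This is handled by the observation already used in the proof of Lemma~\ref{yet-to-be-pre}.b that nonzero rational homotopy is concentrated so that for each $n$ there is a unique $(p,t)$ with $t-p=n$ contributing, which kills both additive and multiplicative ambiguity, combined with the fact that $\zeta^2$ and $e^2$ land in degrees ($-2$ and $-6$) where the homotopy vanishes. A minor secondary point is checking that the four maps out of $p$-complete spheres assemble correctly after applying $L_0$; this is routine since $L_0$ commutes with finite wedges and $L_0 S_3^k\simeq \Sigma^k H\QQ_3$.
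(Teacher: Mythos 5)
Your proposal is correct and follows essentially the same route as the paper, which derives Theorem~\ref{ratsphere} directly from Proposition~\ref{rational} (concentration in internal degree $t=0$ after rationalization), Theorem~\ref{rational2} (identification of $\QQ\otimes H^\ast(\GG_2,(E_2)_0)$ with $\Lambda_{\QQ_3}(\zeta,e)$), and Lemma~\ref{yet-to-be-pre}.b (convergence and collapse), with the splitting then read off from the fact that rational spectra are determined by their homotopy groups. The extra care you take with multiplicative extensions and with assembling the four maps out of $3$-complete spheres is exactly the bookkeeping the paper leaves implicit.
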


\section{The chromatic splitting conjecture} 

In this section we prove a refinement of Theorem \ref{maintheorem2} of the
introduction. 

Our main result, Theorem \ref{chrom-split-1}, analyzes $L_1X$ for
$X \in \kappa_2$.
For this we will use the chromatic fracture square
\begin{equation}\label{chrm-sq}
\xymatrix{
L_1X \rto \dto & L_{K(1)}X \dto\\
L_0X \rto & L_0L_{K(1)}X
}
\end{equation}
We made an analysis of $L_0X$ in Theorem \ref{yet-to-be}. The
calculation of  $L_{K(1)}X$ has a number of interesting features, so
we dwell on it. In particular, we will produce weak equivalences
$$
L_{K(1)}S^0 \to L_{K(1)}L_{K(2)}(E_2^{h\GG_2^1}\wedge X)
$$
which will be the key to the entire calculation.

We begin with the following general result; we learned the argument
from Mark Hovey. The argument is valid only for ${K(1)}$-localization,
which may indicate that it would be hard to generalize our arguments to
higher height. Let $S/p^n$ denote the Moore spectrum.  

\begin{lem}\label{k1-localize} Let $X$ be a spectrum. Then
$$
L_{K(1)}X = \holim_n\ v_1^{-1} S/p^n\wedge X
$$
and $v_1^t:\Sigma^{2t(p-1)} S/p^n \to S/p^n$ is any choice
of $v_1$-self map. 
\end{lem}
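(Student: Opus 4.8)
The plan is to express $L_{K(1)}X$ as a homotopy limit of mapping telescopes of the finite spectra $S/p^n$. First I would recall that $K(1)$-localization can be built in two stages: arithmetically, $L_{K(1)}X$ is the $p$-completion of the telescopic localization $L_1^f X = v_1^{-1}X$ in the $E(1)$-local (equivalently here, $p$-local) category, but more usefully one has the formula $L_{K(1)}X = \holim_n L_{K(1)}(S/p^n \wedge X)$ since $\{S/p^n\}$ is a tower whose homotopy limit smashed with $X$ recovers the $p$-completion and $K(1)$-localization is smashing after $p$-completion onto the $v_1$-periodic part. So the real content is to identify $L_{K(1)}(S/p^n\wedge X)$ with $v_1^{-1}(S/p^n\wedge X)$.

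The key input is that for a type-$1$ finite complex such as $S/p^n$, telescopic and $K(1)$-localization agree: this is the height-$1$ case of the (now-proven-at-height-$1$) telescope conjecture, originally due to Mahowald and Miller, which says $v_1^{-1}S/p^n \simeq L_{K(1)}S/p^n$. Concretely, $S/p^n$ admits a $v_1$-self map $v_1^t:\Sigma^{2t(p-1)}S/p^n\to S/p^n$ (for $p$ odd one may take $t=1$; in general some power), and the resulting telescope $v_1^{-1}S/p^n = \hocolim(S/p^n \to \Sigma^{-2t(p-1)}S/p^n\to\cdots)$ is already $E(1)$-local and $K(1)$-local because $S/p^n$ is $E(0)$-acyclic, so $L_1 = L_{K(1)}$ on it, and the telescope agrees with $L_1$ by the Mahowald–Miller computation (equivalently, $v_1^{-1}S/p^n$ has homotopy groups that are a known localization of $\pi_*S/p^n$ matching $\pi_*L_{K(1)}S/p^n$). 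Smashing with $X$ and using that smashing with the finite complex $S/p^n$ commutes with the filtered colimit defining the telescope, one gets $v_1^{-1}(S/p^n\wedge X) \simeq L_{K(1)}(S/p^n)\wedge X \simeq L_{K(1)}(S/p^n\wedge X)$, the last step because $L_{K(1)}$ is smashing.

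Then I would assemble: taking $\holim_n$ over the tower of Moore spectra,
$$
\holim_n v_1^{-1}(S/p^n\wedge X) \simeq \holim_n L_{K(1)}(S/p^n\wedge X) \simeq L_{K(1)}\,\holim_n (S/p^n\wedge X) \simeq L_{K(1)}(X_p^\wedge) \simeq L_{K(1)}X,
$$
where the second equivalence uses that $L_{K(1)}$ commutes with homotopy limits (it is a Bousfield localization, hence a right adjoint up to the relevant subtleties — more precisely $L_{K(1)}$ commutes with the homotopy limit of this particular tower because each term is already $K(1)$-local), the third uses $\holim_n S/p^n\simeq S^0_p$, and the last uses $L_{K(1)}X_p^\wedge \simeq L_{K(1)}X$.

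The main obstacle is justifying the two exchanges carefully: that $L_{K(1)}$ passes inside the $\holim$ over the Moore tower (which is fine precisely because, by the first part, each $v_1^{-1}(S/p^n\wedge X)$ is already $K(1)$-local, so the $\holim$ of $K(1)$-local spectra is $K(1)$-local), and the invocation of the height-$1$ telescope conjecture to identify $v_1^{-1}S/p^n$ with $L_{K(1)}S/p^n$ — this is the one genuinely non-formal ingredient, and I would cite Mahowald's and Miller's work (or Bousfield) rather than reprove it. Everything else is formal manipulation of localizations, the smashing property of $L_{K(1)}$, and the standard fact that smashing with a finite spectrum commutes with homotopy colimits.
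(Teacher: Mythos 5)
Your strategy is the same as the paper's: everything reduces to the height-one telescope conjecture $L_1(S/p^n)\simeq v_1^{-1}S/p^n$ of Mahowald--Miller (which both you and the paper cite rather than reprove), combined with a standard formula expressing $L_{K(1)}X$ as a homotopy limit over the Moore tower and the smashing property of a localization functor. The paper's version is slightly more economical: it quotes Proposition 7.10(e) of Hovey--Strickland in the form $L_{K(1)}X = \holim_n\, S/p^n\wedge L_1X$, rewrites $S/p^n\wedge L_1X \simeq L_1(S/p^n)\wedge X$ using that $L_1$ is smashing, and is done.

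Two of your justifications are wrong as stated, though both are repairable. First, $L_{K(1)}$ is \emph{not} smashing (no $L_{K(n)}$ with $n\geq 1$ is: for instance $L_{K(1)}$ kills the rational sphere while $L_{K(1)}S^0\wedge S\QQ$ is nontrivial), so you cannot deduce $L_{K(1)}(S/p^n)\wedge X\simeq L_{K(1)}(S/p^n\wedge X)$ from a smashing property of $L_{K(1)}$. The correct route, which you in fact have the ingredients for, is that $L_1$ is smashing, so $L_1(S/p^n)\wedge X\simeq L_1(S/p^n\wedge X)$, and $L_1$ agrees with $L_{K(1)}$ on the rationally acyclic spectra $S/p^n$ and $S/p^n\wedge X$. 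Second, the interchange $\holim_n L_{K(1)}(S/p^n\wedge X)\simeq L_{K(1)}\holim_n(S/p^n\wedge X)$ is not formal: observing that each term of the tower is $K(1)$-local shows only that the homotopy limit is $K(1)$-local, not that the natural map to it from $X$ (or from the $p$-completion of $X$) is a $K(1)_*$-equivalence. That assertion is precisely the content of the Hovey--Strickland proposition and should be cited as such, as the paper does, rather than derived from ``a limit of local objects is local.''
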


\begin{proof} By Proposition 7.10(e) of
\cite{HS} we know that
\begin{equation}\label{HScor}
L_{K(1)}X = \holim_n\  S/p^n\wedge L_1 X.
\end{equation}
Since $L_1$ is smashing, so we may rewrite (\ref{HScor}) as 
$$
L_{K(1)}X = \holim_n\ L_1 (S/p^n) \wedge X.
$$
Thus it is sufficient to know $L_1 (S/p^n) = v_1^{-1}S/p^n$.  This follows
from the calculations of  \cite{MillerJPAA}; see \cite{RavTel} for complete details. 
\end{proof}

\def\mm{{{\mathfrak{m}}}}

If $R$ is a discrete ring, then the Laurent series over $R$ 
is the ring $R((x)) = R[[x]][x^{-1}]$.  

\begin{prop}\label{local-e2s} (a) There are isomorphisms
\begin{equation}\label{the-local-calc-G24}
\FF_3((v_1^6\Delta^{-1}))[v_1^{\pm 1}] \cong v_1^{-1}H_\ast(G_{24},(E_2)_\ast/3)
\end{equation}
and
\begin{equation}\label{the-local-calc-SD16}
\FF_3((v_1^4v_2^{-1}))[v_1^{\pm 1}] \cong
v_1^{-1} H^\ast (SD_{16},(E_2)_\ast/3).
\end{equation}

(b) There are isomorphisms
\begin{equation}\label{the-local-calc}
\FF_3[v_1^{\pm1}] \otimes \Lambda(v_1^{-1}b_1) \cong
 v_1^{-1} H^\ast (\GG^1_2,(E_2)_\ast/3).
\end{equation}
and
\begin{equation}\label{the-local-calc-big}
\FF_3[v_1^{\pm1}] \otimes \Lambda(v_1^{-1}b_1,\zeta) \cong
 v_1^{-1} H^\ast (\GG_2,(E_2)_\ast/3).
\end{equation}
The element $b_1$ has bidegree $(1,8)$ and the element
$v^{-1}b_1$ detects the image of the homotopy class $\alpha_1 \in
\pi_3 S^0/3$.  The element  $\zeta$ has bidegree $(1,0)$ and
is the image of the class of the same name in $H^1(\GG_2,(E_2)_0)$
from Theorem \ref{rational2}.b.
\end{prop}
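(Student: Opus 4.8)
\emph{Strategy and part (a).} The plan is to invert $v_1$ throughout the known input --- Theorems \ref{E1} and \ref{E1diff} and the algebraic spectral sequence of Corollary \ref{SS} --- and then to pass from $\GG_2^1$ to $\GG_2$ by the splitting $\GG_2\cong\GG_2^1\times\Z_3$ of Remark \ref{split-g2}. We will use repeatedly that localization at the single element $v_1$ is exact, hence commutes with the spectral sequence of Corollary \ref{SS} (which, having only the columns $p=0,1,2,3$, converges for every profinite module), so that inverting $v_1$ may be carried out term by term; that inverting $v_1=u_1u^{-2}$ in $(E_2)_\ast/3=\FF_9[[u_1]][u^{\pm1}]$ is the same as inverting $u_1$; and that $v_1^6\Delta^{-1}=j$ and $v_1^4v_2^{-1}=u_1^4$ modulo $3$, so that inverting $v_1$ automatically inverts $j$ and $u_1^4$ (since $\Delta$ and $v_2$ are already units). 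For part (a): as $|SD_{16}|$ is prime to $3$, $H^\ast(SD_{16},(E_2)_\ast/3)$ is concentrated in cohomological degree $0$ and equals the invariant subring, and after inverting $v_1$ a direct weight count on $\FF_9((u_1))[u^{\pm1}]$ with the $SD_{16}$-action of \cite{GHMR} (the $\omega$-weights put the internal degree in $4\Z$ and fix the exponent of $u_1$ modulo $4$; the Galois generator then forces coefficients into $\FF_3$) gives (\ref{the-local-calc-SD16}). For $G_{24}$ we localize the ring of Theorem \ref{E1}, $p=0,3$: the relations $v_1\alpha=v_1\widetilde\alpha=0$ and $\alpha\widetilde\alpha=-v_1\beta$ make $\alpha$, $\widetilde\alpha$, $\beta$ --- and hence all of positive cohomological degree --- $v_1$-power torsion, so they vanish, while $\FF_3[[j]][\Delta^{\pm1},v_1]$ becomes $\FF_3((j))[v_1^{\pm1}]$; this is (\ref{the-local-calc-G24}).

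\emph{The group $\GG_2^1$.} Localize the spectral sequence of Corollary \ref{SS} for $M=(E_2)_\ast/3$ at $v_1$; it still converges, now to $v_1^{-1}H^\ast(\GG_2^1,(E_2)_\ast/3)$. By part (a) for $p=0,3$, and by Theorem \ref{E1} for $p=1,2$ --- where already $E_1^{1,q}=E_1^{2,q}=0$ for $q>0$, and where inverting $v_1$ turns $E_1^{1,0}=E_1^{2,0}$ into the free $\FF_3((v_1^4v_2^{-1}))[v_1^{\pm1}]$-module on $b_1$, resp.\ $\overline b_1$ --- the localized $E_1$-page is concentrated on the row $q=0$. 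Hence the spectral sequence collapses at $E_2$, and $v_1^{-1}H^\ast(\GG_2^1,(E_2)_\ast/3)$ is the cohomology of the $v_1$-linear four-term complex
$$
v_1^{-1}E_1^{0,0,\ast}\xrightarrow{\,d_1\,}v_1^{-1}E_1^{1,0,\ast}\xrightarrow{\,d_1\,}v_1^{-1}E_1^{2,0,\ast}\xrightarrow{\,d_1\,}v_1^{-1}E_1^{3,0,\ast}\to0,
$$
whose outer terms are copies of $\FF_3((j))[v_1^{\pm1}]$ on $e_0=\Delta_0$ and $e_3=\overline\Delta_0$, and whose inner terms are copies of $\FF_3((v_1^4v_2^{-1}))[v_1^{\pm1}]$ on $b_1$ and $\overline b_1$. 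It remains to compute this cohomology from the explicit $d_1$ of Theorem \ref{E1diff}, using that after inverting $v_1$ every power of $v_1$ occurring there is a unit and every integer $m$ occurring there is a unit of $\FF_3$; this is the same bookkeeping that produces Corollary \ref{whats-needed}, now carried out in all internal degrees. One checks: the first $d_1$ is injective off the $\FF_3[v_1^{\pm1}]$-span of $\Delta_0=e_0$ (distinct nonzero $k$ are sent to linearly independent elements); the last $d_1$ is surjective (every $\overline\Delta_\ell$, including $\overline\Delta_0=e_3$, lies in its image, e.g.\ $\overline\Delta_0$ via $d_1(\overline b_1)=-v_1^2\overline\Delta_0$); and the middle $d_1$ carries the cokernel of the first isomorphically onto the kernel of the last except for the single class $b_1$, which is a $d_1$-cycle but not a $d_1$-boundary. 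The cohomology is then $\FF_3[v_1^{\pm1}]e_0$ in degree $p=0$, $\FF_3[v_1^{\pm1}]\langle v_1^{-1}b_1\rangle$ in degree $p=1$, and $0$ for $p=2,3$, which is (\ref{the-local-calc}).

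\emph{The group $\GG_2$ and the identifications.} Modulo $3$ the central $\Z_3\subseteq\Z_3^\times\subseteq\GG_2$ acts trivially on $(E_2)_\ast$ --- it lands in $1+3\Z_3$, which acts on $u$ by scalars $\equiv1$ mod $3$ --- so the K\"unneth theorem for $\GG_2\cong\GG_2^1\times\Z_3$ gives $H^\ast(\GG_2,(E_2)_\ast/3)\cong\Lambda(\zeta)\otimes H^\ast(\GG_2^1,(E_2)_\ast/3)$ with $\zeta$ the internal-degree-$0$ generator of $H^1(\Z_3,\FF_3)$; inverting $v_1$ preserves this decomposition, giving (\ref{the-local-calc-big}), and by construction this $\zeta$ is the mod-$3$ reduction of the class of Theorem \ref{rational2}.b. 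Finally $v_1^{-1}b_1$ lies in cohomological degree $1$ and internal degree $4$, hence total degree $3$; since $\alpha_1\in\pi_3(S^0/3)$ has nonzero image in $\pi_3v_1^{-1}L_{K(2)}(S^0/3)$ and $H^1$ of the localized descent spectral sequence is one-dimensional in that total degree, $\alpha_1$ is detected there by $v_1^{-1}b_1$ up to a unit, in agreement with the description of $b_1$ in \cite{HKM}. The main obstacle is the bookkeeping step above: extracting the precise kernels and cokernels of the three localized differentials from the long case division of Theorem \ref{E1diff}; the rest is formal.
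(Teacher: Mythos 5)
Your proposal is correct and follows essentially the same route as the paper: part (a) by inverting $v_1$ in the computations of Theorem \ref{E1} (killing the $v_1$-torsion classes $\alpha$, $\widetilde\alpha$, $\beta$), and part (b) by localizing the algebraic spectral sequence of Corollary \ref{SS} at $v_1$ and running the explicit $d_1$ of Theorem \ref{E1diff}, with the passage from $\GG_2^1$ to $\GG_2$ via the central splitting; the paper's proof is just a terse citation of these same inputs (plus Proposition 1.5 of \cite{HKM} for the detection of $\alpha_1$, which you could also simply quote rather than re-derive). The bookkeeping you flag as the main obstacle is exactly the ``little bookkeeping'' the paper alludes to around Corollary \ref{whats-needed}, and your leading-term analysis of the three localized differentials is the right way to carry it out.
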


\begin{proof} The results in (a) are immediate consequences of Theorem \ref{E1}. 
See also \cite{GHMR} \S 3.
For (b), the two isomorphisms both follow from Theorem
\ref{E1diff} and the algebraic  spectral sequences of Corollary \ref{SS}.
That $v_1^{-1}b_1$ detects the image of $\alpha_1$ is proved in
Proposition 1.5 of \cite{HKM}.
\end{proof}
 
Here is our key lemma. Compare Lemma \ref{yet-to-be-pre} in the rational
case.

\begin{lem}\label{yet-to-be-pre-k1} Let $X\in \kappa_2$ and let $X/3 = S/3\wedge X$.

(a) Suppose that
$F = G_{24}$ or $SD_{16}$. Then the edge homomorphism
induces an isomorphism
$$
\xymatrix{
\pi_\ast L_{K(1)}L_{K(2)}(E_2^{hF}\wedge X/3) \rto^-\cong &
v_1^{-1} H^0(F,(E_2)_\ast X/3)
}
$$

(b) Let $F = \GG_2^1$ or $\GG_2$. Then the localized spectral
sequence
$$
(v_1^{-1} H^s(F,(E_\ast X)/3))_t  \Longrightarrow \pi_{t-s}L_{K(1)}
(E_2^{hF} \wedge X/3)
$$
converges and collapses.
\end{lem}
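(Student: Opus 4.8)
The plan is to obtain both spectral sequences by inverting $v_1$ in the descent spectral sequence computing $\pi_\ast L_{K(2)}(E_2^{hF}\wedge X/3)$, and then to read off convergence and collapse directly from the computation of $v_1^{-1}H^\ast(F,(E_2)_\ast/3)$ recorded in Proposition \ref{local-e2s}. For any closed subgroup $F\subseteq\GG_2$ there is a descent spectral sequence $H^s(F,(E_2)_tX/3)\Rightarrow\pi_{t-s}L_{K(2)}(E_2^{hF}\wedge X/3)$, strongly convergent by the vanishing-line results of \cite{GHMR} (trivially so when $F$ is finite). Since $X\in\kappa_2$ we have $(E_2)_\ast X/3\cong(E_2)_\ast/3$ as twisted $\GG_2$-modules, so its $E_2$-term is $H^\ast(F,(E_2)_\ast/3)$. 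The spectrum $L_{K(2)}(E_2^{hF}\wedge X/3)$ is a mod-$3$ spectrum, so by Lemma \ref{k1-localize} its $K(1)$-localization is computed by inverting $v_1$; hence applying $v_1^{-1}(-)$, which is exact and compatible with the $v_1$-self map of $S/3$, turns the descent spectral sequence into one with $E_2$-term $v_1^{-1}H^s(F,(E_2)_tX/3)$ and abutment $v_1^{-1}\pi_{t-s}L_{K(2)}(E_2^{hF}\wedge X/3)=\pi_{t-s}L_{K(1)}(E_2^{hF}\wedge X/3)$. This is the localized spectral sequence of the statement, and it remains to feed in Proposition \ref{local-e2s}.

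For part (a): when $F=SD_{16}$ the order of $F$ is prime to $3$, so $H^s(SD_{16},(E_2)_\ast/3)=0$ for $s>0$ and the descent spectral sequence collapses onto the line $s=0$ already before localizing. When $F=G_{24}$, inverting $v_1$ annihilates the $v_1$-power-torsion classes $\alpha,\widetilde\alpha,\beta$ of positive cohomological degree appearing in Theorem \ref{E1}, so by (\ref{the-local-calc-G24}) the term $v_1^{-1}H^\ast(G_{24},(E_2)_\ast/3)$ is the Laurent series ring $\FF_3((v_1^6\Delta^{-1}))[v_1^{\pm1}]$, concentrated in cohomological degree $0$. In either case the localized spectral sequence lives on the line $s=0$, so it collapses and the edge homomorphism supplies the asserted isomorphism $\pi_\ast L_{K(1)}L_{K(2)}(E_2^{hF}\wedge X/3)\cong v_1^{-1}H^0(F,(E_2)_\ast X/3)$.

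For part (b): by (\ref{the-local-calc}) and (\ref{the-local-calc-big}) the localized $E_2$-terms are $\FF_3[v_1^{\pm1}]\otimes\Lambda(v_1^{-1}b_1)$ for $F=\GG_2^1$ and $\FF_3[v_1^{\pm1}]\otimes\Lambda(v_1^{-1}b_1,\zeta)$ for $F=\GG_2$, with $v_1$, $v_1^{-1}b_1$, $\zeta$ of bidegrees $(0,4)$, $(1,4)$, $(1,0)$. Comparing total degree $t-s$ with cohomological degree $s$, one checks that in each total degree the $E_2$-term is concentrated in a single filtration: for $\GG_2$, stems $\equiv0,2,3\pmod4$ carry classes only in filtrations $0,2,1$ respectively while stems $\equiv1\pmod4$ are empty, and for $\GG_2^1$ only filtrations $0$ (stems $\equiv0\bmod4$) and $1$ (stems $\equiv3\bmod4$) occur. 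A spectral sequence whose $E_2$-page is, in every total degree, supported in a single filtration admits no nonzero differentials and has a one-step filtration on its abutment, so it collapses and converges strongly. (Alternatively one may localize the square of spectral sequences (\ref{diagram-of-ss}) and argue as in the proof of Lemma \ref{yet-to-be-pre}(b) using part (a); for reassurance, $v_1^{\pm1}$, $v_1^{-1}b_1$ and $\zeta$ are permanent cycles, detected by the self map $v_1$, by $\alpha_1\in\pi_3S^0/3$, and by $\zeta\in\pi_{-1}L_{K(2)}S^0$ respectively.)

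The main obstacle is the setup, not the collapse. One must correctly identify the abutment of the localized spectral sequence, which depends on Lemma \ref{k1-localize} — the one ingredient special to height $1$ — and one must check that the $v_1$-self map acts compatibly so that $v_1^{-1}(-)$ is exact on the spectral sequence. Strong convergence of the localized spectral sequence is then forced by the sparseness of its $E_2$-term; absent that it would follow from the horizontal vanishing line at $E_\infty$ for the descent spectral sequence established in \cite{GHMR}. Given Proposition \ref{local-e2s}, the collapse statements are immediate.
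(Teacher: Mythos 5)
Part (a) of your proposal is correct and is the paper's argument: for finite $F$ the descent spectral sequence has a horizontal vanishing line at $E_\infty$ by \cite{GHMR} \S 3, so inverting $v_1$ preserves convergence, and Proposition \ref{local-e2s}(a) concentrates the localized $E_1$-term on the line $s=0$, whence the edge map is an isomorphism.

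The gap is in your primary argument for part (b), at the word ``converges''. For $F=\GG_2^1$ or $\GG_2$ the unlocalized descent spectral sequence has classes in arbitrarily high cohomological filtration (these groups have infinite cohomological dimension), and $v_1^{-1}(-)$ is a filtered colimit, which need not commute with the limits implicit in forming $E_\infty$ and in the completeness of the filtration on the abutment. Sparseness of the localized $E_2$-page gives collapse, but collapse does not by itself show that the localized spectral sequence abuts to $\pi_\ast L_{K(1)}(E_2^{hF}\wedge X/3)$: convergence of $v_1$-localized Adams-type spectral sequences is exactly the delicate point of \cite{MillerJPAA}, and it is not a formal consequence of a vanishing line on the localized $E_2$-term. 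Your fallback --- that convergence ``would follow from the horizontal vanishing line at $E_\infty$ established in \cite{GHMR}'' --- does not apply here either, since \cite{GHMR} \S 3 supplies such vanishing lines only for the finite subgroups. The correct route is the one you relegate to a parenthesis, and it is the paper's proof (by transposing the proof of Lemma \ref{yet-to-be-pre} with $-\otimes\QQ$ replaced by $v_1^{-1}(-/3)$): localize the square of spectral sequences (\ref{diagram-of-ss}); the resolution and tower spectral sequences have only four columns, so their localizations converge for trivial reasons; the localized descent spectral sequences for $G_{24}$ and $SD_{16}$ converge by part (a); matching $d_1$'s identifies the $E_2$-term of the localized tower spectral sequence with $v_1^{-1}H^p(\GG_2^1,(E_2)_tX/3)$, which collapses by the sparseness you computed from Proposition \ref{local-e2s}(b) and hence computes the abutment; only then does convergence and collapse of the localized descent spectral sequence for $\GG_2^1$ (and similarly $\GG_2$) follow. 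Promote your parenthesis to the proof and drop the claim that sparseness alone forces convergence.
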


\begin{proof} The proof of Lemma \ref{yet-to-be-pre} goes through
{\it mutatis mutandis}. We need only replace the localization
$H^\ast(F,M) \mapsto H^\ast(F,M) \otimes \QQ$ with the localization
$$
H^\ast(F,M) \longmapsto v_1^{-1}H^\ast(F,M/3)
$$
throughout, and use Theorem \ref{E1diff} in place of
Proposition \ref{rational} and Theorem \ref{rational2}.
\end{proof}

We now have the following remarkable calculation.

\begin{prop}\label{L1v0} Let $X\in \kappa_2$.
Then the $K(1)$-localized Hurewicz homomorphism  
$$
\pi_0 L_{K(1)}X/3 \longr \pi_0L_{K(1)}L_{K(2)}(E_2 \wedge X/3)
$$
is injective. Any choice of isomorphism $\Et_\ast \cong \Et_\ast X$ of
twisted $\GG_2$-modules uniquely defines a generator of
$$
\pi_0(L_{K(1)}L_{K(2)}(E_2^{h\GG_2^1}\wedge X/3)) \cong (v_1^{-1}
H^0(\GG^1_2,(E_2)_*/3)_0 \cong \FF_3.
$$
This generator extends uniquely to a weak equivalence
$$
L_{K(1)}S^0/3 \simeq L_{K(1)}L_{K(2)}(E_2^{h\GG_2^1}\wedge X/3).
$$
\end{prop}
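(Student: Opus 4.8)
Write $Y = L_{K(1)}L_{K(2)}(E_2^{h\GG_2^1}\wedge X/3)$. The plan is to compute $\pi_\ast Y$ and $\pi_\ast L_{K(1)}S^0/3$, to realize the asserted canonical generator of $\pi_0 Y$ by a map $L_{K(1)}S^0/3\to Y$, and to recognize that map as a weak equivalence. By Lemma \ref{yet-to-be-pre-k1}(b) with $F=\GG_2^1$ the localized spectral sequence converges and collapses, so $\pi_\ast Y\cong v_1^{-1}H^\ast(\GG_2^1,(E_2)_\ast X/3)$. A choice of isomorphism $f\colon (E_2)_\ast\to(E_2)_\ast X$ of twisted $\GG_2$-modules is in particular a $v_1$-linear isomorphism of twisted $\GG_2^1$-modules, hence induces an isomorphism on $v_1^{-1}H^\ast(\GG_2^1,-)$; together with Proposition \ref{local-e2s}(b) this identifies $\pi_\ast Y$ with $\FF_3[v_1^{\pm 1}]\otimes\Lambda(v_1^{-1}b_1)$, where $|v_1|=4$ and $|v_1^{-1}b_1|=3$. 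In particular $\pi_0 Y\cong(v_1^{-1}H^0(\GG_2^1,(E_2)_\ast X/3))_0\cong\FF_3$, with canonical generator $g=g(f)$ the image of $f(1)$, while $\pi_1 Y=0$ and $\pi_\ast Y$ is concentrated with rank one in degrees $\equiv 0,3\bmod 4$. On the other hand, by Lemma \ref{k1-localize}, $L_{K(1)}S^0/3=v_1^{-1}(S^0/3)$, whose homotopy is, by classical computations, the $v_1$-periodic pattern $\FF_3[v_1^{\pm 1}]\otimes\Lambda(\alpha_1)$ with $|\alpha_1|=3$ (the degree-$3$ generator being the image of $\alpha_1\in\pi_3 S^0$). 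Thus $\pi_\ast Y$ and $\pi_\ast L_{K(1)}S^0/3$ are abstractly isomorphic.

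To construct the map, note that $3g=0$ since $\pi_0 Y$ is $3$-torsion, so $g\colon S^0\to Y$ factors through the pinch map $S^0\to S^0/3$; as $\pi_1 Y=0$, the factorization $\bar g\colon S^0/3\to Y$ is unique. Because $Y$ is $K(1)$-local and $S^0/3$ has type $1$, so that $L_{K(1)}(S^0/3)=L_1(S^0/3)=v_1^{-1}(S^0/3)=L_{K(1)}S^0/3$ by Lemma \ref{k1-localize}, the map $\bar g$ extends uniquely to a map
$$
\phi\colon L_{K(1)}S^0/3\longrightarrow Y
$$
which carries the reduction of the unit to $g$; in particular $\phi$ is an isomorphism on $\pi_0$.

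It remains to show $\phi$ is a weak equivalence; as both sides are $K(1)$-local it suffices that $\phi_\ast$ be an isomorphism on homotopy. Both homotopy groups are $v_1$-periodic — the source by construction, and $Y=Z\wedge S/3$ with $Z=L_{K(1)}L_{K(2)}(E_2^{h\GG_2^1}\wedge X)$, so the chosen $v_1$-self map of $S/3$ induces a self-equivalence of $Y$ — and $\phi$ is compatible with these periodicities (the obstruction lies in $[\Sigma^4 S^0/3,Y]$ and is detected on the bottom cell in $\pi_4 Y$, where it vanishes). Hence it is enough to see $\phi_\ast$ is an isomorphism on $\pi_n$ for $n=0,1,2,3$: degrees $1,2$ are trivial, degree $0$ holds by construction, and in degree $3$ one uses that $\phi$ extends the unit, so $\phi_\ast(\alpha_1)$ is the image of $\alpha_1\in\pi_3 S^0$ under the module structure on $Y$, which by Proposition \ref{local-e2s}(b) and Proposition 1.5 of \cite{HKM} is detected by the nonzero class $v_1^{-1}b_1$. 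Therefore $\phi$ is a weak equivalence. This last step — promoting the algebraic identification of homotopy to an honest equivalence, i.e. checking that the realization of the canonical $\pi_0$-generator respects the $v_1$-self maps and is nonzero on $\pi_3$ — is the main obstacle.

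Finally, for the injectivity of the $K(1)$-localized Hurewicz homomorphism in the first assertion, the same analysis applied with $F=\GG_2$ (using Lemma \ref{yet-to-be-pre-k1}(b) for $\GG_2$ and the identification $v_1^{-1}H^\ast(\GG_2,(E_2)_\ast/3)\cong\FF_3[v_1^{\pm 1}]\otimes\Lambda(v_1^{-1}b_1,\zeta)$ from Proposition \ref{local-e2s}(b)) shows $\pi_0 L_{K(1)}X/3\cong\FF_3$, generated by a class detected in filtration zero by $f(1)$. Since $(E_2)_\ast(X/3)\cong(E_2)_\ast/3$ forces $L_{K(2)}(E_2\wedge X/3)\simeq E_2/3$, the edge homomorphism identifies $\pi_0 L_{K(1)}L_{K(2)}(E_2\wedge X/3)$ with $v_1^{-1}(E_2)_0 X/3$ and carries the generator above to the nonzero element $f(1)$; hence the Hurewicz map is injective.
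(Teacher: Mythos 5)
Your proposal is correct and follows essentially the same route as the paper: compute $\pi_\ast L_{K(1)}L_{K(2)}(E_2^{h\GG_2^1}\wedge X/3)$ via the collapsing localized descent spectral sequence of Lemma \ref{yet-to-be-pre-k1}(b) and Proposition \ref{local-e2s}, compare with Miller's computation of $\pi_\ast L_{K(1)}S/3$, and realize the canonical generator of $\pi_0$ by a map that is then seen to be an equivalence. The extra care you take in constructing $\phi$ and checking compatibility with the $v_1$-periodicities and the class $\alpha_1$ is exactly the content the paper compresses into ``the result now follows from Proposition \ref{local-e2s}.''
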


\begin{proof}  We use the localized spectral sequence 
$$
(v_1^{-1}H^\ast(\GG_2^1,(E_2)_*/3))_t \Longrightarrow 
\pi_{t-s}L_{K(1)}L_{K(2)}(E_2^{h\GG_2^1}\wedge X/3).
$$
This converges by Lemma \ref{yet-to-be-pre-k1}.b. The choice of 
isomorphism $\Et_\ast \cong \Et_\ast X$ is used to identify the
$E_2$-term. By the isomorphism of (\ref{the-local-calc}) this spectral
sequence collapses. By \cite{MillerJPAA}, we know that there is an
isomorphism
$$
\FF_3[v_1^{\pm1}] \otimes \Lambda(\alpha) \cong
\pi_\ast L_{K(1)}S/3.
$$
where $\alpha$ is the image of $\alpha_1 \in \pi_3S^0/3$.
The result now follows from Proposition \ref{local-e2s}.
\end{proof}

This result will be extended to an integral calculation in Proposition
\ref{L1v0int}.

For a complete local
ring $A$ with maximal ideal $\mm$ define
$$
A((x)) = \lim_k\ \Big\{A/\mm^k((x))\Big\}.
$$   
This a completion of the ring of Laurent series. 
Recall that $v_1 = u_1u^{-2}$ and $v_2=u^{-4}$
for the standard $p$-typical deformation of the Honda
formal group over $\Et_\ast$. As a first example, Lemma \ref{k1-localize} immediately
gives
\begin{equation}\label{k1e2}
\pi_\ast L_{K(1)}E_2 = \W((u_1))[u^{\pm 1}].
\end{equation}
We now give a calculation of $\pi_\ast L_{K(1)}E_2^{hF}$ for our two important
finite subgroups. The 
elements  $c_4$, $c_6$, $\Delta$
were all introduced in Remark \ref{integral-calc}.

\begin{prop}\label{k1-calcs} Let $X \in \kappa_2$ and fix an isomorphism
$(E_2)_\ast X \cong (E_2)_\ast$ of twisted $\GG_2$-modules.

(a) The edge homomorphism of the homotopy fixed point
spectral sequence induces an isomorphism
\begin{equation*}
\pi_\ast L_{K(1)} L_{K(2)} (E_2^{hG_{24}} \wedge X)\cong
\lim\ v_{1}^{-1} H^0(G_{24},(E_2)_\ast/3^n).
\end{equation*}
Define $b_2 = c_6/c_4$ and $j = c_4^3/\Delta$. Then these choices define an isomorphism
$$
\ZZ_3((j))[b_2^{\pm 1}] \cong \lim\ v_{1}^{-1} H^0(G_{24},(E_2)_\ast/3^n).
$$

(b) The edge homomorphism of the homotopy fixed point
spectral sequence induces an isomorphism
\begin{equation*}
\pi_\ast L_{K(1)} L_{K(2)}(E_2^{hSD_{16}}\wedge X) \cong
\lim\ v_{1}^{-1} H^0(SD_{16},(E_2)_\ast/3^n).
\end{equation*}
The element $v_1 =u_1u^{-2} \in (E_2)_\ast$ is invariant under the action
of $SD_{16}$ and gives an isomorphism
\begin{equation*}
\ZZ_3((w))[v_1^{\pm 1}] \cong \lim\ v_{1}^{-1} H^0(SD_{16},(E_2)_\ast/3^n)
\end{equation*}
where $w = v_1^4/v_2$.
\end{prop}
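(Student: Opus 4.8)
The plan is to run the localized descent‑spectral‑sequence machine of Lemma \ref{yet-to-be-pre-k1} one Moore spectrum at a time and then assemble over $n$ using Lemma \ref{k1-localize}. Fix $F \in \{G_{24}, SD_{16}\}$ and an isomorphism $(E_2)_\ast X \cong (E_2)_\ast$ of twisted $\GG_2$‑modules. By Lemma \ref{k1-localize}, $L_{K(1)}L_{K(2)}(E_2^{hF}\wedge X) = \holim_n v_1^{-1}(S/3^n \wedge L_{K(2)}(E_2^{hF}\wedge X))$. For each $n$, the mod‑$3^n$ homotopy fixed point spectral sequence for $E_2^{hF}\wedge X$ has $E_2$‑term $H^\ast(F,(E_2)_\ast X/3^n) \cong H^\ast(F,(E_2)_\ast/3^n)$, the identification coming from $X \in \kappa_2$ and the chosen isomorphism; since inverting the $v_1$‑self map is an exact filtered‑colimit operation, this yields a spectral sequence with $E_2 = v_1^{-1}H^\ast(F,(E_2)_\ast/3^n)$ converging to $\pi_\ast v_1^{-1}(S/3^n\wedge L_{K(2)}(E_2^{hF}\wedge X))$.

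The crux is that $v_1^{-1}H^\ast(F,(E_2)_\ast/3^n)$ is concentrated in cohomological degree $0$ for all $n$. For $F = SD_{16}$ this is automatic, since $|SD_{16}|$ is prime to $3$. For $F = G_{24}$ I would argue by a Bockstein induction on $n$, starting from Proposition \ref{local-e2s}(a): by Remark \ref{integral-calc} the torsion ideal $(\alpha,\beta) \subseteq H^\ast(G_{24},(E_2)_\ast)$ is annihilated by $c_4$, and the $v_1$‑self map on $S/3^n$ corresponds after $K(1)$‑localization to inverting a power of the invariant class $c_4$, whose reduction mod $3$ is $v_1^2$; hence inverting $v_1$ kills $\alpha,\beta$ and leaves $c_4^{-1}H^\ast(G_{24},(E_2)_\ast) = M_\ast[c_4^{-1}]$, which is torsion‑free and concentrated in degree $0$. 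The long exact sequences attached to $(E_2)_\ast \xrightarrow{3^n} (E_2)_\ast \to (E_2)_\ast/3^n$ then give $v_1^{-1}H^\ast(G_{24},(E_2)_\ast/3^n) \cong M_\ast[c_4^{-1}]/3^n$, again in degree $0$. Consequently all these spectral sequences collapse at $E_2 = E_\infty$ in filtration $0$, so the edge homomorphisms are isomorphisms; the transition maps of the tower $\{v_1^{-1}H^0(F,(E_2)_\ast/3^n)\}_n$ are surjective, $\lim^1$ vanishes, and passing to $\holim$ gives the asserted isomorphism with $\lim_n v_1^{-1}H^0(F,(E_2)_\ast/3^n)$.

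It remains to identify the limit. For $G_{24}$ it is the $3$‑adic completion $\lim_n M_\ast[c_4^{-1}]/3^n$; in $M_\ast[c_4^{-1}]$ the class $j = c_4^3/\Delta$ is a unit (as $\Delta$ already is), and writing $b_2 = c_6/c_4$ the relation $c_4^3 - c_6^2 = (12)^3\Delta$ becomes $b_2^2 = c_4(1 - (12)^3/j)$, where $1 - (12)^3/j$ is a unit in $\ZZ_3((j))$; hence $c_4$, $c_6 = b_2 c_4$ and $\Delta = c_4^3/j$ are all expressible in terms of $b_2^{\pm 1}$ and $j$, and a degree count ($|b_2| = 4$, $|j| = 0$) gives $\ZZ_3((j))[b_2^{\pm 1}]$, proving (a). For $SD_{16}$, $v_1 = u_1 u^{-2}$ is $SD_{16}$‑invariant and $H^0(SD_{16},(E_2)_\ast)$ is the ring computed in \cite{GHMR} \S 3; inverting $v_1$ and $3$‑completing gives $\ZZ_3((w))[v_1^{\pm 1}]$ with $w = v_1^4/v_2$, consistent mod $3$ with the isomorphism (\ref{the-local-calc-SD16}), proving (b).

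I expect the main obstacle to be the degree‑$0$ concentration in the $G_{24}$ case for $n > 1$: identifying precisely which algebraic localization of $H^\ast(G_{24},(E_2)_\ast/3^n)$ corresponds to inverting the $v_1$‑self map on $S/3^n$, checking that concentration in cohomological degree $0$ propagates all the way up the Moore‑spectrum tower, and verifying that the resulting inverse system is Mittag–Leffler. Once that is secured, the collapse, the edge‑homomorphism assertions, and convergence are formal, and the ring identifications are bookkeeping with the relations of Remark \ref{integral-calc} and the calculations of \cite{GHMR} \S 3.
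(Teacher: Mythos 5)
Your argument is correct and is essentially the paper's own proof, which compresses your Bockstein induction and passage to the homotopy inverse limit into the phrase ``Proposition \ref{local-e2s}, Lemma \ref{k1-localize}, and a five lemma argument,'' and identifies the limit ring the same way you do (via $c_4\equiv v_1^2$, $c_6\equiv v_1^3$ mod $3$ and a comparison with the mod-$3$ computation of Proposition \ref{local-e2s}). The issue you flag at the end --- matching the topological $v_1$-self-map localization with the algebraic localization inverting $c_4$ --- is real but is exactly the point the paper also leaves implicit.
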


\begin{proof} For (a), the first isomorphism follows from Proposition \ref{local-e2s},
Lemma \ref{k1-localize}, and a five lemma argument. For
the second isomorphism, we know by Remark \ref{integral-calc}
that $c_4 \equiv v_1^2$ and $c_6\equiv  v_1^3$ modulo $3$ and up to a unit.
It follows that $c_4$ in the inverse limit and that we have a map
$$
\ZZ_3((j))[b_2^{\pm 1}] \to \lim\ v_{1}^{-1} H^0(G_{24},(E_2)_\ast/3^n).
$$
By Proposition \ref{local-e2s}, this map induces an isomorphism modulo $3$ and the
result follows.

Part (b)  follows directly from \cite{GHMR} \S 3.
\end{proof} 

\begin{lem}\label{kappa-and finite}  Let $X\in \kappa_2$ and
let $F=G_{24}$ or $SD_{16}$.
Given a choice of isomorphism $\Et_\ast \cong \Et_\ast X$ of
twisted $\GG_2$-modules the image of the multiplicative unit $1$
under the isomorphisms
$$
\lim (v^{-1}H^0(F,E_\ast/3^n))_0 \cong \lim (v^{-1}H^0(F,E_\ast X/3^n))_0 \cong
\pi_0L_{K(1)}L_{K(2)}(E_2^{hF} \wedge X)
$$
extends to a weak equivalence of
$L_{K(1)}E_2^{hF}$-modules
$$
L_{K(1)}E_2^{hF} \simeq L_{K(1)}L_{K(2)}(E_2^{hF} \wedge X).
$$
\end{lem}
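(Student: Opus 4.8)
The plan is to realize the asserted equivalence as a map of $L_{K(1)}E_2^{hF}$-modules determined by the chosen unit, and then to detect that it is an equivalence after smashing with the mod-$3$ Moore spectrum, where Lemma \ref{yet-to-be-pre-k1}(a) gives complete control.

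Set $A=L_{K(1)}E_2^{hF}$ and $M=L_{K(1)}L_{K(2)}(E_2^{hF}\wedge X)$. Since $E_2^{hF}$ is a homotopy-commutative ring spectrum and $E_2^{hF}\wedge X$ is a module over it, $M$ is a module over $A$. Let $u\in\pi_0M$ be the image of the multiplicative unit $1$ of $\lim v_1^{-1}H^0(F,(E_2)_\ast/3^n)$ under the isomorphisms of the statement (the first induced by the fixed map $f\colon(E_2)_\ast\overset{\cong}{\longrightarrow}(E_2)_\ast X$ of twisted $\GG_2$-modules, the second the edge homomorphism of Proposition \ref{k1-calcs}). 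Viewing $u$ as a map $S^0\to M$, the composite
$$
\phi\colon A\simeq A\wedge S^0\overset{\mathrm{id}\wedge u}{\longrightarrow}A\wedge M\longrightarrow M
$$
is a map of $A$-modules whose restriction to the unit of $A$ is $u$; it remains to prove that $\phi$ is a weak equivalence.

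Both $A$ and $M$ are $K(1)$-local, hence so is $C:=\mathrm{cofib}(\phi)$. If $C\wedge S/3=0$ then $C\wedge S/3^n=0$ for all $n$ (induct along the standard cofibre sequences relating $S/3^{n-1}$, $S/3^n$ and $S/3$), so by Lemma \ref{k1-localize} $C=L_{K(1)}C=\holim_n v_1^{-1}S/3^n\wedge C=0$. Thus it suffices to show $\phi\wedge S/3$ is an equivalence. Because smashing with the finite spectrum $S/3$ commutes with both localizations, $A\wedge S/3\simeq L_{K(1)}L_{K(2)}(E_2^{hF}\wedge L_{K(2)}S/3)$ and $M\wedge S/3\simeq L_{K(1)}L_{K(2)}(E_2^{hF}\wedge X/3)$; applying Lemma \ref{yet-to-be-pre-k1}(a) to $L_{K(2)}S^0\in\kappa_2$ and to $X$ respectively, the edge homomorphisms identify $\pi_\ast(A\wedge S/3)$ with the ring $v_1^{-1}H^0(F,(E_2)_\ast/3)$ and $\pi_\ast(M\wedge S/3)$ with $v_1^{-1}H^0(F,(E_2)_\ast X/3)$, compatibly with the $L_{K(1)}E_2^{hF}$-module structures. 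Since $3$ acts nullhomotopically on $S/3$ at the odd prime $3$, the map $\pi_\ast(\phi\wedge S/3)$ is $v_1^{-1}H^0(F,(E_2)_\ast/3)$-linear. Its target is free of rank one: $f$ reduces to an isomorphism $(E_2)_\ast/3\cong(E_2)_\ast X/3$ of $(E_2)_\ast/3[F]$-modules, so the invariant element $f(1)$ is a free generator of $H^0(F,(E_2)_\ast X/3)$ over $H^0(F,(E_2)_\ast/3)$, and remains so after inverting $v_1$. By construction $u$ corresponds to $f(1)$ under the edge-homomorphism identification, so $\pi_\ast(\phi\wedge S/3)$ sends the generator $1$ to the free generator $f(1)$; hence it is an isomorphism, $\phi\wedge S/3$ is an equivalence, and therefore so is $\phi$.

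The main obstacle is bookkeeping rather than substance: one must check that the stated localizations commute with $-\wedge S/3$ as used above, and track that the edge homomorphisms of Proposition \ref{k1-calcs} and Lemma \ref{yet-to-be-pre-k1}(a) are maps of $\pi_\ast A$-modules carrying the chosen unit to a free generator. Beyond that, no new information about $\GG_2$ or its cohomology is needed -- only Propositions \ref{local-e2s} and \ref{k1-calcs} and Lemma \ref{yet-to-be-pre-k1}.
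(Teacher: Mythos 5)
Your proposal is correct and follows essentially the same route as the paper: the paper also builds the module map from the unit via Proposition \ref{k1-calcs}, proves it is an equivalence modulo $3^n$ by induction and a five lemma (your cofiber formulation of the same induction, with the mod-$3$ base case supplied by Proposition \ref{local-e2s}/Lemma \ref{yet-to-be-pre-k1}(a)), and then concludes with the homotopy inverse limit description of $L_{K(1)}$ from Lemma \ref{k1-localize}. Your write-up just makes explicit the mod-$3$ base case and the passage from $S/3$ to $S/3^n$ that the paper leaves implicit.
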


\begin{proof} Let $Z = L_{K(2)}(E_2^{hF}\wedge X)$. 
By Proposition \ref{k1-calcs}, the given isomorphism of Morava modules determines
a map $g:S^0 \to L_{K(1)} Z$.  
By induction and a five lemma argument, the induced map
$S^0 \to   L_{K(1)} Z \wedge S/3^n$
extends to a weak equivalence of $L_{K(1)}E_2^{hF}$-modules
$$
L_{K(1)}E_2^{hF}/3^n \simeq L_{K(1)} Z \wedge S/3^n
$$
and the result follows from Proposition \ref{k1-localize}.
\end{proof}

\begin{thm}\label{L1v0int} Let $X\in \kappa_2$.
Then the localized Hurewicz homomorphism 
$$
\pi_0 L_{K(1)}X \longr \pi_0L_{K(1)}L_{K(2)}(E_2 \wedge X)
$$
is injective. Given a choice of isomorphism $\Et_\ast \cong \Et_\ast X$ of
twisted $\GG_2$-modules the image of the multiplicative unit $1$
under the isomorphisms
$$
\ZZ_3 \cong \lim(v_1^{-1} H^0(\GG^1_2,(E_2)_*/3^n))_0 \cong
\pi_0(L_{K(1)}L_{K(2)}(E_2^{h\GG_2^1}\wedge X))
$$
extends to a weak equivalence of
$L_{K(1)}S^0$-modules
$$
L_{K(1)}S^0 \simeq L_{K(1)}L_{K(2)}(E_2^{h\GG_2^1}\wedge X).
$$
\end{thm}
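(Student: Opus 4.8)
The plan is to run the $K(1)$-local analogue of Theorem \ref{yet-to-be}, bootstrapping from the mod-$3$ computation of Proposition \ref{L1v0} in the same way that the proof of Lemma \ref{kappa-and finite} bootstraps from Proposition \ref{k1-calcs}. Write $Z=L_{K(2)}(E_2^{h\GG_2^1}\wedge X)$, so the target of the asserted equivalence is $L_{K(1)}Z$. Since $S/3^n$ is finite and $3$-torsion, smashing with it commutes with $L_{K(2)}$ and kills $L_0$, so Lemma \ref{k1-localize} gives
$$
L_{K(1)}S^0=\holim_n L_{K(1)}S^0\wedge S/3^n,\qquad L_{K(1)}Z=\holim_n L_{K(1)}Z\wedge S/3^n,
$$
with $L_{K(1)}Z\wedge S/3^n\simeq L_{K(1)}L_{K(2)}(E_2^{h\GG_2^1}\wedge X\wedge S/3^n)$. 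Hence it suffices to produce one map $\bar g\colon L_{K(1)}S^0\to L_{K(1)}Z$ whose smash product with each $S/3^n$ is a weak equivalence, and then pass to the homotopy limit.

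First I would identify the relevant homotopy groups and construct $\bar g$. By Lemma \ref{yet-to-be-pre-k1}(b), extended from $S/3$ to $S/3^n$ coefficients by Theorem \ref{E1diff} and a Bockstein/five lemma argument as in the proof of Proposition \ref{k1-calcs}, the localized descent spectral sequence
$$
\bigl(v_1^{-1}H^s(\GG_2^1,(E_2)_\ast X/3^n)\bigr)_t\Longrightarrow\pi_{t-s}\bigl(L_{K(1)}Z\wedge S/3^n\bigr)
$$
converges and collapses, and the given isomorphism $\Et_\ast X\cong\Et_\ast$ of twisted $\GG_2$-modules identifies its $E_2$-page with $v_1^{-1}H^\ast(\GG_2^1,(E_2)_\ast/3^n)$. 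Using Proposition \ref{local-e2s}(b) together with the fact that the unit class has trivial Bockstein, one finds $(v_1^{-1}H^0(\GG_2^1,(E_2)_\ast/3^n))_0\cong\ZZ/3^n$ generated by $1$, hence $\lim_n(v_1^{-1}H^0(\GG_2^1,(E_2)_\ast/3^n))_0\cong\ZZ_3$ with vanishing $\lim^1$, so that $\pi_0L_{K(1)}Z\cong\ZZ_3$. Let $g\colon S^0\to L_{K(1)}Z$ realize the image of the multiplicative unit under the isomorphisms in the statement; since $L_{K(1)}Z$ is $K(1)$-local, $g$ factors uniquely through $\bar g\colon L_{K(1)}S^0\to L_{K(1)}Z$, a map of $L_{K(1)}S^0$-modules.

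Next I would show $\bar g\wedge S/3^n$ is a weak equivalence for every $n$, by induction on $n$. For $n=1$, the map $\bar g\wedge S/3$ induces on $\pi_0$ the mod-$3$ reduction of the generator of $\pi_0L_{K(1)}Z$, i.e.\ the canonical generator of $\pi_0(L_{K(1)}Z\wedge S/3)\cong\FF_3$, so by the uniqueness clause of Proposition \ref{L1v0} it is the weak equivalence constructed there. For the inductive step, smashing the single map $\bar g$ with the cofiber sequence $S/3\to S/3^{n+1}\to S/3^n$ yields a map of cofiber sequences whose outer two maps $\bar g\wedge S/3$ and $\bar g\wedge S/3^n$ are weak equivalences by the base case and the inductive hypothesis; hence so is $\bar g\wedge S/3^{n+1}$. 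Passing to the homotopy limit, $\bar g=\holim_n(\bar g\wedge S/3^n)$ is a weak equivalence of $L_{K(1)}S^0$-modules, and it is the unique one extending the chosen generator because maps out of the unit are classified by $\pi_0$; this is the asserted equivalence $L_{K(1)}S^0\simeq L_{K(1)}L_{K(2)}(E_2^{h\GG_2^1}\wedge X)$.

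Finally, the injectivity of the localized Hurewicz homomorphism follows from this equivalence. The product splitting $\GG_2\cong\GG_2^1\times\ZZ_3$ of Remark \ref{split-g2} yields a fiber sequence $E_2^{h\GG_2}\to E_2^{h\GG_2^1}\xrightarrow{\psi-1}E_2^{h\GG_2^1}$; smashing with $X$, applying $L_{K(2)}$ (which returns $L_{K(1)}X$ on the left, since $L_{K(2)}(E_2^{h\GG_2}\wedge X)\simeq L_{K(2)}(L_{K(2)}S^0\wedge X)\simeq X$), and then $L_{K(1)}$ gives a fiber sequence $L_{K(1)}X\to L_{K(1)}Z\xrightarrow{\psi-1}L_{K(1)}Z$. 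Since $\pi_1L_{K(1)}Z\cong\pi_1L_{K(1)}S^0=0$ at $p=3$, the long exact sequence forces $\pi_0L_{K(1)}X\to\pi_0L_{K(1)}Z\cong\ZZ_3$ to be injective; composing with the inclusion $\ZZ_3=\pi_0L_{K(1)}Z\hookrightarrow\pi_0L_{K(1)}L_{K(2)}(E_2\wedge X)$ — the latter group torsion-free by (\ref{k1e2}) and the Morava-module isomorphism — induced by $E_2^{h\GG_2^1}\to E_2$, and identifying this composite with the Hurewicz map by naturality, proves the claim. The genuinely delicate point is the compatibility of the mod-$3^n$ equivalences as $n$ varies, so that the homotopy-limit step yields an actual equivalence; this is exactly what is circumvented by building them all as $\bar g\wedge S/3^n$ for a single map $\bar g$, which makes the coherence automatic and reduces the whole argument to the case $n=1$ treated in Proposition \ref{L1v0}.
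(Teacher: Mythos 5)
Your argument is correct, and its core coincides with the paper's: both proofs produce a single map $\bar g\colon L_{K(1)}S^0 \to L_{K(1)}Z$, check that $\bar g\wedge S/3$ is an equivalence by Proposition \ref{L1v0}, induct up the cofiber sequences $S/3\to S/3^{n}\to S/3^{n-1}$, and pass to the homotopy limit using Lemma \ref{k1-localize}. The one genuine difference is how the comparison map is built. The paper applies $L_{K(1)}L_{K(2)}(-\wedge X)$ to the tower of Remark \ref{GHMRshort}, identifies the fibers as suspensions of $L_{K(1)}E_2^{hG_{24}}$ and $L_{K(1)}E_2^{hSD_{16}}$ via Lemma \ref{kappa-and finite}, and lifts the unit map $S^0\to L_{K(1)}L_{K(2)}(E_2^{hG_{24}}\wedge X)$ up the tower; the obstructions vanish because Proposition \ref{k1-calcs} places the homotopy of the fibers in degrees divisible by $4$. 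You instead compute $\pi_0L_{K(1)}Z\cong\ZZ_3$ directly from a mod-$3^n$ extension of the localized descent spectral sequence of Lemma \ref{yet-to-be-pre-k1}(b) and then invoke the universal property of $K(1)$-localization to factor the resulting unit map through $L_{K(1)}S^0$. Both routes work, but note the trade-off: the paper only ever needs integral information about the finite subgroups, where $v_1^{-1}H^{>0}$ vanishes and the five-lemma step is painless, whereas you need the mod-$3^n$ statement for $\GG_2^1$ itself, where $v_1^{-1}H^1\neq 0$ (the class $v_1^{-1}b_1$); your identification $(v_1^{-1}H^0(\GG_2^1,(E_2)_*/3^n))_0\cong\ZZ/3^n$ therefore genuinely relies on the degree bookkeeping you allude to --- $v_1^{-1}b_1$ lives in internal degrees divisible by $4$, so contributes nothing in total degrees $0$ and $1$, and the triviality of the Bockstein on the unit closes the induction. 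Your final paragraph also supplies a proof of the injectivity clause via the fiber sequence (\ref{keyfiber}) and $\pi_1L_{K(1)}S^0=0$, a point the paper's proof leaves implicit.
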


\begin{proof}  Let $Y = L_{K(2)}(E_2^{h\GG_2^1}\wedge X)$.
Take the tower of \ref{GHMRshort} and apply the localization functor
$L_{K(1)}L_{K(2)}(-\wedge X)$ to produce a tower with homotopy inverse 
limit $L_{K(1)}Y$.  By Lemma \ref{kappa-and finite}, the fibers are all
of the form $\Sigma^{8k} L_{K(1)}E_2^{hF}$ with $F=G_{24}$
or $F=SD_{16}$. Using Proposition \ref{k1-calcs}, we then see that the
map
$$
S^0 \to L_{K(1)}L_{K(2)}(E_2^{hG_{24}} \wedge X) \simeq
L_{K(1)}E_2^{hG_{24}}
$$
induced by  the given isomorphism of Morava modules 
lifts uniquely to a map 
$$
\iota: L_{K(1)}S^0 \to L_{K(1)}Y.
$$
By Proposition \ref{L1v0} this induces a weak equivalence
$$
L_{K(1)} S/3 \simeq L_{K(1)}Y \wedge S/3.
$$
Then, using the natural fiber sequence
$$
L_{K(1)} S/3 \wedge Y \to L_{K(1)}S/3^n \wedge Y \to 
L_{K(1)} S/3^{n-1} \wedge Y,
$$
induction, and Lemma \ref{k1-localize}, we obtain
the desired weak equivalence.
\end{proof}

The following is an immediate consequence of the Theorem \ref{L1v0int} which we record for
later use.

\begin{cor}\label{mod-ver-int} Let Let $X\in \kappa_2$.
Given a choice of isomorphism $\Et_\ast \cong \Et_\ast X$ of
twisted $\GG_2$-modules the image of the multiplicative unit $1$
under the isomorphisms
$$
\ZZ_3 \cong \lim(v_1^{-1} H^0(\GG^1_2,(E_2)_*/3^n))_0 \cong
\pi_0(L_{K(1)}L_{K(2)}(E_2^{h\GG_2^1}\wedge X))
$$
extends to a weak equivalence of
$L_{K(1)}E_2^{h\GG_2^1}$-modules
$$
L_{K(1)}E_2^{h\GG_2^1} \simeq L_{K(1)}L_{K(2)}(E_2^{h\GG_2^1}\wedge X).
$$
\end{cor}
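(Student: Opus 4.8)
The plan is to upgrade the weak equivalence of Theorem \ref{L1v0int} to an $L_{K(1)}E_2^{h\GG_2^1}$-module equivalence by a short retraction argument, with one mod-$3$ check at the end. Write $Y=L_{K(2)}(E_2^{h\GG_2^1}\wedge X)$, let $g\in\pi_0 L_{K(1)}Y$ be the distinguished generator of the statement (the image of $1$), and let $\iota\colon L_{K(1)}S^0\to L_{K(1)}Y$ be the equivalence of $L_{K(1)}S^0$-modules produced by Theorem \ref{L1v0int}, so that $\iota$ carries $1$ to $g$. Since $E_2^{h\GG_2^1}$ is an ($E_\infty$) ring spectrum and $Y$ inherits its $E_2^{h\GG_2^1}$-module structure from the first smash factor (and $L_{K(2)}$, $L_{K(1)}$ preserve module structures), $L_{K(1)}Y$ is a module over $L_{K(1)}E_2^{h\GG_2^1}$. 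As $L_{K(1)}E_2^{h\GG_2^1}$ is the free module of rank one, the class $g$ is realized by an $L_{K(1)}E_2^{h\GG_2^1}$-module map
$$
\phi\colon L_{K(1)}E_2^{h\GG_2^1}\longrightarrow L_{K(1)}Y ,
$$
and this $\phi$ is the candidate equivalence of the corollary.

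First I would check that $\phi\circ u=\iota$, where $u\colon L_{K(1)}S^0\to L_{K(1)}E_2^{h\GG_2^1}$ is the unit: both sides are $L_{K(1)}S^0$-module maps out of $L_{K(1)}S^0$ (the map $\phi$ is $L_{K(1)}E_2^{h\GG_2^1}$-linear, hence $L_{K(1)}S^0$-linear, and $u$ is a ring map), and both carry $1$ to $g$, so they agree. Hence $u\circ\iota^{-1}$ is a section of $\phi$, so $\phi$ is a split epimorphism and there is a splitting $L_{K(1)}E_2^{h\GG_2^1}\simeq L_{K(1)}Y\vee F$ with $F$ the fiber of $\phi$; being the fiber of a map between $K(1)$-local spectra, $F$ is $K(1)$-local, so $F$ is contractible if and only if $F\wedge S/3=0$. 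Now $F\wedge S/3$ is a direct summand of $L_{K(1)}E_2^{h\GG_2^1}\wedge S/3=L_{K(1)}\big(L_{K(2)}(E_2^{h\GG_2^1}\wedge S^0/3)\big)$, which by Proposition \ref{L1v0} applied with $X=S^0$ is equivalent to $L_{K(1)}S^0/3$, whose homotopy is the degreewise finite-dimensional $\FF_3$-vector space $\FF_3[v_1^{\pm1}]\otimes\Lambda(\alpha)$; applying Proposition \ref{L1v0} to $X$ as well gives $\pi_\ast(L_{K(1)}Y\wedge S/3)\cong\pi_\ast(L_{K(1)}E_2^{h\GG_2^1}\wedge S/3)$, so the direct-sum decomposition $\pi_\ast(L_{K(1)}E_2^{h\GG_2^1}\wedge S/3)\cong\pi_\ast(L_{K(1)}Y\wedge S/3)\oplus\pi_\ast(F\wedge S/3)$ forces $\pi_\ast(F\wedge S/3)=0$. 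Therefore $F\wedge S/3=0$, so $F$ is contractible and $\phi$ is an equivalence of $L_{K(1)}E_2^{h\GG_2^1}$-modules carrying $1$ to $g$, which is exactly the assertion.

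Since the substantive input — that $L_{K(1)}Y$ has the homotopy type of $L_{K(1)}S^0$ — is already contained in Theorem \ref{L1v0int}, there is no real obstacle here; the only point requiring attention is the bookkeeping that the module-extended map $\phi$ is not merely split epi but an equivalence, and this is precisely what the degreewise finiteness of $\pi_\ast L_{K(1)}S^0/3$ provides. As an alternative, slightly more laborious route, one could instead rerun the proof of Theorem \ref{L1v0int} — applying $L_{K(1)}L_{K(2)}(-\wedge X)$ to the tower of Remark \ref{GHMRshort} — while tracking $L_{K(1)}E_2^{h\GG_2^1}$-module structures throughout and invoking the module form of the finite-subgroup comparison already recorded in Lemma \ref{kappa-and finite}, but the retraction argument above is shorter and genuinely immediate from Theorem \ref{L1v0int}.
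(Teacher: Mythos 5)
Your argument is correct and follows the paper's intended route: the paper records this corollary as an immediate consequence of Theorem \ref{L1v0int} without further proof, and your retraction argument (realize the generator by an $L_{K(1)}E_2^{h\GG_2^1}$-module map via freeness, split off $L_{K(1)}Y$ using the underlying equivalence $\iota$, and kill the complementary summand by the mod-$3$ count from Proposition \ref{L1v0}) is a valid way of making that ``immediate'' step precise. No gaps.
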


We now want to extend Theorem \ref{L1v0int}  to the
sphere itself. Recall that there is a fiber sequence
\begin{equation*}
\xymatrix{
L_{K(2)}S^0 \rto & E_2^{h\GG_2^1} \rto^{\psi-1} &
E_2^{h\GG_2^1} }
\end{equation*}
where $\psi$ is a topological generator of the central $\ZZ_3 \subseteq
\GG_2$. For any $K(2)$-local $X$, 
we may apply the functor $L_{K(2)}((-)\wedge X)$ to get a fiber sequence
\begin{equation}\label{keyfiber}
\xymatrix{
X \rto & L_{K(2)}(E_2^{h\GG_2^1}\wedge X) \rto^{\psi-1} &
L_{K(2)}(E_2^{h\GG_2^1}\wedge X).
 }
\end{equation}

\begin{thm}\label{L1}  a) Let $X \in \kappa_2$. Given a choice of isomorphism
$\Et_\ast \cong \Et_\ast X$ of twisted $\GG_2$-modules the image of the multiplicative unit $1$
under the isomorphisms
$$
\ZZ_3 \cong \lim(v_1^{-1}H^0(\GG_2,(E_2)_*/3^n)_*)_0 \cong \pi_0L_{K(1)}X
$$
extends to a weak equivalence of $L_{K(1)}L_{K(2)}S^0$-modules  
$$
L_{K(1)}L_{K(2)}S^0 \simeq L_{K(1)}X.
$$

b) The weak equivalence $L_{K(1)}S^0 \simeq
L_{K(1)}L_{K(2)}E_2^{h\GG_2^1}$ of
Proposition \ref{L1v0int} factors
uniquely though $L_{K(1)}L_{K(2)}S^0$
and extends to a weak equivalence
$$
L_{K(1)}S^0 \vee L_{K(1)}S^{-1} \simeq L_{K(1)}L_{K(2)}S^0
$$
where $L_{K(1)}S^{-1} \to L_{K(1)}L_{K(2)}S^0$ is induced by $\zeta \in \pi_{-1}L_{K(2)}S^0$.
\end{thm}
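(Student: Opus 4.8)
The plan is to apply $L_{K(1)}$ to the fiber sequence (\ref{keyfiber}) and to exploit that the central generator $\psi\in\ZZ_3\subseteq\GG_2$ acts trivially in internal degree $0$. Throughout I use that in the $K(1)$-local category every spectrum is canonically a module over the unit $L_{K(1)}S^0$, that $\pi_0L_{K(1)}S^0=\ZZ_3$ is torsion free, and that $\pi_1L_{K(1)}S^0=0$ for $p=3$. The crucial preliminary observation is that for every $X\in\kappa_2$ — and in particular for $X=S^0$ — the self-map $\psi-1$ of $L_{K(1)}L_{K(2)}(E_2^{h\GG_2^1}\wedge X)$ is null-homotopic. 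Indeed, by Theorem \ref{L1v0int} the chosen isomorphism $(E_2)_\ast\cong(E_2)_\ast X$ of twisted modules identifies this spectrum with $L_{K(1)}S^0$ as an $L_{K(1)}S^0$-module, so $\psi-1$ becomes multiplication by a scalar $\lambda\in\pi_0L_{K(1)}S^0=\ZZ_3$. Since $\psi$ is central it acts trivially on $(E_2)_0$ (as recorded in the proof of Proposition \ref{rational}), hence — via the chosen twisted isomorphism — on $H^0(\GG_2^1,(E_2)_0X)$, and therefore on the bidegree $(0,0)$ part of the descent spectral sequence and on $\pi_0L_{K(1)}L_{K(2)}(E_2^{h\GG_2^1}\wedge X)\cong\ZZ_3$. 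Thus $\lambda=0$.

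For part (b), apply $L_{K(1)}$ to (\ref{keyfiber}) with $X=S^0$ to get a fiber sequence with total space $L_{K(1)}L_{K(2)}S^0$ mapping to $L_{K(1)}L_{K(2)}E_2^{h\GG_2^1}$ over $\psi-1$. By the observation above, the equivalence $\vartheta\colon L_{K(1)}S^0\to L_{K(1)}L_{K(2)}E_2^{h\GG_2^1}$ of Theorem \ref{L1v0int} composes trivially with $\psi-1$, hence lifts to a map $\widetilde\vartheta\colon L_{K(1)}S^0\to L_{K(1)}L_{K(2)}S^0$; the lift is unique since its indeterminacy is a quotient of $[\,\Sigma L_{K(1)}S^0,\,L_{K(1)}L_{K(2)}E_2^{h\GG_2^1}\,]\cong\pi_1L_{K(1)}S^0=0$. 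Together with the map $L_{K(1)}S^{-1}\to L_{K(1)}L_{K(2)}S^0$ classified by $\zeta\in\pi_{-1}L_{K(2)}S^0$, this gives a map $L_{K(1)}S^0\vee L_{K(1)}S^{-1}\to L_{K(1)}L_{K(2)}S^0$; to prove it is an equivalence it suffices to do so after smashing with $S/3$. Here I would use Proposition \ref{local-e2s}(b) together with the collapsing spectral sequence of Lemma \ref{yet-to-be-pre-k1}(b), giving $\pi_\ast L_{K(1)}L_{K(2)}S^0/3\cong\FF_3[v_1^{\pm1}]\otimes\Lambda(v_1^{-1}b_1,\zeta)$. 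The natural map $L_{K(1)}L_{K(2)}S^0\to L_{K(1)}L_{K(2)}E_2^{h\GG_2^1}$ induces mod $3$ the restriction $H^\ast(\GG_2,-)\to H^\ast(\GG_2^1,-)$, which kills the $\zeta$-divisible summand; since the composite of $\widetilde\vartheta$ with this map is the isomorphism $\vartheta$, the map $\widetilde\vartheta$ carries $\pi_\ast L_{K(1)}S^0/3$ isomorphically onto a complement of the $\zeta$-divisible summand, while multiplication by $\zeta$ carries $\pi_\ast L_{K(1)}S^{-1}/3$ isomorphically onto the $\zeta$-divisible summand itself (using that $v_1^{-1}b_1=\alpha$ detects $\alpha_1$, Proposition \ref{local-e2s}(b)). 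Hence the combined map is an isomorphism on mod-$3$ homotopy, so a $K(1)$-local equivalence.

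For part (a), apply $L_{K(1)}$ to (\ref{keyfiber}) for a general $X\in\kappa_2$; by the preliminary observation $\psi-1$ is null on $L_{K(1)}L_{K(2)}(E_2^{h\GG_2^1}\wedge X)\simeq L_{K(1)}S^0$, so the fiber sequence splits as $L_{K(1)}X\simeq L_{K(1)}S^0\vee L_{K(1)}S^{-1}$, which by part (b) is $L_{K(1)}L_{K(2)}S^0$. To upgrade this to the stated equivalence of $L_{K(1)}L_{K(2)}S^0$-modules, note that $L_{K(1)}L_{K(2)}S^0$ is an $E_\infty$-ring and that the chosen isomorphism $(E_2)_\ast\cong(E_2)_\ast X$ and the displayed identifications single out a class $1\in\pi_0L_{K(1)}X$, which is therefore represented by a module map $\mu\colon L_{K(1)}L_{K(2)}S^0\to L_{K(1)}X$. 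Smashing with $S/3$, the chosen twisted isomorphism turns the cup-product module structure on $v_1^{-1}H^\ast(\GG_2,(E_2)_\ast X/3)$ into that of $\FF_3[v_1^{\pm1}]\otimes\Lambda(v_1^{-1}b_1,\zeta)$ over itself, so $\pi_\ast L_{K(1)}X/3$ becomes free of rank one over $\pi_\ast L_{K(1)}L_{K(2)}S^0/3$ with the chosen class a generator; since $\mu$ sends the unit to this generator, $\mu\wedge S/3$ is an isomorphism, hence $\mu$ is a $K(1)$-local equivalence.

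The main obstacle is the preliminary observation. Identifying $\psi-1$ with a scalar uses that maps between modules over the $K(1)$-local sphere are detected on $\pi_0$ of the unit, and showing the scalar vanishes reduces to the triviality of the action of the center of $\GG_2$ on $(E_2)_0$. Once $\psi-1$ is known to vanish, the remaining work — splitting (\ref{keyfiber}) and matching the two wedge summands against $\widetilde\vartheta$ and multiplication by $\zeta$ — is bookkeeping with the mod-$3$ calculations of \S3 and Proposition \ref{local-e2s}, the only delicate point being to keep track of which summand is which.
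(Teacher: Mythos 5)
Your proposal is correct and follows essentially the same route as the paper: apply $L_{K(1)}$ to the fiber sequence (\ref{keyfiber}), identify the middle terms with $L_{K(1)}S^0$ via Theorem \ref{L1v0int} and Corollary \ref{mod-ver-int}, and use that $\psi$ fixes the unit on $\pi_0$ to dispose of $\psi-1$. The only cosmetic difference is that in part (a) the paper obtains the equivalence directly as the induced map on fibers of a commuting square of $L_{K(1)}E_2^{h\GG_2^1}$-module maps, whereas you first split both fiber sequences and then separately produce the module map $\mu$ and verify it on mod-$3$ homotopy.
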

 
\begin{proof} Let $f:L_{K(1)}E_2^{h\GG_2^1} \to L_{K(1)}L_{K(2)}(E_2^{h\GG_2^1} \wedge X)$
be the equivalence of Corollary \ref{mod-ver-int}.
Since $\psi:E_2^{h\GG_2^1} \to E_2^{h\GG_2^1}$
is a morphism of ring spectra, we get a diagram of $L_{K(1)}E_2^{h\GG_2^1}$-module
maps
$$
\xymatrix{
L_{K(1)}E_2^{h\GG_2^1} \rto^-f \dto_{\psi - 1} &
L_{K(1)}L_{K(2)}(E_2^{h\GG_2^1} \wedge X) \dto^{(\psi-1) \wedge X}\\
L_{K(1)}E_2^{h\GG_2^1} \rto^-f &
L_{K(1)}L_{K(2)}(E_2^{h\GG_2^1} \wedge X).
}
$$
By Theorem \ref{L1v0int}, there is an equivalence
$L_{K(1)}S^0 \simeq L_{K(1)}E_2^{h\GG_2^1}$.
Hence, to check that the diagram commutes, we
need only verify that if commutes after applying
$\pi_0$, and this is obvious. Part (a) follows.

We now prove part (b). Let 
$
f_0:L_{K(1)}S^0 \longr L_{K(2)}E_2^{h\GG_2^1}
$
be the equivalence, as in Theorem \ref{L1v0int}. The composition $(\psi-1)f_0$ is zero, 
as $\psi$ induces a ring map on $(E_2)_0$. Because
$\pi_1L_{K(1)}S^0 = 0$, $f_0$ lifts uniquely to a map $f:L_{K(1)}S^0 \to
L_{K(1)}L_{K(2)}S^0$ and we get a weak equivalence
$$
f \vee g: L_{K(1)}S^0 \vee L_{K(1)}S^{-1} \longr L_{K(1)}L_{K(2)}S^0
$$
where $g$ is the desuspension of the composition
$$
\xymatrix{
L_{K(1)}S^0 \rto^-{f_0}_-{\simeq} &L_{K(1)}E_2^{h\GG_2^1} \rto
& \Sigma L_{K(1)}L_{K(2)}S^0.
}
$$
As $\zeta$ is defined to be the image of unit in $\pi_0E_2^{h\GG_2^1}$ in
$\pi_{-1}L_{K(2)}S^0$, the result follows.
\end{proof}

We now come to our main theorems. 

\begin{thm}\label{chrom-split-1} Let $X \in \kappa_2$. Then the
localized Hurewicz homomorphism  
$$
\pi_0L_1X \longr \pi_0L_1L_{K(2)}(E_2 \wedge X)
$$
is injective. A choice
of isomorphism $f:(E_2)_\ast \to (E_2)_\ast X$  determines a generator 
of $\pi_0L_1X \cong \ZZ_3$. This generator extends
uniquely to a weak equivalence of $L_1L_{K(2)}S^0$-modules
$$
L_1L_{K(2)}S^0 \simeq L_1X.
$$
\end{thm}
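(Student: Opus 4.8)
The plan is to analyze $L_1X$ through the chromatic fracture square~\eqref{chrm-sq}, whose other three corners are already understood: $L_0X$ by Theorem~\ref{yet-to-be}, $L_{K(1)}X$ by Theorem~\ref{L1}, and hence $L_0L_{K(1)}X$ by applying the rationalization $L_0$ to the equivalence for $L_{K(1)}X$. I would begin by recording the relevant homotopy. By Theorems~\ref{ratsphere} and~\ref{yet-to-be}, $\pi_\ast L_0X\cong\pi_\ast L_0L_{K(2)}S^0=\Lambda_{\QQ_3}(\zeta,e)$ with $|\zeta|=-1$ and $|e|=-3$; by Theorem~\ref{L1}, $L_{K(1)}X\simeq L_{K(1)}L_{K(2)}S^0\simeq L_{K(1)}S^0\vee L_{K(1)}S^{-1}$, and the rational homotopy of $L_{K(1)}S^0$ is concentrated in degrees $0$ and $-1$, so that $\pi_\ast L_0L_{K(1)}X\cong\pi_\ast L_{K(1)}X\otimes\QQ$ satisfies $\pi_0L_0L_{K(1)}X\cong\QQ_3$ and $\pi_1L_0L_{K(1)}X=0$.

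Next, fixing the chosen isomorphism $f\colon(E_2)_\ast\to(E_2)_\ast X$, I would normalize the three corner equivalences so that each carries the multiplicative unit to the multiplicative unit, as in Theorems~\ref{yet-to-be} and~\ref{L1}(a). The crux is to check that these fit together into a homotopy commutative map of cospans from $\{\,L_0L_{K(2)}S^0\to L_0L_{K(1)}L_{K(2)}S^0\leftarrow L_{K(1)}L_{K(2)}S^0\,\}$ to $\{\,L_0X\to L_0L_{K(1)}X\leftarrow L_{K(1)}X\,\}$. The square over the $L_{K(1)}$-corner commutes by naturality of localization, since by construction the equivalence over $L_0L_{K(1)}X$ is $L_0$ of the one over $L_{K(1)}X$. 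For the square over the $L_0$-corner, both composites $L_0L_{K(2)}S^0\to L_0L_{K(1)}X$ are maps of $L_0L_{K(2)}S^0$-modules out of the free rank-one module, hence are determined up to homotopy by the image of the generator in $\pi_0L_0L_{K(1)}X$; that image is the unit in both cases, because the distinguished units in $\pi_0L_0X$, $\pi_0L_{K(1)}X$ and $\pi_0L_0L_{K(1)}X$ are each the unique preimage of $f(1)$ under the corresponding Hurewicz maps into $(E_2)_\ast(-)$, which are injective on $\pi_0$ by Theorems~\ref{yet-to-be} and~\ref{L1v0int} and commute with the localization maps. Passing to homotopy pullbacks then produces a weak equivalence $L_1L_{K(2)}S^0\simeq L_1X$; since every arrow in the two cospans is a module map, after restriction of scalars along the relevant ring maps from $L_1L_{K(2)}S^0$ this is an equivalence of $L_1L_{K(2)}S^0$-modules, sending the unit to a distinguished generator of $\pi_0L_1X$.

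It then remains to identify $\pi_0L_1X$ and to verify Hurewicz injectivity. By the equivalence just constructed, $\pi_0L_1X\cong\pi_0L_1L_{K(2)}S^0$, and the Mayer--Vietoris sequence of~\eqref{chrm-sq} applied to $L_{K(2)}S^0$, using $\pi_1L_0L_{K(1)}L_{K(2)}S^0=0$, identifies this with $\ker\!\big(\pi_0L_{K(1)}L_{K(2)}S^0\oplus\pi_0L_0L_{K(2)}S^0\to\pi_0L_0L_{K(1)}L_{K(2)}S^0\big)$. Here $\pi_0L_{K(1)}L_{K(2)}S^0\cong\ZZ_3\hookrightarrow\QQ_3$ is the rationalization and $\pi_0L_0L_{K(2)}S^0\to\pi_0L_0L_{K(1)}L_{K(2)}S^0$ is a ring map $\QQ_3\to\QQ_3$, hence the identity; so the kernel is a copy of $\ZZ_3$ generated by the unit, whence $\pi_0L_1X\cong\ZZ_3$, and tracing through the equivalence this generator is the class determined by $f$ (the one mapping to the $f$-units in $\pi_0L_{K(1)}X$ and $\pi_0L_0X$). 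For the Hurewicz map, the composite $\pi_0L_1X\to\pi_0L_1L_{K(2)}(E_2\wedge X)\to\pi_0L_0L_{K(2)}(E_2\wedge X)$ equals $\pi_0L_1X\hookrightarrow\pi_0L_0X\hookrightarrow\pi_0L_0L_{K(2)}(E_2\wedge X)$ by naturality, the first inclusion coming from the fracture square and the kernel description and the second from the rational Hurewicz injectivity of Theorem~\ref{yet-to-be}; hence $\pi_0L_1X\to\pi_0L_1L_{K(2)}(E_2\wedge X)$ is injective. Uniqueness of the equivalence among $L_1L_{K(2)}S^0$-module maps hitting the chosen generator follows from $\mathrm{Map}_{L_1L_{K(2)}S^0}(L_1L_{K(2)}S^0,L_1X)\simeq L_1X$.

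I expect the main obstacle to be precisely the bookkeeping in the second paragraph: one must be sure that a single choice of $f$ pins down mutually compatible units in all three corners, so that the map of cospans genuinely commutes, and the identification of those units as preimages of $f(1)$ under the $(E_2)$-Hurewicz maps is what secures this. Everything else is the formal behaviour of the chromatic fracture square under maps of diagrams, together with the homotopy computations already established. As an alternative to the pullback argument, one can first produce the $L_1L_{K(2)}S^0$-module map $\theta\colon L_1L_{K(2)}S^0\to L_1X$ hitting the generator and then observe that $L_{K(1)}\theta$ and $L_0\theta$ are the equivalences of Theorems~\ref{L1}(a) and~\ref{yet-to-be}; the fibre of $\theta$ is then $K(1)_\ast$-acyclic and rationally acyclic, hence $E(1)_\ast$-acyclic, so $\theta$ is an equivalence of $E(1)$-local spectra. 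This variant still rests on the same compatibility.
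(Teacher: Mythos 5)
Your proposal is correct and follows essentially the same route as the paper: the paper's proof is a terse citation of the chromatic fracture square (\ref{chrm-sq}) together with Theorems \ref{yet-to-be} and \ref{L1}, with the vanishing of $\pi_1L_{K(1)}X$ (equivalently, of $\pi_1L_0L_{K(1)}X$, which you invoke) as the one point it flags explicitly. Your second and third paragraphs simply make explicit the compatibility-of-units bookkeeping and the Mayer--Vietoris computation of $\pi_0L_1X\cong\ZZ_3$ that the paper leaves to the reader.
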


\begin{proof} From
Theorem \ref{L1} we have that $\pi_1L_{K(1)}X=0$ for
all $X \in \kappa_2$. The result then follows by the
chromatic fracture square (\ref{chrm-sq}), Theorem \ref{yet-to-be} and
Theorem \ref{L1}.
\end{proof}

\begin{thm}[{\bf Chromatic Splitting}]\label{chrom-split-2} If $n=2$
and $p=3$, then
$$
L_1L_{K(2)}S^0\simeq L_1(S_3^0\vee S_3^{-1})\vee L_0(S_3^{-3}\vee 
S_3^{-4}).
$$
where $S^n_p$ denotes the $p$-complete sphere.
\end{thm}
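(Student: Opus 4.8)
The plan is to assemble the chromatic fracture square $(\ref{chrm-sq})$ for the spectrum $Z = L_{K(2)}S^0$ out of the two computations already carried out in Theorems \ref{ratsphere} and \ref{L1}; this amounts to unpacking the proof of Theorem \ref{chrom-split-1} in the special case $X = Z$. The square $(\ref{chrm-sq})$ exhibits $L_1Z$ as the homotopy pullback of
$$
L_0Z \longra L_0L_{K(1)}Z \longleftarrow L_{K(1)}Z.
$$
By Theorem \ref{ratsphere} there is a splitting $L_0Z \simeq A\vee B$ with $A = L_0(S_3^0\vee S_3^{-1})$ carrying the classes $1$ and $\zeta$ and $B = L_0(S_3^{-3}\vee S_3^{-4})$ carrying $e$ and $\zeta e$. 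By part (b) of Theorem \ref{L1} there is an equivalence $L_{K(1)}Z \simeq L_{K(1)}(S^0\vee S^{-1})$ under which the second summand is $L_{K(1)}$ of the map $\zeta\colon S^{-1}\to Z$. Rationalizing this equivalence and using the standard height-one fact that $L_0L_{K(1)}S^0 \simeq L_0S_3^0\vee L_0S_3^{-1}$ gives $L_0L_{K(1)}Z \simeq L_0L_{K(1)}(S^0\vee S^{-1})$, a rational spectrum whose homotopy is concentrated in degrees $0,-1,-2$.

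The next step is to analyze the map $f\colon L_0Z\to L_0L_{K(1)}Z$, which is the rationalization of the localization $Z\to L_{K(1)}Z$. Both source and target are rational, hence generalized Eilenberg--MacLane spectra, so $f$ is detected on homotopy groups. Since $B$ has homotopy only in degrees $-3$ and $-4$ while the target is concentrated in degrees $\ge -2$, the restriction $f|_B$ is null. The restriction $f|_A$ sends the unit $1\in\pi_0$ to the unit and, by the construction of the $L_{K(1)}S^{-1}$-summand in part (b) of Theorem \ref{L1}, sends $\zeta\in\pi_{-1}$ to the generator of $\pi_{-1}L_0L_{K(1)}S^{-1}$; hence $f|_A$ is homotopic to $L_0$ of the localization map $S_3^0\vee S_3^{-1}\to L_{K(1)}(S_3^0\vee S_3^{-1})$, again because maps between rational spectra are determined by their effect on homotopy.

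It remains to do the pullback bookkeeping. Because $f$ vanishes on the summand $B$, the homotopy pullback defining $L_1Z$ splits off a copy of $B$: in a stable category a fibre sequence whose base map admits a section splits, and here the section is provided by the summand inclusion $A\hookrightarrow A\vee B$. This yields
$$
L_1Z \;\simeq\; B \;\vee\; \bigl(A\times_{L_0L_{K(1)}Z} L_{K(1)}Z\bigr).
$$
By the identification of $f|_A$ above, the residual pullback square has all four corners and both structure maps equal to those of the chromatic fracture square $(\ref{chrm-sq})$ for $S_3^0\vee S_3^{-1}$ (using $L_{K(1)}(S^0\vee S^{-1}) = L_{K(1)}(S_3^0\vee S_3^{-1})$), so $A\times_{L_0L_{K(1)}Z} L_{K(1)}Z \simeq L_1(S_3^0\vee S_3^{-1})$. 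Combining, $L_1L_{K(2)}S^0 \simeq L_1(S_3^0\vee S_3^{-1})\vee L_0(S_3^{-3}\vee S_3^{-4})$, the asserted splitting.

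I expect the real obstacle to be the step analyzing the map $f$: making precise the compatibility of the identification of $L_0Z$ in Theorem \ref{ratsphere} with that of $L_{K(1)}Z$ in part (b) of Theorem \ref{L1}, i.e. verifying that the classes named $1$ and $\zeta$ in the two statements genuinely agree under the localization $Z\to L_{K(1)}Z$, so that the residual pullback is recognizably the one computing $L_1(S_3^0\vee S_3^{-1})$. Everything else is formal manipulation of homotopy pullbacks of rational and $K(1)$-local spectra.
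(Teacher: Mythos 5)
Your argument is correct and follows the same route as the paper: both run the chromatic fracture square for $L_{K(2)}S^0$, feed in Theorem \ref{ratsphere} for the $L_0$-corner and Theorem \ref{L1} for the $L_{K(1)}$-corner, and reduce everything to identifying the gluing map $L_0L_{K(2)}S^0 \to L_0L_{K(1)}L_{K(2)}S^0$ as projection onto $L_0(S_3^0\vee S_3^{-1})$ followed by the canonical map. The only difference is that you spell out the degree argument for the vanishing on the $S_3^{-3}\vee S_3^{-4}$ summand and the formal splitting of the resulting pullback, details the paper leaves implicit.
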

  
\begin{proof}
We use the chromatic square of  (\ref{chrm-sq}). Let $X = L_{K(2)}S^0$.
Theorem \ref{L1} implies
$$
L_0L_{K(1)}X \simeq L_0L_{K(1)}(S^0 \vee S^{-1}).
$$
From Theorem \ref{yet-to-be} we have that
$$
L_0X \simeq L_0(S_3^0 \vee S_3^{-1} \vee S_3^{-3} \vee S_3^{-4}).
$$
Thus we need only show that the map
$$
L_0X \longrightarrow L_0L_{K(1)}X
$$
is equivalent to the composition
$$
L_0(S_3^0 \vee S_3^{-1} \vee S_3^{-3} \vee S_3^{-4})
\longr L_0(S_3^0 \vee S_3^{-1})\longr
L_0L_{K(1)}(S^0 \vee S^{-1})
$$
where the first map is projection and the second map is the $L_0$ localization
of the canonical map $S_3^0 \vee S_3^{-1} \to L_{K(1)}(S^0 \vee S^{-1})$.
This follows from Theorem \ref{ratsphere} and Theorem \ref{L1}.b.
\end{proof} 

\bibliographystyle{amsplain}
\bigbreak

\end{document}